\newcommand{\eps}{\varepsilon}
\newcommand{\La}{\Lambda}
\def\BR{{\mathbb R}}
\def\BN{{\mathbb N}}
\def\BZ{{\mathbb Z}}
\def\BC{{\mathbb C}}
\def\CO{{\mathcal O}}
\def\D{{\mathcal D}}
\def\it{\textit}
\def\iff{\Leftrightarrow}
\newcommand{\cyc}[1]{\langle #1 \rangle}
\newcommand{\ip}[2]{\langle#1,#2\rangle}
\newcommand{\Ord}[1]{\left|\, #1 \right|}
\DeclareMathOperator{\image}{Im}  %Image of a function run over its domain
\DeclareMathOperator{\Img}{\image}%Imaginary part of a complex number
\DeclareMathOperator{\id}{id}     %identity map
\DeclareMathOperator{\Tr}{Tr}     %Trace
\DeclareMathOperator{\class}{class}%conjugacy class
\DeclareMathOperator{\Aut}{Aut}   %Automorphism group
\newtheorem{thm}{Theorem}[section]
\newtheorem{df}[thm]{Definition}
\newtheorem{ex}{Example}[thm]
\newtheorem{cor}[thm]{Corollary}
\newtheorem{prop}[thm]{Proposition}
\newtheorem{lem}[thm]{Lemma}
\newtheorem{question}{Question}
\numberwithin{equation}{thm}
\def\G{\BZ_k\rtimes_n\BZ_{ql}}
\title[Higher indicators for some groups and their doubles]
{Higher indicators for some groups and their doubles}
\author{Marc Keilberg}
\address{University of Southern California, Los Angeles, CA 90089-1113}
\email{keilberg@usc.edu}
\begin{document}
\begin{abstract}
In this paper we explicitly determine all indicators for groups isomorphic to the semidirect product of two cyclic groups by an automorphism of prime order, as well as the generalized quaternion groups.  We then compute the indicators for the Drinfel'd doubles of these groups.  This first family of groups include the dihedral groups, the non-abelian groups of order $pq$, and the semidihedral groups.  We find that the indicators are all integers, with negative integers being possible in the first family only under certain specific conditions.
\end{abstract}

\maketitle

%---------------------------------Introduction--------------------------------------------------

\section*{Introduction}

It was shown in \cite{LM} that the classical theorem of Frobenius and Schur, giving a
formula to compute the indicator of a simple module $V$ for a finite group $G,$ extends
to any semisimple Hopf algebra $H$. This fact was shown a bit earlier for the special case of Kac
algebras over $\BC$ in \cite{FGSV}. For such an $H$-module $V$, with character $\chi$ and indicator
$\nu(V) = \nu (\chi)$, the only possible values of $\nu (\chi)$ are $0,1$ and $-1$.
Then $\nu(V)\ne 0$ if and only if $V$ is self dual; assuming $V$ is self-dual, then $\nu(V)=+1$
if and only if $V$ admits a non-degenerate $H$-invariant symmetric bilinear form and
$\nu(V)=-1$ if and only if $V$ admits a non-degenerate $H$-invariant skew-symmetric bilinear
form.  However it is not true for Hopf algebras over $\BC$, as it was for the case of a finite group $G$, that $\nu(V)=+1$ if and only if $V$ is defined over $\BR$.

The case when $\nu(V) = +1$ for all simple $G$-modules $V$ has been of particular interest;
such groups are called \it{totally orthogonal} in \cite{GW}. This terminology seems suitable for Hopf
algebras as well, in view of the existence of the bilinear forms described above. In particular it was
known classically that if $G$ is any finite real reflection group, then all $\nu(V) = +1$ \cite{S}; this
includes the case $G = S_n$, as noted above. Some other examples are given in \cite{GW}.

Moreover, indicators of modules over Hopf algebras, in particular their higher analogues we consider herein, are proving to be a very useful invariant in the study of Hopf algebras.  For example, they have been used in classifying Hopf algebras themselves \cite{K} \cite{NS1}; in studying possible dimensions of the representations of H \cite{KSZ1}; and in studying the prime divisors of the exponent of H \cite{KSZ2} \cite{NS2}. Moreover, the indicator is invariant under equivalence of monoidal categories \cite{MaN}. Another motivation comes from conformal field theory; see the work of Bantay \cite{B1} \cite{B2}.  The notion of higher indicators has also been extended to more general categories \cite{NS1} \cite{NS2} \cite{NS3}, where quasi-Hopf algebras play a unifying role \cite{N1} \cite{N2}.  Furthermore, while it is well-known that the higher indicators for modules over groups are integers, it remains an open question if the same can be said for modules over the Drinfel'd double of a group.  Thus it is important to compute the values of the indicator for more examples.  It is the goal of this paper to provide a number of explicit formulas for the indicators of certain Hopf algebras which arise as the Drinfel'd double of group algebras.

The paper is organized in the following fashion.  In Section \ref{prelimsect}, we cover the relevant background material needed for the rest of the paper.  In Section \ref{kqsect} we consider groups of the form $\G$ (see Definition \ref{presentation}), and establish various results about their structure needed in the rest of the paper.  In Section \ref{kqrepsect} we then determine the representation theory of $\G$, and use it to calculate the higher indicators for its irreducible modules.  The indicators are contained in Theorems \ref{kq1dim} and \ref{kqnonlinear}.  We then move on to the indicators of $\D(\G)$, the Drinfel'd double of $\G$, in Section \ref{kqdoubsect}.  The indicators are contained in Theorems \ref{kqdouba} and \ref{kqdoubb}.   In the double, some interesting changes of behavior are observed, such as those detailed in Corollary \ref{negind}.  We then apply the results to give a variety of examples in Section \ref{examplesect}.  In Sections \ref{quatsect}-\ref{quatindsect}, we perform a similar set of calculations to get the indicators for the generalized quaternion groups and their doubles.  We conclude with a few questions that naturally arise.

Throughout, unless otherwise specified, $H$ will be a finite dimensional semisimple Hopf algebra
over the complex numbers $\BC$, with comultiplication $\Delta:H \to H \otimes H$ given by $\Delta(h)=\sum
h_1\otimes h_2$, counit $\epsilon:H\to \BC$ and antipode $S:H\to H$.
In particular we know from \cite{LR2} that $H$ is cosemisimple and
from \cite{LR1} that $S^2=id.$  By Maschke's theorem there exists a
unique integral $\Lambda \in \int_H$ with $\epsilon (\Lambda)=1$.  Since $\Delta$ is coassociative, we may define $\Delta^2(h) = (\Delta\otimes \id)\circ\Delta(h) = (\id\otimes\Delta)\circ\Delta(h) = \sum h_1\otimes h_2\otimes h_3$ and inductively define $\Delta^n(h) = (\Delta\otimes \id)\circ\Delta^{n-1}(h) = \sum h_1\otimes \cdots\otimes h_{n+1}$ for $n\geq 2$.  We will use $\wedge$ to denote logical conjunction, $\vee$ for logical disjunction, and $\oplus$ for exclusive disjunction in logical propositions.

%-------------------Section 2. Preliminaries---------------------

\section{Preliminaries}\label{prelimsect}
\begin{df}\label{indicdef}
Given a simple $H$-module $V$ and its character $\chi$, we define the functions
$$\La^{[m]} = \sum \Lambda _1\Lambda _2\cdots\Lambda_m$$
$$\nu_m (\chi)=\chi \left( \La^{[m]}\right), m\in\BN$$
where  $\Lambda$ is the unique integral in $H$ with $\eps(\Lambda) = 1$ from the introduction.
\end{df}
The next result shows that the function $\nu_2(\chi)$ agrees with the
description of the Frobenius-Schur indicator given in the introduction:

\begin{thm} \label{LM} \cite{LM}
Let $H$ be a semisimple Hopf algebra over an algebraically closed
field $k$ of characteristic not 2. Then for $\Lambda$ and $\nu_2$ as
above, the following properties hold:
\begin{enumerate}
\item
$\nu_2 (\chi)=0,1$ or $-1$, for all simple characters $\chi.$
\item
$\nu_2 (\chi) \neq 0$ if and only if $V_{\chi} \cong V_{\chi} ^*$,
where $V_{\chi}$ is the module associated to the character.
Moreover, $\nu_2 (\chi)=1$ (respectively $-1$) if and only if $V_{\chi
}$ admits a symmetric  (respectively skew-symmetric) nondegenerate
bilinear $H$-invariant form.
\item
Considering $S\in End(H)$, $Tr(S)=\sum_{\chi} \nu_2 (\chi) \chi (1).$
\end{enumerate}
\end{thm}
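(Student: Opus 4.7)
The plan is to follow the Frobenius--Schur strategy as adapted to Hopf algebras in \cite{LM}. The guiding idea is to interpret $\nu_2(\chi)=\chi(\La^{[2]})$ as the signed dimension of $H$-invariant symmetric minus skew-symmetric bilinear forms on $V_\chi$, so that Schur's lemma for simples will pin this integer down to $\{0,\pm 1\}$ with the geometric meaning demanded by part (2).

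First I would identify the space of $H$-invariant bilinear forms on $V=V_\chi$ with $\Hom_H(V,V^*)$, and note that, since $V$ and $V^*$ are both simple, Schur's lemma gives $\dim \Hom_H(V,V^*)=1$ when $V\cong V^*$ and $0$ otherwise. The integral $\Lambda$ provides the standard projection $f\mapsto \Lambda\cdot f$ onto this space. When $V\cong V^*$, the unique (up to scalar) intertwiner $\phi\colon V\to V^*$ has a transpose $\phi^T$ defined via the canonical $V\cong V^{**}$, which is available because $S^2=\id$ by Larson--Radford. By Schur's lemma again $\phi^T=c\phi$ for a scalar $c$, and since $(\phi^T)^T=\phi$ we get $c=\pm 1$; thus the associated bilinear form is either symmetric or skew-symmetric, establishing the dichotomy in part (2) conditional on the numerical identification below.

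The central computation is then to show that $\chi(\La^{[2]})$ equals $c$ when $V\cong V^*$ and vanishes otherwise. A basis calculation identifies $\chi(\sum\Lambda_1\Lambda_2)$ with the trace on $V\otimes V$ of the composite $\Lambda\circ\tau$ of the swap $\tau$ with the action of $\Lambda$ (via $\Delta(\Lambda)=\sum\Lambda_1\otimes\Lambda_2$). Since $\Lambda$ acts as a projection onto $(V\otimes V)^H$, the trace collapses to a calculation on the at-most one-dimensional invariant subspace, where it must return either $c=\pm 1$ or $0$ by the analysis of the transpose involution. This yields parts (1) and (2). Part (3) then follows by decomposing $H\cong\bigoplus_\chi M_{n_\chi}(\BC)$ as algebras: the antipode exchanges the block of $\chi$ with the block of $\chi^*$, so only self-dual characters contribute to $\Tr(S)$, and on each self-dual block a direct matrix-trace computation using the intertwiner $\phi\colon V_\chi\to V_\chi^*$ gives $\Tr(S|_{M_{n_\chi}(\BC)})=c\cdot n_\chi = \nu_2(\chi)\chi(1)$. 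Summation over $\chi$ yields the formula.

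The technical obstacle is the middle step: because $H$ need not be cocommutative, the swap $\tau$ is \emph{not} an $H$-module map on $V\otimes V$, so $S^2V$ and $\wedge^2V$ are not $H$-submodules and one cannot simply read the indicator off as a difference of their invariant dimensions. The proof in \cite{LM} handles this via the auxiliary trace identity above, using $S^2=\id$ to move the swap past the integral projection so that the trace still splits into the $\pm 1$-eigenspace contributions of the transpose involution on $\Hom_H(V,V^*)$. Making this step precise, and checking that nothing is lost in the non-cocommutative setting, is the only real work in the argument.
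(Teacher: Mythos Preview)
The paper does not give its own proof of this theorem: it is stated with the citation \cite{LM} and used as a black box, so there is nothing to compare against. Your sketch is a faithful outline of the Linchenko--Montgomery argument itself---identifying $H$-invariant bilinear forms with $\Hom_H(V,V^*)$, extracting the sign via the transpose involution and Schur's lemma, and realizing $\chi(\La^{[2]})$ as the trace of $\Lambda\circ\tau$ on $V\otimes V$---and you have correctly flagged the one genuine subtlety, namely that $\tau$ is not $H$-linear in the non-cocommutative case, which is exactly where $S^2=\id$ is used.
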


Throughout, the indicator of a simple module is just the indicator of its irreducible
character.  When convenient, we will also use the representation corresponding to the associated character when denoting indicators of simple modules.

\begin{cor}\label{sum} For any semisimple Hopf algebra $H$ over an algebraically closed
field $k$ of characteristic not 2, $Tr(S)=\sum_{\chi} \chi (1)$ $\iff$ $H$ is totally orthogonal;
that is, all Frobenius-Schur indicators are 1.
\end{cor}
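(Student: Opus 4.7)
The plan is to derive this as a direct consequence of part (iii) of Theorem \ref{LM}, which gives the identity $\Tr(S) = \sum_\chi \nu_2(\chi)\chi(1)$, combined with the constraint from part (i) that each $\nu_2(\chi) \in \{-1, 0, 1\}$.

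For the reverse implication, if $H$ is totally orthogonal then $\nu_2(\chi) = 1$ for every simple character $\chi$, and substituting into the formula of Theorem \ref{LM}(iii) immediately gives $\Tr(S) = \sum_\chi \chi(1)$.

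For the forward implication, suppose $\Tr(S) = \sum_\chi \chi(1)$. Subtracting the identity of Theorem \ref{LM}(iii) yields
\[
\sum_\chi \bigl(1 - \nu_2(\chi)\bigr)\chi(1) = 0.
\]
By Theorem \ref{LM}(i) each coefficient $1 - \nu_2(\chi)$ lies in $\{0, 1, 2\}$, and each $\chi(1)$ is a positive integer (being the dimension of a simple module). Every term in the sum is therefore a nonnegative real number, so each must vanish individually, forcing $\nu_2(\chi) = 1$ for all simple characters $\chi$. Hence $H$ is totally orthogonal.

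The argument is essentially a one-line consequence of Theorem \ref{LM}, and there is no real obstacle; the only point requiring a moment of care is observing that the nonnegativity of each summand $(1 - \nu_2(\chi))\chi(1)$ is what lets us conclude from the vanishing of the sum that every indicator equals $+1$ (rather than merely having the $-1$'s cancel against zeros in some weighted fashion).
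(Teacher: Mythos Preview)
Your proof is correct and follows exactly the same approach as the paper: both derive the result directly from Theorem \ref{LM}(iii) together with the constraint $\nu_2(\chi)\in\{-1,0,1\}$ from part (i). The paper merely states that this is clear, while you spell out the nonnegativity argument explicitly.
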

\begin{proof} This is clear from the last part of Theorem \ref{LM}, since the values of $\nu_2 (\chi)$
are 0,1, or -1.
\end{proof}

Note that the definition of $\nu_2$, Theorem \ref{LM}, and Corollary \ref{sum} agree with what is
known for a finite group $L$, that is for a group algebra $H = \BC L$ .

For any finite dimensional Hopf algebra $H$, there is an associated Hopf algebra $\D(H)$ known as the Drinfel'd double of $H$.  We will only need a few facts about this Hopf algebra, which we recall in this section, so we omit the full definition for the sake of brevity.  A definition for, and further details of, this Hopf algebra can be found in \cite{Mo}.  As in \cite{Mo} we will write $h\bowtie f$, $h\in H, f\in H^*$, for a typical element of $\D(H)$.  We will, however need an explicit description for the irreducible modules of $\D(G)$ when $G$ is a finite group.

\begin{prop}\label{MOrho}\cite{AF}\cite{Mo}
    Let $G$ be a finite group.  The isomorphism classes of the irreducible $\D(G)$-modules are the modules $M(\CO,\rho)$ (defined below), where $\CO=\class(s)$ is the conjugacy class of some $s\in G$ and $\rho$ is (an isomorphism class of) an irreducible representation of $C_G(s)$ on a vector space $V$.  If we enumerate $\class(s) = \{t_1, ..., t_N\}$, where by convention we take $t_1=s$, and fix $g_i\in G$ with $t_1^{g_i}=t_i$ for $1\leq i\leq N$, then we can describe the module $M(\CO,\rho)$ in the following fashion:

    As a vector space $M(\CO,\rho) = \bigoplus_{i=1}^N g_i\otimes V$, or $N$ copies of $V$ indexed by the $g_i$.  We denote an element $g_i\otimes v, v\in V$ by $g_i v$.  For the left G-module structure, we define $g.g_iv = g_j(\gamma v)$, where $g_j$ and $\gamma$ are the (necessarily unique) elements with $g g_i= g_j\gamma$ in $G$ and $g_j\in\CO, \gamma\in C_G(s)$.  Here, $\gamma$ acts on $V$ via $\rho$.  For the left $G^{*}$ structure, we specify an equivalent left $G$-comodule structure $\delta$.  Specifically, we take $\delta(g_iv) = t_i\otimes g_iv$.  In particular, $M(\CO,\rho)$ can be graded by the elements of $\CO$.
\end{prop}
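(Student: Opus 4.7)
The plan is to exploit the well-known identification of left $\D(G)$-modules with left-left Yetter-Drinfel'd modules over $\BC G$: namely, a pair consisting of a left $G$-module structure and a left $G$-comodule structure (equivalently, a $G$-grading) satisfying the compatibility $\delta(g.m) = g m_{-1} g^{-1} \otimes g.m_0$, where $\delta(m) = \sum m_{-1} \otimes m_0$. So the task splits into three parts: verifying that the proposed $M(\CO,\rho)$ is such a module, showing it is simple, and showing that every simple $\D(G)$-module arises this way and that $(\CO,\rho)$ is uniquely determined.

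For the first part, I would check directly that $g.g_iv = g_j(\gamma v)$ is well-defined (given the fixed choice of coset representatives $g_i$, the factorization $gg_i = g_j\gamma$ with $g_j\in\{g_1,\dots,g_N\}$ and $\gamma\in C_G(s)$ is unique) and defines a left $G$-action, using the associativity of $G$-multiplication and the fact that $\rho$ is a $C_G(s)$-representation. The comodule structure is trivially a $G$-grading. For the Yetter-Drinfel'd compatibility, one computes $\delta(g.g_iv) = \delta(g_j\gamma v) = t_j \otimes g_j\gamma v$, and separately $gt_ig^{-1} = gg_is g_i^{-1}g^{-1} = g_j\gamma s \gamma^{-1} g_j^{-1} = g_j s g_j^{-1} = t_j$, giving the required compatibility.

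For the second part, any $\D(G)$-submodule $W\subseteq M(\CO,\rho)$ is graded by the comodule structure, so $W = \bigoplus_i g_iW_i$ with $W_i\subseteq V$. The $G$-action forces the $W_i$ to be translates of $W_1$ in the appropriate sense, and $W_1$ is stable under $C_G(s)$ (since for $\gamma\in C_G(s)$ the factorization $\gamma g_1 = g_1\gamma$ shows $\gamma.g_1v = g_1(\gamma v)$), hence $W_1\in\{0,V\}$ by irreducibility of $\rho$. Conversely, if $M$ is any simple $\D(G)$-module, its $G$-grading has support that is $G$-stable under the conjugation action, hence a union of conjugacy classes, and by simplicity a single class $\CO = \class(s)$. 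The graded piece $M_s$ is then a $C_G(s)$-submodule, from which one reconstructs $M\cong M(\CO,M_s)$ by translating using coset representatives; simplicity of $M$ forces simplicity of $M_s$. Distinct choices of $\CO$ give non-isomorphic modules because the grading support is an isomorphism invariant, and distinct $\rho$ for fixed $\CO$ are distinguished by the $C_G(s)$-module structure on the $s$-graded component.

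The main obstacle is really bookkeeping: the coset representatives $g_i$ are auxiliary choices, and one must verify that different choices yield isomorphic modules (via intertwining maps of the form $g_iv \mapsto g_i'(\gamma_i v)$ where $g_i' = g_i \gamma_i^{-1}$ with $\gamma_i \in C_G(s)$) and, more delicately, that the Yetter-Drinfel'd compatibility computation above is robust under these choices. Once this is handled, the classification portion is essentially a form of Clifford theory applied to the subalgebra $\BC G^* \subset \D(G)$, and the standard orbit-stabilizer counting argument pairs off irreducibles with pairs $(\CO,\rho)$.
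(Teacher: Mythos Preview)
The paper does not prove this proposition: it is stated with citations to \cite{AF} and \cite{Mo} and is used as background. Your outline via the equivalence between $\D(G)$-modules and Yetter--Drinfel'd modules over $\BC G$ is the standard argument and is correct; the verification of the compatibility condition, the simplicity argument via the grading, and the Clifford-theoretic reconstruction of an arbitrary simple from its $s$-graded component are all sound. One cosmetic remark: in your well-definedness discussion the condition on $g_j$ should be that it lies in the fixed set of coset representatives $\{g_1,\dots,g_N\}$, not literally in $\CO$ (the paper's statement has the same imprecision), but you handle this correctly in practice.
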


We note that in the above, the specific choices of $s$ (the representative of $\CO$), the isomorphism class representative $\rho$ and corresponding vector space $V$, the enumeration of $\CO$ and the choice of the $g_i$ are not crucial.  They will all naturally yield isomorphic $\D(G)$-modules.  In particular, we are free in the subsequent to fix these choices as suits the situation, and will do so without further comment.

In order to study the indicators of $G$ and $\D(G)$, for $G$ a finite group, we need to introduce a few more pieces of notation.

\begin{df}\label{Gmsetdef}
Let $G$ be a finite group.  For any $x,y\in G$ and $m\in \BN$, define
\begin{eqnarray*}
    G_m(x) &=& \left\{ a\in G : \prod_{j=0}^{m-1} a^{-j}xa^j = 1\right\}\\
    G_m(x,y) &=& \left\{ a\in G : \prod_{j=0}^{m-1} a^{-j}x a^j = 1 \mbox{ and } x^m=y\right\}\\
    z_m(x,y) &=& |G_m(x,y)|.
\end{eqnarray*}
\end{df}
In the notation of \cite{KSZ2}, taking $F=G$ our $G_m(x,y)$ and $z_m(x,y)$ are precisely $G_{m,1}(x,y)$ and $z_{m,1}(x,y)$, hence our choice of notation.  In the notation of \cite{JM}, again taking $F=G$, our sets $G_2(x)$ are precisely the sets $F_{x,x^{-1}}$.

\begin{lem}\label{integrals}
For any finite group $G$, let $p_1$ denote the element of $(kG)^*$ dual to the identity of $G$.  Then
\begin{enumerate}
  \item The unique integral $t$ of $G$ with $\epsilon(t)=1$ is
  $$t = \frac{1}{|G|}\sum_{g\in G}g.$$
  \item The unique integral $T$ of $(kG)^*$ with $\epsilon(T)=1$ is $T = p_1$.
  \item \cite[Thm. 10.3.13]{Mo} The unique integral $\Lambda$ for $\D(G)$ such that $\eps(\Lambda)=1$ is $\Lambda=T \bowtie t$.
\end{enumerate}
\end{lem}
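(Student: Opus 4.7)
The plan is to verify the three parts essentially by direct computation, since all three are standard computations in the theory of Hopf algebras and part (iii) is attributed to Montgomery.

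For part (i), I would start from the definition: a left integral $t\in kG$ satisfies $h\cdot t=\eps(h)t$ for every $h\in kG$, and in the group algebra $\eps(g)=1$ for every $g\in G$. Writing $t=\sum_{g}a_g g$ and applying an arbitrary group element $h$, the relation $h\cdot t = t$ forces $a_{hg}=a_g$ for all $h,g$, so all coefficients are equal. Then the normalization $\eps(t)=\sum_g a_g=1$ pins down $a_g=1/|G|$, giving the stated formula. One checks it is simultaneously a right integral, so the uniqueness claim follows as usual.

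For part (ii), I would work with the dual Hopf algebra $(kG)^*$, whose multiplication is dual to the comultiplication $\Delta(g)=g\otimes g$ of $kG$, and whose counit is evaluation at $1_G$. The dual basis $\{p_g\}$ then satisfies $p_g p_h=\delta_{g,h}p_g$, so the idempotents $p_g$ are orthogonal primitive. To verify that $p_1$ is the integral, take any $f\in (kG)^*$ and compute $(fp_1)(g)=f(g)p_1(g)$, which vanishes unless $g=1$ and equals $f(1)$ when $g=1$; hence $fp_1=f(1)p_1=\eps(f)p_1$. Since $\eps(p_1)=p_1(1)=1$, the normalization condition holds, and uniqueness again follows from the fact that the space of integrals is one-dimensional.

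For part (iii), I would cite \cite[Thm.~10.3.13]{Mo} directly, as the excerpt already does. To make the statement self-contained for the reader, I would indicate how it fits together: under the identification of $\D(G)$ with $(kG)^*\otimes kG$ as a coalgebra via $T\bowtie t$, the element $T\bowtie t$ has counit $\eps(T)\eps(t)=1$, and Montgomery's theorem gives that this tensor product of integrals is an integral for $\D(G)$. No genuine obstacle appears here; the only mild subtlety is keeping track of left versus right integrals and the normalization convention, but since $kG$ and $(kG)^*$ are both unimodular (their left and right integrals coincide), this is automatic.
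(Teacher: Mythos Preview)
Your proposal is correct and takes essentially the same approach as the paper: the paper's own proof simply states that parts (i) and (ii) are ``easily checked and well-known'' and defers part (iii) to the cited reference \cite[Thm.~10.3.13]{Mo}, noting it is a special case of a result of Radford. Your write-up is in fact more detailed than the paper's, spelling out the verifications the paper leaves implicit.
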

\begin{proof}
  The first two are easily checked and well-known.  For the proof of the third, see the given reference (which gives a more general result due to Radford).
\end{proof}

By Definition \ref{indicdef}, we now need some formulas for the the quantity
$$\Lambda^{[m]} = \sum \La_1\La_2\cdots\La_m,$$
if we wish to find the indicators for $\D(G)$.

\begin{thm}\label{sweedpow} \cite{KSZ2}
Let $G$ be a finite group and let $\La$ be integral of $\D(G)$ in Lemma \ref{integrals}.iii.  Then
\begin{eqnarray*}
    \La^{[m]} &=& \frac{1}{|G|} \sum_{g,y\in G} z_m(g,y) p_g\bowtie y\\
    &=& \frac{1}{|G|} \sum_{g\in G, \ a\in G_m(g) } p_g\bowtie a^m
\end{eqnarray*}
\end{thm}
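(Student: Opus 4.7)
The plan is to compute $\La^{[m]}$ as the image of $\La$ under iterated comultiplication followed by iterated multiplication. By Lemma \ref{integrals},
$$\La=p_1\bowtie t=\frac{1}{|G|}\sum_{h\in G}p_1\bowtie h,$$
so the task reduces to computing $\sum(p_1\bowtie h)_1(p_1\bowtie h)_2\cdots(p_1\bowtie h)_m$ for each fixed $h\in G$ and then averaging over $h$.

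First I would note that on group-likes $h\in kG$ we have $\Delta(h)=h\ot h$, so the $\D(G)$-comultiplication simplifies to $\Delta(p_g\bowtie h)=\sum_{xy=g}(p_x\bowtie h)\ot(p_y\bowtie h)$. Iterating $m-1$ times yields
$$\Delta^{m-1}(p_1\bowtie h)=\sum_{g_1 g_2\cdots g_m=1}(p_{g_1}\bowtie h)\ot\cdots\ot(p_{g_m}\bowtie h).$$
Next I would apply the $\D(G)$-multiplication rule $(p_a\bowtie h)(p_b\bowtie h')=\delta_{a,\,hbh^{-1}}\,p_a\bowtie hh'$ (which follows from the general bicrossed product formula once one uses that $h$ is group-like so $h\rhup p_b\lhup h^{-1}=p_{hbh^{-1}}$) and induct on $m$ to show that $(p_{g_1}\bowtie h)(p_{g_2}\bowtie h)\cdots(p_{g_m}\bowtie h)$ vanishes unless $g_{j+1}=h^{-j}g_1 h^j$ for all $0\le j\le m-1$, in which case it equals $p_{g_1}\bowtie h^m$.

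Combining the two computations, the comultiplication constraint $g_1 g_2\cdots g_m=1$ together with the conjugation relations $g_{j+1}=h^{-j}g_1 h^j$ collapse to the single equation $\prod_{j=0}^{m-1}h^{-j}g_1 h^j=1$, which is precisely the defining condition for $h\in G_m(g_1)$ of Definition \ref{Gmsetdef}. Relabeling $g_1\mapsto g$ and $h\mapsto a$ then produces the second displayed formula
$$\La^{[m]}=\frac{1}{|G|}\sum_{g\in G,\ a\in G_m(g)}p_g\bowtie a^m,$$
and the first formula follows by grouping terms according to the value $y:=a^m$, since the resulting coefficient of $p_g\bowtie y$ is $|\{a\in G_m(g):a^m=y\}|=z_m(g,y)$.

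The main obstacle is the inductive verification of the product formula: the bookkeeping must correctly track how the accumulated exponent $h^j$ on the right conjugates each incoming factor $p_{g_{j+1}}$ as it is multiplied in. Once that pattern is in hand, recognizing the telescoped constraint as the condition $a\in G_m(g)$ is immediate, and extracting both formulations is a matter of reindexing.
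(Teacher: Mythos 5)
Your computation is correct, and it is essentially the standard argument for this result: the paper itself offers no proof of Theorem \ref{sweedpow} (it is quoted from \cite{KSZ2}), and your route — write $\La=\frac{1}{|G|}\sum_h p_1\bowtie h$, iterate the coproduct using that $h$ is group-like, multiply out using $(p_a\bowtie h)(p_b\bowtie h')=\delta_{a,hbh^{-1}}\,p_a\bowtie hh'$, and recognize the surviving constraint as $h\in G_m(g)$ — is exactly how the cited result is obtained. Two small remarks. First, your coproduct $\Delta(p_g\bowtie h)=\sum_{xy=g}(p_x\bowtie h)\otimes(p_y\bowtie h)$ is the flip of the one coming from Montgomery's convention $\D(kG)=((kG)^*)^{\mathrm{cop}}\bowtie kG$, which the paper nominally follows; with the flipped (cop) convention the constraint comes out as the reversed product, i.e.\ $(ga)^m=a^m$ instead of $\prod_{j=0}^{m-1}a^{-j}ga^j=1$. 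This is harmless: the multiplication rule you use is Montgomery's, your pair of structure maps is the other standard presentation of $\D(G)$, and the two conventions produce the same element $\La^{[m]}$, since the injection $a\mapsto ga^{-1}$ of $G_m(g)$ into itself (using that $a^m$ centralizes $g$ whenever $a\in G_m(g)$) shows $z_m(g,y)=z_m(g,y^{-1})$; still, it would be worth a sentence acknowledging the convention you fix. Second, in the final regrouping you read the condition in Definition \ref{Gmsetdef} as ``$a^m=y$'' rather than the printed ``$x^m=y$''; that is surely the intended definition (it is the one consistent with \cite{KSZ2} and with the first displayed formula), so your step is fine, but you are silently correcting a typo and might say so.
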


\begin{cor}\label{indicforms} \cite{KSZ2}
Let $V=M(\class(s),\rho)$ be an irreducible $\D(G)$-module, as defined in Proposition \ref{MOrho}.  In particular, assume that $\rho$ is an irreducible representation of $C_G(s)$.  Let $\chi$ be the character of $V$, and $\eta$ the character of $\rho$.  Then
    \begin{eqnarray*}
      \nu_m(\chi) &=& \frac{1}{|C_G(s)|} \sum_{y\in C_G(s)} z_m(s,y)\eta(y)\\
      &=& \frac{1}{|G|} \sum_{g\in G, \ a\in G_m(g) } \chi(p_g\bowtie a^m)
    \end{eqnarray*}
\end{cor}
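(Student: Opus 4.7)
The plan is to invoke Definition \ref{indicdef} to rewrite $\nu_m(\chi)=\chi(\La^{[m]})$, and then substitute each of the two formulas for $\La^{[m]}$ from Theorem \ref{sweedpow}. Applying $\chi$ termwise to the second form
\[
\La^{[m]} \;=\; \frac{1}{|G|}\sum_{g\in G,\ a\in G_m(g)} p_g\bowtie a^m
\]
yields the second equality of the corollary immediately from linearity. For the first equality, substitution gives instead
\[
\chi(\La^{[m]}) \;=\; \frac{1}{|G|}\sum_{g,y\in G} z_m(g,y)\,\chi(p_g\bowtie y),
\]
and the task is to reduce this to a sum indexed by $C_G(s)$.

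The main ingredient is the character value $\chi(p_g\bowtie y)$ on the canonical basis. Using Proposition \ref{MOrho}, I would fix a basis $\{w_k\}$ of $V$ and take the induced basis $\{g_iw_k\}$ of $M(\CO,\rho)$. The action of $y$ produces $y\cdot g_iw_k = g_j(\gamma w_k)$, where $yg_i=g_j\gamma$ is the unique decomposition with $g_j$ a chosen coset representative and $\gamma\in C_G(s)$; then $p_g$ projects onto the summand of $G$-comodule grading $g$, which from $\delta(g_jw)=t_j\otimes g_jw$ is the $t_j$-graded part. The diagonal entries on $g_iw_k$ are thus nonzero only when $g=t_j$ and $g_j=g_i$; the second condition is equivalent to $y\in g_iC_G(s)g_i^{-1}=C_G(t_i)$, in which case $\gamma=g_i^{-1}yg_i$. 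Summing the diagonal contributions over $k$ gives
\[
\chi(p_g\bowtie y) \;=\; \sum_{i\,:\,g=t_i,\ y\in C_G(t_i)} \eta(g_i^{-1}yg_i).
\]

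Substituting back and changing variables $\tilde y = g_i^{-1}yg_i$ (which bijects $C_G(t_i)$ with $C_G(s)$), the identity $z_m(t_i,y)=z_m(s,\tilde y)$ follows from direct conjugation invariance of the relations defining $G_m(x,y)$: replace $a$ by $g_i^{-1}ag_i$ in the defining product. The resulting inner sum is then independent of $i$, and with $N=|G|/|C_G(s)|$ values of $i$, the prefactor $N/|G|$ collapses to $1/|C_G(s)|$, producing the first formula. I expect the main obstacle to be the character computation: one has to juggle the module decomposition $yg_i=g_j\gamma$ and the comodule grading simultaneously, being careful about how $\D(G)$ orders its $G$- and $G^*$-actions on the tensor basis. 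Once that is pinned down, the conjugation bookkeeping and the $N/|G|$ simplification are mechanical.
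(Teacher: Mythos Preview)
Your argument is correct. The second equality is indeed immediate from the second expression in Theorem \ref{sweedpow}, and your derivation of the first equality---computing $\chi(p_g\bowtie y)$ on the basis $\{g_iw_k\}$ via the decomposition $yg_i=g_j\gamma$, then using the conjugation invariance $z_m(t_i,y)=z_m(s,g_i^{-1}yg_i)$ to collapse the sum over the orbit to $|\mathcal O|/|G|=1/|C_G(s)|$ times a sum over $C_G(s)$---is exactly the intended mechanism. Your character formula $\chi(p_g\bowtie y)=\eta(g_i^{-1}yg_i)$ when $g=t_i$ and $y\in C_G(t_i)$ (and $0$ otherwise) is precisely what the paper records as Lemma \ref{0contrib}, so you have in fact reproved that lemma along the way.

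Note, however, that the paper itself gives no proof here: the corollary is quoted from \cite{KSZ2}, so there is nothing in the paper to compare against beyond the surrounding statements (Theorem \ref{sweedpow} and Lemma \ref{0contrib}), with which your argument is fully consistent. One small caution: Definition \ref{Gmsetdef} as written has ``$x^m=y$'' in the defining condition for $G_m(x,y)$, but the equivalence of the two displays in Theorem \ref{sweedpow} (and your conjugation argument) only makes sense with ``$a^m=y$''; you have tacitly used the corrected version, which is the one in \cite{KSZ2}.
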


Although the first equality is much more compact, for our purposes we will find the second a little bit easier to calculate with, since we can separate the two conditions $a\in G_m(g)$ and $a^m=y$ in the definition of $G_m(g,y)$.  To this end, we explicitly state when it can happen that $\chi(p_g\otimes a^m) \neq 0$.

\begin{lem}\label{0contrib}\cite{KSZ2}
     Suppose $s,g\in G$, $m\in\BN$ and $a\in C_G(g)$, and let $\chi$ and $\eta$ be as in Corollary \ref{indicforms}.  Let $\CO_s=\class(s)$ be the conjugacy class of $s$ in $G$.  For any $y\in\CO_s$, let $y'\in G$ be such that $s^{y'}=y$.  If $g\in\CO_s$ and $a^m = \gamma^{g'}$ for some $\gamma\in C_G(s)$, then $\chi(p_g\bowtie a^m)=\eta(\gamma)$.  In all other cases, $\chi(p_g\bowtie a^m)=0$.
\end{lem}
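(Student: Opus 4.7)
The plan is to compute $\chi(p_g\bowtie a^m)$ directly by summing the diagonal contributions to the trace over the decomposition $M(\CO_s,\rho)=\bigoplus_{i=1}^N g_i\otimes V$ supplied by Proposition \ref{MOrho}. The action of $p_g\bowtie a^m$ on a basis vector $g_iv$ is obtained by first applying $a^m$ via the $G$-module structure and then applying $p_g$ via the $G$-comodule (grading) structure. Writing $a^m g_i = g_j\gamma$ with $g_j$ a fixed coset representative and $\gamma\in C_G(s)$, Proposition \ref{MOrho} gives $a^m\cdot g_iv = g_j(\gamma v)$, with comodule grade $t_j = a^m t_i (a^m)^{-1}$, so that applying $p_g$ yields $(p_g\bowtie a^m)\cdot g_iv = \delta_{g,t_j}\, g_j(\gamma v)$.

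Taking the trace, the summand $g_i\otimes V$ contributes only when the image lands back in itself, i.e., $g_j=g_i$, equivalently $t_j=t_i$, equivalently $a^m\in C_G(t_i)$; together with $\delta_{g,t_j}$ this forces $g=t_i\in\CO_s$, while the hypothesis $a\in C_G(g)$ renders the centralizer condition automatic. Exercising the freedom permitted by the paper's conventions to set the coset representative $g_i$ equal to the given $g'$ (both satisfy $s^{g_i}=s^{g'}=g$), the restriction of $p_g\bowtie a^m$ to $g_i\otimes V$ is $v\mapsto \rho(\gamma)v$ for $\gamma := g_i^{-1}a^m g_i$, which lies in $C_G(s)$ and satisfies $a^m=\gamma^{g'}$; the trace of this restriction is $\eta(\gamma)$. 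Since every other summand contributes $0$, we obtain $\chi(p_g\bowtie a^m)=\eta(\gamma)$. In the complementary situation $g\notin\CO_s$, every $t_j$ lies in $\CO_s$, so $\delta_{g,t_j}=0$ for all $i$ and the operator has zero trace. The seemingly additional subcase in which $g\in\CO_s$ but $a^m$ is not of the form $\gamma^{g'}$ is in fact vacuous under $a\in C_G(g)$, because $g_i^{-1}a^m g_i$ is automatically in $C_G(s)$.

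The main point requiring care is administrative: tracking the conjugation convention so that the relation $a^m=\gamma^{g'}$ matches the explicitly computed $\gamma=g_i^{-1}a^m g_i$, and verifying that $\eta(\gamma)$ is independent of the particular choice of $g'$ with $s^{g'}=g$, since different choices alter $\gamma$ only by $C_G(s)$-conjugation, under which the irreducible character $\eta$ is a class function. Beyond this bookkeeping, the argument is a direct reading of Proposition \ref{MOrho}.
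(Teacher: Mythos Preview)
Your proof is correct. The paper does not supply its own proof of this lemma; it is quoted from \cite{KSZ2} and stated without argument. Your direct trace computation over the decomposition $M(\CO_s,\rho)=\bigoplus_i g_i\otimes V$ from Proposition~\ref{MOrho} is exactly the natural verification: the grading forces the only nonzero diagonal block to be the one with $t_i=g$ (so $g\in\CO_s$), the hypothesis $a\in C_G(g)$ guarantees $a^m$ preserves that block, and the resulting operator on $V$ is $\rho(\gamma)$ with $\gamma=g_i^{-1}a^m g_i\in C_G(s)$. Your observation that the subcase ``$g\in\CO_s$ but $a^m\neq\gamma^{g'}$'' is vacuous under the standing hypothesis $a\in C_G(g)$, and your check that $\eta(\gamma)$ is independent of the choice of $g'$ (since two such choices differ by an element of $C_G(s)$, which conjugates $\gamma$ inside $C_G(s)$), tie up the only loose ends. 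This is essentially how one would expect the cited proof in \cite{KSZ2} to proceed.
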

This gives the form of $\nu_m(\chi)$ that we will use in the rest of the paper.

\begin{cor}\label{cor34}  Let $V=M(\class(s),\rho)$ be an irreducible $\D(G)$-module, as defined in Proposition \ref{MOrho}.  In particular, assume that $\rho$ is an irreducible representation of $C_G(s)$.  Let $\chi$ be the character of $V$, and $\eta$ the character of $\rho$.  Then
$$\nu_m(\chi) = \frac{1}{|G|}\sum_{g\in\CO_s, \ a\in G_m(g)} \chi(p_g\bowtie a^m)$$
\end{cor}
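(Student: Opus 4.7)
The plan is to start from the second formula in Corollary \ref{indicforms},
$$\nu_m(\chi) = \frac{1}{|G|}\sum_{g\in G,\ a\in G_m(g)} \chi(p_g\bowtie a^m),$$
and show that every summand with $g\notin\CO_s$ vanishes, so that the sum collapses to the one in the statement.

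The key observation is the grading noted at the end of Proposition \ref{MOrho}: as a left $G$-comodule, $M(\CO_s,\rho)$ decomposes as $\bigoplus_{t\in\CO_s}(g_iv$-part$)$, with $\delta(g_iv)=t_i\otimes g_iv$. Dualizing this comodule structure, the action of $p_g\in(kG)^*$ on $M(\CO_s,\rho)$ is precisely the projection onto the $g$-homogeneous component. Therefore, for any $g\notin\CO_s$, the operator $p_g$ acts as $0$ on $M(\CO_s,\rho)$, and consequently so does $p_g\bowtie a^m$ for every $a\in G$. This gives $\chi(p_g\bowtie a^m)=0$ whenever $g\notin\CO_s$, irrespective of $a$.

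Plugging this vanishing into the formula from Corollary \ref{indicforms} eliminates every $g$ outside of $\CO_s$, yielding exactly
$$\nu_m(\chi) = \frac{1}{|G|}\sum_{g\in\CO_s,\ a\in G_m(g)}\chi(p_g\bowtie a^m),$$
as required. One could alternatively deduce the same vanishing from Lemma \ref{0contrib}, which handles the case $a\in C_G(g)$; however, the grading argument has the mild advantage of not requiring $a\in C_G(g)$, so it applies uniformly to all $a\in G_m(g)$.

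There is no real obstacle here: the corollary is essentially a cosmetic restriction of the sum using the module grading, and the whole argument reduces to the single observation that $p_g$ kills any component not indexed by $g$.
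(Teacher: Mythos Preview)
Your proof is correct and matches the paper's approach: the paper states this corollary without proof, treating it as immediate from Lemma~\ref{0contrib}, and your grading observation is precisely the mechanism underlying the $g\notin\CO_s$ case of that lemma. Your remark that the grading argument sidesteps the $a\in C_G(g)$ hypothesis of Lemma~\ref{0contrib} is a fair refinement, though not a substantive departure.
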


When we have $s\in Z(G)$, it is particularly easy to figure out the values of $\nu_m$.

\begin{prop} \label{FxF} Let $G$ be a finite group and suppose $x\in Z(G)$.  Let $V=M(x,\rho)$ be any corresponding irreducible $\D(G)$-module, as given in Proposition \ref{MOrho}.  Let $\chi$ be the character of $V$ and $\eta$ the character of $\rho$.
\begin{enumerate}
    \item Assume that $x^m=1$. Then the value $\nu_m(\chi)$ is
    exactly the same as the value of $\nu_m$ for $\rho$ (a $G$-module).  As a slight abuse of notation, we write $\nu_m(\chi) = \nu_m(\eta)$.

    \item Assume that $x^m\neq 1$. Then $\nu_m(\chi)=0$
\end{enumerate}
\end{prop}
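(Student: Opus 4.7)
The plan is to apply Corollary \ref{cor34} and use the centrality of $x$ to dramatically simplify both the indexing set and the summand. Since $x \in Z(G)$, the conjugacy class $\CO_s = \class(x) = \{x\}$, and $C_G(x) = G$, so in the description of $M(x, \rho)$ there is only one coset representative and the $\D(G)$-module is just $\rho$ equipped with a trivial grading by $x$.

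First I would rewrite $G_m(x)$. For any $a \in G$ and $0 \leq j \leq m-1$, centrality gives $a^{-j} x a^j = x$, so
\[
\prod_{j=0}^{m-1} a^{-j} x a^j = x^m.
\]
Therefore $G_m(x) = G$ when $x^m = 1$ and $G_m(x) = \emptyset$ otherwise. Combined with $\CO_s = \{x\}$, Corollary \ref{cor34} becomes
\[
\nu_m(\chi) = \frac{1}{|G|} \sum_{a \in G_m(x)} \chi(p_x \bowtie a^m).
\]
Case (ii) is then immediate: the indexing set is empty, so $\nu_m(\chi) = 0$.

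For case (i), I would apply Lemma \ref{0contrib} with $s = g = x$ and $g' = 1$ (which is valid since $s^1 = x = g$). For any $a \in G = C_G(x)$ we have $a^m \in G = C_G(s)$, so we may take $\gamma = a^m$ and Lemma \ref{0contrib} yields $\chi(p_x \bowtie a^m) = \eta(a^m)$. Thus
\[
\nu_m(\chi) = \frac{1}{|G|} \sum_{a \in G} \eta(a^m).
\]

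Finally, I would identify this expression with $\nu_m(\eta)$ for $\rho$ viewed as a $\BC G$-module. Using Lemma \ref{integrals}.i, the integral is $t = \frac{1}{|G|} \sum_{g \in G} g$, and since $\Delta(g) = g \otimes g$ in $\BC G$ we get $t^{[m]} = \frac{1}{|G|} \sum_{g \in G} g^m$. Hence $\nu_m(\eta) = \eta(t^{[m]}) = \frac{1}{|G|} \sum_{a \in G} \eta(a^m)$, matching the expression above and completing (i). There is no real obstacle here; the only subtle point is verifying the hypotheses of Lemma \ref{0contrib} with the correct choice of $g'$, which centrality makes trivial.
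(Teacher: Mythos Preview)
Your proof is correct and follows essentially the same approach as the paper: compute $G_m(x)$ using centrality to get $G$ or $\emptyset$ according to whether $x^m=1$, apply Corollary~\ref{cor34}, and reduce $\chi(p_x\bowtie a^m)$ to $\eta(a^m)$. The paper's version is terser---it leaves the justification of $\chi(p_x\bowtie a^m)=\eta(a^m)$ and the identification with $\nu_m(\eta)$ implicit---but the argument is the same.
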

\begin{proof}
Suppose $x^m=1$.  Then by definition we have $G_m(x) = G$.  Therefore
$$\nu_m(\chi) = \frac{1}{|G|} \sum_{a\in G_m(x)} \chi(p_x\otimes a^m) = \frac{1}{|G|} \sum_{a\in G} \eta(a^m) = \nu_m(\eta)$$

Suppose $x^m\neq 1$.  Similarly to before, we immediately conclude that $G_m(x)=\emptyset$.  Therefore $\nu_m(\chi)=0$.
\end{proof}

Note that as long as $G$ contains central elements whose order does not divide $m$, some given $m\in\BN$, then $\D(G)$ admits irreducible modules $V$ with $\nu_m(V)=0$.  In particular, if $Z(G)$ contains elements with order not dividing $2$, then $\D(G)$ admits an irreducible module $V$ with $\nu_2(V)=0$.

This completes all of the preliminary results we wished to establish.  We will now proceed to consider certain families of finite groups $G$.

%------------------- Section 2: Structure of Semi-direct products of Z_k by Z_q -------------------------

\section{Groups of the form $\G$}\label{kqsect}
Our goal for the rest of the paper will be to analyze the indicators for certain classes of groups and their doubles.

\begin{df}\label{presentation}
Let $k,q,l,n\in\BN$ with $q$ a prime that divides $\Ord{\Aut(\BZ_k)}$ and $n^q\equiv 1\bmod k$ but $n\not\equiv 1\bmod k$.  For any such quadruple, we consider the non-abelian group $G=\BZ_k\rtimes_n\BZ_{ql}$ given by
\begin{eqnarray*}
G = \cyc{a, b \mid a^k=b^{ql}=1, bab^{-1} = a^n}=\G.
\end{eqnarray*}
\end{df}
When $l=1$, these groups include the dihedral groups (Section \ref{dnexample}), the non-abelian groups of order $pq$ (Section \ref{pqexample}), and the semidihedral groups (Section \ref{semiex}) amongst others.  In Section \ref{quatsect} we will consider the generalized quaternion groups, which have a similar presentation.  Our assumption that $n$ denotes a prime-order automorphism is ultimately essentially for the calculations we will perform.  Any time we wish to compute indicators involving these groups, we will be lead to consider elements of the form $(a^s b^t)^m$, and the exact nature of what these are becomes hard to predict succinctly when the order of $n$ is not prime.  See, in particular, the proofs of Theorems \ref{kqnonlinear}, \ref{kqdouba}, and \ref{kqdoubb}.

To analyze the indicators for $\G$ and its double, we will need a number of basic facts regarding the parameters $k,q,n,l$ and the structure of $\G$ itself.  We start by introducing some constants depending on $\G$ that will appear throughout our calculations.

\begin{lem}\label{minq}
Let $k,q,n,l$ be as Definition \ref{presentation}.  Define
    \begin{eqnarray}\label{cvalue}
        c&=&\gcd(n-1,k),\\\label{dvalue}
        d&=&\sum_{l=0}^{q-1}n^l = \frac{n^q-1}{n-1}.
    \end{eqnarray}
Then:
    \begin{enumerate}
      \item $n^h\equiv 1\bmod k$ $\iff$ $q\mid h$.
      \item $c\mid \gcd(n^j-1,k)$ for every $j$.
      \item $d \equiv 0 \bmod \frac{k}{c}$.
      \item $d\equiv q\bmod c$.
    \end{enumerate}
\end{lem}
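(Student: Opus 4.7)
The plan is to deduce each of the four claims by elementary manipulations with the defining relations, exploiting the primality of $q$ and the standard factorization $n^j - 1 = (n-1)(1 + n + \cdots + n^{j-1})$. Nothing here should require deep machinery; the task is to organize the reductions so that each successive part falls out of the previous observations.

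For part (i), I would argue via the order of $n$ in the unit group $(\BZ/k\BZ)^*$. By hypothesis $n^q \equiv 1 \bmod k$ but $n \not\equiv 1 \bmod k$, so $\ord(n)$ divides $q$ and is strictly greater than $1$. Since $q$ is prime, this forces $\ord(n) = q$, and then $n^h \equiv 1 \bmod k$ iff $q \mid h$ by the usual characterization of the annihilator of a cyclic group element.

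For part (ii), the factorization $n^j - 1 = (n-1)(n^{j-1} + n^{j-2} + \cdots + 1)$ shows that $n-1 \mid n^j - 1$ for every $j \geq 1$ (and trivially for $j=0$). Since $c = \gcd(n-1, k)$ divides both $n-1$ and $k$, it divides $n^j - 1$ and $k$, hence divides their gcd.

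For part (iii), I would start from the identity $d(n-1) = n^q - 1$, which is divisible by $k$ by hypothesis. Writing $k = c \cdot (k/c)$ and $n-1 = c \cdot ((n-1)/c)$, this becomes $k/c \mid d \cdot (n-1)/c$. The definition of $c$ as $\gcd(n-1, k)$ makes $(n-1)/c$ and $k/c$ coprime, so the factor $(n-1)/c$ can be cancelled and $k/c \mid d$ follows. This coprimality step is the one detail that deserves care; everything else in this part is mechanical. For part (iv), reducing $d = 1 + n + n^2 + \cdots + n^{q-1}$ modulo $c$ is immediate because $c \mid n-1$ gives $n \equiv 1 \bmod c$, hence every power $n^j \equiv 1 \bmod c$, and the sum of $q$ ones is $q$.

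I do not expect any serious obstacle: the lemma is bookkeeping setting up the constants used in later sections. The only subtle point is the coprimality argument in (iii), where one must be careful not to cancel a factor without first isolating it modulo the relevant divisor.
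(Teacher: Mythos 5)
Your proposal is correct and follows essentially the same elementary route as the paper: (i) uses the primality of $q$ to pin down the order of $n$ modulo $k$ (the paper phrases this via a B\'ezout identity, which amounts to the same thing), (ii) uses the factorization $n^j-1=(n-1)\sum_{l=0}^{j-1}n^l$, (iii) cancels the coprime factor $(n-1)/c$ from $k\mid d(n-1)$ (the paper says the same thing in annihilator language), and (iv) reduces $d$ modulo $c$ using $n\equiv 1\bmod c$. No gaps.
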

\begin{proof}
    \begin{enumerate}
        \item This is by definition.  Explicitly, suppose to the contrary that $q\nmid h$.  Then $\gcd(q,h)=1$ since $q$ is a prime.  Therefore, there are $u,v\in\BZ$ with $uq+vh = 1$.  Thus, since $n\in\BZ_k^*$ by assumption,
            $$n\equiv n^{uq+vh}\equiv (n^q)^u (n^h)^u\equiv 1\bmod k,$$
            a contradiction.

        \item This follows immediately from the definition of $c$ and the factorization $n^j-1 = (n-1)\sum_{l=0}^{j-1}n^l$.

        \item We know that
            $$(n-1)\sum_{l=0}^{q-1}n^l \equiv n^q -1 \equiv 0\bmod k.$$
            Since $c=\gcd(k,n-1)$, it follows that $\sum_{l=0}^{q-1}n^l$ is in the annihilator of $c$ (viewing $\BZ_k$ as a $\BZ$-module in the usual way), hence is congruent to a multiple of $k/c$ modulo $k$.  Equivalently, $\sum_{l=0}^{q-1}n^l\equiv 0\bmod \frac{k}{c}$ as claimed.

        \item By definition of $d$, we have
            $$d-q \equiv \sum_{j=0}^{q-1}n^j - q = \sum_{j=1}^{q-1}(n^j-1) \bmod k.$$

            By the previous part, $c\mid n^j-1$ for $1\leq j\leq q-1$.  Thus every term in the above summation is a multiple of $c$.  Since $c\mid k$ by definition, $d-q$ is a multiple of $c$, which gives the claim.
    \end{enumerate}
\end{proof}

It should be noted that (iii) is the best possible zero congruence we can get, in the sense that $d\not\equiv 0\bmod k$ in general.  For example, taking $k=9$, $q=3$, and $n=7$, one finds that $d=1+7+49\equiv 3\bmod 9$.  Indeed, neither is it true that $\sum_{l=0}^{q-1}n^l\equiv k/c\bmod k$ in general.  An example of this is given by the even dihedral groups: $k\in 2\BN$, $q=2$, $n=-1$.  It should also be noted that distinct values of $n$ need not yield the same values for $c$ or $d$ if the resulting groups are not isomorphic. However, when $c=1$, we obviously have $d\equiv 0\bmod k$ by (iii).

We will see in Lemma \ref{kqcent} that $c\cdot l=|Z(G)|$, from which a stronger version of $(ii)$ follows.  The value $d$ will play an important roll in the determination of the indicators for many of the irreducible modules over $G$ and $\D(G)$ (see Theorems \ref{kqnonlinear}, \ref{kqdouba}, and \ref{kqdoubb}).

We next state a few simple identities that readily follow from the presentation of $G$.  We present them and sketch their proofs since a precise understanding of the multiplication will be useful for the calculations to come.
\begin{lem}\label{sdpident}
Let $G=\G$ be a non-abelian group as in Definition \ref{presentation}.  Then the following identities hold:
    \begin{enumerate}
      \item $b a^j b^{-1} = a^{jn}$ for all $j\in\BZ$.
      \item $b a^j = a^{jn} b$ for all $j\in\BZ$.
      \item $b^j a^i = a^{in^j}b^j$, $i,j\in\BZ$
      \item For $i,j,h\in\BZ$
      $$(a^i b^j)^h = a^{i\sum_{r=0}^{h-1}n^{jr}} b^{hj}.$$
    \end{enumerate}
\end{lem}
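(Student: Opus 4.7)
The plan is to prove the four identities in the order stated, with each building on the previous one. Part (i) is the natural extension of the defining relation $bab^{-1}=a^n$ to arbitrary powers; I would prove it first by induction on $|j|$. For $j\geq 0$, one shows $ba^jb^{-1} = (bab^{-1})^j = a^{nj}$ by noting that $ba^jb^{-1} = (bab^{-1})(ba^{j-1}b^{-1})$ and applying the induction hypothesis; the case $j<0$ follows by taking inverses, or alternatively by observing that $ba^{-1}b^{-1} = (bab^{-1})^{-1} = a^{-n}$ and running the same induction. Part (ii) is then immediate: multiply both sides of (i) on the right by $b$.

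For part (iii), I would induct on $|j|$. The case $j=0$ is trivial and $j=1$ is exactly (ii). For the inductive step with $j\geq 1$, write $b^{j+1}a^i = b(b^ja^i) = b(a^{in^j}b^j) = (ba^{in^j})b^j = a^{in^j\cdot n}b\cdot b^j = a^{in^{j+1}}b^{j+1}$, where the third equality uses the inductive hypothesis and the fourth uses (ii) applied with exponent $in^j$. The case $j<0$ follows analogously by rearranging $a^i b^j = b^j a^{in^j}$, or by inducting downward using $b^{-1}a^i$; one uses the fact that $n$ is a unit mod $k$ (from Definition \ref{presentation}), so $n^j$ makes sense for negative $j$.

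Part (iv) is again by induction on $h$, this time on $h\geq 0$ (the case $h<0$ follows by inverting, if needed, but the statement is typically applied for $h\geq 0$). The base case $h=0$ gives the empty product on both sides, and $h=1$ is trivial. For the inductive step, compute
\[
(a^ib^j)^{h+1} = (a^ib^j)^h(a^ib^j) = a^{i\sum_{r=0}^{h-1}n^{jr}}b^{hj}\cdot a^i b^j.
\]
Now apply (iii) with exponent $hj$ in place of $j$ to move $b^{hj}$ past $a^i$, producing $a^{in^{hj}}b^{hj}$, and then collect powers of $a$ and $b$ to obtain $a^{i(\sum_{r=0}^{h-1}n^{jr}+n^{jh})}b^{(h+1)j} = a^{i\sum_{r=0}^{h}n^{jr}}b^{(h+1)j}$, which is the desired formula.

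None of the steps present a real obstacle, as this is a standard bookkeeping computation in a semidirect product; the only subtlety to keep track of is that the exponents of $a$ live in $\BZ_k$ (so arithmetic on them is mod $k$) while the exponent of $b$ lives in $\BZ_{ql}$, and each of (ii) through (iv) is really a statement about normal forms in $G$ where the action of $b$ on $\langle a\rangle$ is by the automorphism $a\mapsto a^n$. The main thing to be careful about is ensuring the inductions are set up so that $n^j$ and the geometric sum $\sum_{r=0}^{h-1}n^{jr}$ are interpreted as integers whose residues mod $k$ determine the exponent of $a$.
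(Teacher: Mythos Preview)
Your proposal is correct and follows essentially the same approach as the paper: part (i) from the defining relation, (ii) as an immediate rewrite of (i), (iii) by induction on $j$ using (ii), and (iv) by induction on $h$ using (iii). The only cosmetic difference is that the paper handles negative $j$ in (iii) by first observing $b^{-1}a = a^{n^{-1}}b^{-1}$ via $n^{q-1}\equiv n^{-1}\bmod k$, whereas you suggest inverting or inducting downward; both are fine.
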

\begin{proof}
For (i), we have from the presentation of $G$ that
$$ba^j b^{-1} = (bab^{-1})^j = a^{jn}.$$

(ii) is clearly equivalent to (i).

For (iii), we note that (2) is the case $j=1$.  The case $j\in\BN$ then follows by a straightforward induction.  We need to establish the identity then for $0>j\in\BZ$.  To this end, we note that taking $j= n^{q-1}\equiv n^{-1}\bmod k$, we have that $b^{-1}a = a^{n^{-1}} b^{-1}$, which is the $j=-1, a=1$ case.  Another straightforward induction completes the proof.

The identity in (iv) is now another simple induction on $h$ using (iii).
\end{proof}

Our next result gives an algebraic interpretation of the constant $c$.

\begin{lem}\label{kqcent} Let $G=\G$ be as in Definition \ref{presentation}.  Let $c$ be defined as in equation (\ref{cvalue}).  Then
$$Z(G) = \cyc{a^{k/c}, b^q} \cong \BZ_c\oplus \BZ_q.$$
In particular, $c=\gcd(n^j-1,k)$ whenever $j\not\equiv 0\bmod q$.
\end{lem}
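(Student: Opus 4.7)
The plan is to characterize $Z(G)$ by writing every element of $G$ uniquely as $a^i b^j$ with $0\leq i<k$ and $0\leq j<ql$ (legitimate by the semidirect-product presentation and Lemma~\ref{sdpident}), and then imposing the conditions that such an element commute with each of the two generators $a$ and $b$.

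Using Lemma~\ref{sdpident}(ii), commutation with $a$ reduces to $a^{i+n^j}=a^{i+1}$, i.e., $n^j\equiv 1\pmod{k}$; by Lemma~\ref{minq}(i) this is equivalent to $q\mid j$. Commutation with $b$ reduces to $a^{in}=a^i$, i.e., $i(n-1)\equiv 0\pmod{k}$, which by the definition of $c=\gcd(n-1,k)$ is equivalent to $(k/c)\mid i$. These two congruence conditions cut out exactly the set
\[
\{a^{(k/c)s}b^{qt}:0\leq s<c,\ 0\leq t<l\} = \cyc{a^{k/c}}\cdot\cyc{b^q},
\]
which is the internal direct product of cyclic subgroups of orders $c$ and $l$ respectively (noting that $b^q$ has order $l$ since $b^{ql}=1$). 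This yields the claimed description of $Z(G)$.

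For the final assertion, fix $j$ with $q\nmid j$ and set $d=\gcd(n^j-1,k)$. Lemma~\ref{minq}(ii) already gives $c\mid d$, so it remains to show $d\mid c$. Put $x=a^{k/d}$; since $d\mid n^j-1$, the integer $(k/d)(n^j-1)$ is a multiple of $k$, and hence Lemma~\ref{sdpident}(iii) yields $b^j x b^{-j}=a^{(k/d)n^j}=x$. Thus $x$ commutes with $b^j$, and it commutes with $b^q$ because $b^q\in Z(G)$ by the first part. Since $q$ is prime and $q\nmid j$, $\gcd(j,q)=1$, and inside the cyclic group $\cyc{b}$ of order $ql$ this forces $\cyc{b^j,b^q}=\cyc{b}$. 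Hence $x$ commutes with $b$, and trivially with $a$, so $x\in Z(G)$. The description of $Z(G)$ from the first part then forces $(k/c)\mid(k/d)$, i.e., $d\mid c$, completing the argument.

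I do not anticipate a serious obstacle: each step reduces to a short calculation from the presentation combined with Lemmas~\ref{sdpident} and~\ref{minq}. The step that most needs care is the final one, where one must upgrade ``$x$ commutes with $b^j$'' to ``$x$ commutes with all of $\cyc{b}$''. This is precisely where the primality of $q$ and the already-established centrality of $b^q$ do the actual work.
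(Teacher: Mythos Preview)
Your argument is correct. The determination of $Z(G)$ via commutation with the generators $a$ and $b$ is essentially the paper's approach, only phrased as two commutator checks rather than a single general conjugation formula; the arithmetic is the same.

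For the last assertion, however, your route differs from the paper's. The paper simply observes that replacing $n$ by $n^j$ (with $q\nmid j$) in Definition~\ref{presentation} yields an isomorphic group---since $n^j$ is again an element of order $q$ in $\Aut(\BZ_k)$---so the center computation already performed gives $\gcd(n^j-1,k)=c$ for free. You instead argue directly: setting $d=\gcd(n^j-1,k)$, you exhibit $a^{k/d}$ as a central element by showing it commutes with $b^j$ and with $b^q$, then use $\gcd(j,q)=1$ to conclude $\cyc{b^j,b^q}=\cyc{b}$. Both arguments are short and clean; the paper's is more conceptual (the constant $c$ is visibly an isomorphism invariant), while yours stays entirely within the fixed presentation and makes the role of the primality of $q$ explicit at the element level.
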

\begin{proof}
  For any $i,j,r,s\in\BZ$ we have
  \begin{eqnarray}
    (a^r b^s)a^i b^j (b^{-s} a^{-r}) &=& a^{(1-n^j)r + n^s i}b^j.
  \end{eqnarray}
  Fixing $i,j$ and letting $r,s$ be arbitrary, we see that this latter element is always equal to $a^i b^j$ if and only if
  \begin{eqnarray*}
    (n-1)s\equiv 0\bmod k \mbox{ and } n^j\equiv 1\bmod k.
  \end{eqnarray*}
  Thus we must have $j\equiv 0\bmod q$ by assumptions on $n$, and $s\equiv 0\bmod \frac{k}{c}$ by definition of $c$.  We then conclude that $Z(G) = \cyc{a^{k/c},b^q}\cong \BZ_c\oplus \BZ_l$.

  Finally, by replacing $n$ with $n^j$, $j\not\equiv 0\bmod q$, in the definition of $G$ we get an isomorphic group, from which the final claim follows.
\end{proof}

This Lemma lets us establish the next two results, which help give some algebraic meaning to the constant $d$ in equation \ref{dvalue}.

\begin{cor}\label{dchar}
Let $G=\G$ be as in Definition \ref{presentation}, and define $d$ as in equation \ref{dvalue}.  For any $K\lhd G$ and $x\in G$, let $\overline{x}$ denote the equivalency class of $x$ in $G/K$.  Then the subgroup $\cyc{a^d}$ is the smallest normal subgroup $K$ of $G$ such that $\overline{(a^i b^j)^q}\in\cyc{\overline{b^q}}$ whenever $j\not\equiv 0\bmod q$.
\end{cor}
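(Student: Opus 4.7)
The plan is to compute $(a^i b^j)^q$ explicitly using the semidirect-product identities, rewrite the defining property of $K$ as a single inclusion involving $\cyc{a^d}$ and $\cyc{b^q}$, and then verify that $\cyc{a^d}$ both satisfies this inclusion and is the smallest (normal) subgroup doing so.

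First I apply Lemma \ref{sdpident}(iv) with $h=q$ to get
$$(a^i b^j)^q = a^{i S_j}\,b^{qj}, \qquad S_j = \sum_{r=0}^{q-1} n^{jr}.$$
Since $b^{qj}\in\cyc{b^q}$, the condition $\overline{(a^i b^j)^q}\in\cyc{\overline{b^q}}$ is equivalent to $a^{i S_j}\in K\cdot\cyc{b^q}$. The crux of the argument—and the main obstacle—is to establish $S_j\equiv d\bmod k$ whenever $j\not\equiv 0\bmod q$. The primality of $q$ is essential here: for such $j$, $\gcd(j,q)=1$, so the map $r\mapsto jr$ is a bijection on $\BZ/q$; combined with $n^q\equiv 1\bmod k$ from Lemma \ref{minq}(i), this gives $n^{jr}\equiv n^{jr\bmod q}\bmod k$, and reindexing the sum by $r'=jr\bmod q$ yields $S_j\equiv\sum_{r'=0}^{q-1} n^{r'} = d\bmod k$. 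Thus the defining condition on $K$ reduces, uniformly in $i$ and $j$, to the single inclusion $\cyc{a^d}\subseteq K\cdot\cyc{b^q}$.

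For existence, $\cyc{a^d}$ lies in $\cyc{a}$, which is normal in $G$ by Lemma \ref{sdpident}(i); hence every subgroup of the cyclic group $\cyc{a}$—in particular $\cyc{a^d}$—is normal in $G$, and the inclusion $\cyc{a^d}\subseteq \cyc{a^d}\cdot\cyc{b^q}$ is immediate. For minimality, I use the internal semidirect-product decomposition $G=\cyc{a}\rtimes_n\cyc{b}$, which forces $\cyc{a}\cap\cyc{b^q}=\{1\}$. Consequently, whenever $K\subseteq\cyc{a}$, the intersection $K\cdot\cyc{b^q}\cap\cyc{a}$ collapses to $K$; intersecting both sides of $\cyc{a^d}\subseteq K\cdot\cyc{b^q}$ with $\cyc{a}$ then gives $\cyc{a^d}\subseteq K$. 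This shows no proper subgroup of $\cyc{a^d}$ satisfies the required property, establishing the corollary. Once the key identity $S_j\equiv d\bmod k$ is in hand, the remaining steps are routine consequences of the semidirect-product structure.
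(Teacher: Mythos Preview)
Your proof is correct and follows the paper's approach: compute $(a^ib^j)^q$ via Lemma~\ref{sdpident}(iv), identify the exponent sum as $d$ (you supply the bijection $r\mapsto jr$ on $\BZ/q$ that the paper leaves implicit), and verify that $\langle a^d\rangle$ is normal and satisfies the property. The only notable differences are that you establish normality via $\langle a^d\rangle\leq\langle a\rangle\lhd G$ rather than via centrality ($\langle a^d\rangle\subseteq Z(G)$ from Lemmas~\ref{minq}(iii) and~\ref{kqcent}) as the paper does, and in the minimality step you restrict to $K\subseteq\langle a\rangle$ and conclude that no proper subgroup of $\langle a^d\rangle$ works, whereas the paper asserts directly from the case $i=1$ that $\langle a^d\rangle\subseteq K$ for every normal $K$ with the property.
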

\begin{proof}
  By Lemma \ref{sdpident}.iv, if $j\not\equiv 0\bmod q$, then
  $$(a^i b^j)^q = a^{i\sum_{r=0}^{q-1}n^{jr}} b^{qj} = a^{i\sum_{h=0}^{l-1}n^{h}} b^{qj} = a^{di}b^{qj}.$$
  Taking $i=1$, we see that $\cyc{a^d}\subseteq K$ for any $K\lhd G$ with the desired property.  By Lemmas \ref{minq}.iii and \ref{kqcent}, we have $\cyc{a^d}\subseteq Z(G)$, and thus $\cyc{a^d}\lhd G$.  This establishes all claims.
\end{proof}

\begin{prop}\label{dsplit}
Let $G\cong\G$ be as in Definition \ref{presentation}.  Let $d_G$ be defined as in equation (\ref{dvalue}).  Also define $h=\gcd(d_G,k)$.
\begin{enumerate}
  \item If $\gcd(c,k/c)=1$, then
    $$G\cong \BZ_{c}\oplus\left(\BZ_{k/c}\rtimes_n\BZ_{ql}\right).$$
    If also $q\nmid l$, then we have the additional isomorphism
    $$G\cong Z(G)\oplus\left(\BZ_{k/c}\rtimes_n \BZ_q\right) \cong \BZ_c\oplus \BZ_l \oplus \left(\BZ_{k/c}\rtimes_n \BZ_q\right).$$
    In particular, $Z(\BZ_{k/c}\rtimes_n\BZ_{ql})=\cyc{b^q}$ and $Z(\BZ_{k/c}\rtimes_n \BZ_q)=1$.
  \item
    If $q\nmid \frac{k}{h}$, then $\cyc{a^{d_G}}$ is a retract and direct summand of $G$. In particular,
    $$G\cong \BZ_{k/h}\oplus\left(\BZ_h\rtimes_n\BZ_{ql}\right),$$
    where $H=\BZ_h\rtimes_n\BZ_{ql}$ is non-abelian and has $d_H\equiv d_G\equiv 0\bmod h$.
\end{enumerate}
\end{prop}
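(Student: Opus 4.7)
For part (i), the plan is a two-step application of the Chinese Remainder Theorem. Since $\gcd(c,k/c)=1$, decompose $\BZ_k\cong\BZ_c\oplus\BZ_{k/c}$ via $\langle a\rangle=\langle a^{k/c}\rangle\oplus\langle a^c\rangle$. The factor $\langle a^{k/c}\rangle$ is fixed by conjugation by $b$, because $c\mid n-1$ forces $a^{nk/c}=a^{k/c}$, so the action of $\langle b\rangle$ splits as trivial on $\BZ_c$ and the original action by $n$ on $\BZ_{k/c}$; this yields $G\cong\BZ_c\oplus(\BZ_{k/c}\rtimes_n\BZ_{ql})$. If further $q\nmid l$, apply CRT to $\BZ_{ql}$ to write it as $\langle b^q\rangle\oplus\langle b^l\rangle\cong\BZ_l\oplus\BZ_q$. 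Since $n^q\equiv 1\pmod{k/c}$, the factor $\langle b^q\rangle\cong\BZ_l$ acts trivially and splits off, while $\langle b^l\rangle\cong\BZ_q$ acts by $n^l$, which generates the same order-$q$ subgroup of $\Aut(\BZ_{k/c})$ as $n$ and so, after reparametrization of the $\BZ_q$-generator, gives $\BZ_{k/c}\rtimes_n\BZ_q$. The center identifications then follow from Lemma \ref{kqcent} applied to the smaller semidirect products, since the corresponding constant $c'=\gcd(n-1,k/c)$ divides $\gcd(c,k/c)=1$.

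For part (ii), the approach is to build a retraction $\phi:G\to\langle a^{d_G}\rangle=\langle a^h\rangle$ by setting $\phi(b)=1$ and $\phi(a)=a^x$ for a suitable integer $x$. Compatibility with $bab^{-1}=a^n$ forces $(k/c)\mid x$; being a retraction forces both $h\mid x$ (which absorbs the previous condition by Lemma \ref{minq}.iii) and $x\equiv 1\pmod{k/h}$. By CRT these are simultaneously solvable iff $\gcd(h,k/h)=1$. Once such a $\phi$ exists, since $\langle a^h\rangle$ is central and $\phi|_{\langle a^h\rangle}=\id$, one concludes $G\cong\langle a^h\rangle\oplus\ker\phi$, and a count of orders identifies $\ker\phi=\langle a^{k/h},b\rangle\cong\BZ_h\rtimes_n\BZ_{ql}$.

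The crux, and the main obstacle, is to deduce $\gcd(h,k/h)=1$ from the hypothesis $q\nmid k/h$. I plan a prime-by-prime argument: suppose a prime $p$ divides both $h$ and $k/h$; then $p\neq q$ by hypothesis. If $p\nmid n-1$, then $v_p(n-1)=0$, so $v_p(d_G)=v_p(n^q-1)\geq v_p(k)$ (using $k\mid n^q-1$), giving $v_p(h)=v_p(k)$ and contradicting $v_p(k/h)>0$. If instead $p\mid n-1$, then the lifting-the-exponent lemma for odd $p$, or (for $p=2$) the observation that $q$ must then be odd and $d_G$ is a sum of $q$ odd integers and hence odd, gives $v_p(d_G)=v_p(q)=0$, contradicting $p\mid h$. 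This is where essentially all the arithmetic of the setup is invoked.

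To finish, I must verify that $H=\BZ_h\rtimes_n\BZ_{ql}$ is non-abelian, with $d_H\equiv d_G\equiv 0\pmod h$. The congruences are immediate, since $d_H$ and $d_G$ are reductions of the same integer $\sum_j n^j$ and $h\mid d_G$ by definition. For non-abelianness, argue by contradiction: if $n\equiv 1\pmod h$, then $d_G\equiv q\pmod h$, so $h\mid d_G$ forces $h\in\{1,q\}$. The case $h=1$ forces $c=k$ via $k/c\mid h$, contradicting $n\not\equiv 1\pmod k$. The case $h=q$, together with $q\nmid k/q$, gives $v_q(k)=1$ and $c=k/q$; then $n\equiv 1\pmod{k/q}$ and $n\equiv 1\pmod q$ combine via CRT to $n\equiv 1\pmod k$, again a contradiction.
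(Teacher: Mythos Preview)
Your proof is correct. For part (i) your internal-direct-product argument via the Chinese Remainder Theorem is essentially a repackaging of the paper's explicit splitting maps, so there is nothing to distinguish there.

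For part (ii), however, your route is genuinely different from the paper's. The paper never isolates the condition $\gcd(h,k/h)=1$; instead it directly writes down the retraction $\pi(a^ib^j)=a^{ud_Gi}$, where $u$ is chosen so that $uq\equiv 1\pmod{k/h}$ (which uses only $q\nmid k/h$), and then verifies $\pi|_{\langle a^h\rangle}=\id$ using the congruence $d_G\equiv q\pmod c$ together with $\langle a^h\rangle\subseteq Z(G)$. Your approach instead reduces the existence of a retraction to the arithmetic statement $\gcd(h,k/h)=1$ and proves that via $p$-adic valuations and the lifting-the-exponent lemma. Both arguments are valid: the paper's is shorter and more constructive, while yours makes explicit a coprimality fact that the paper's argument only yields implicitly (since the existence of an $x$ with $h\mid x$ and $x\equiv 1\pmod{k/h}$ already forces $\gcd(h,k/h)=1$). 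Your verification that $H$ is non-abelian is also more detailed than the paper's, which simply asserts this follows from the definitions.
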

\begin{proof}
  The final claims in (i) and (ii) are immediate consequences of the definitions of $d$ and $h$.

  (i) Suppose that $\gcd(c,\frac{k}{c})=1$.  Let $u\in\BZ$ be such that $uc\equiv 1\bmod \frac{k}{c}$.  Set $K=\cyc{a^{k/c}}$, a normal subgroup of $G$ by Lemma \ref{kqcent}.  Then using the presentation of $G$, we can give a presentation for $G/K$ as
  $$G/K \cong \cyc{x,y \ | \ x^{k/c}=y^{ql}=1, yxy^{-1}=x^n} \cong \BZ_{k/c}\rtimes_n\BZ_{ql}.$$
  Let $\pi\colon G\to G/K$ be the canonical projection.  Define $\xi \colon G/K\to G$ by $\xi(x) = a^{cu}$ and $\xi(y)=b$.  It is then easily verified that $\xi$ is a group homomorphism.  Moreover, by assumptions on $c$ and $u$, $\xi$ also satisfies $\pi\circ\xi = \id_{G/K}$.  Thus $K$ has a complement in $G$.  Since $K\subseteq Z(G)$, it follows that $K$ is in fact a direct summand (and therefore retract) of $G$, and the first isomorphism in (i) follows.

  Suppose now that we also had $q\nmid l$.  Let $v\in\BZ$ be such that $vl\equiv 1\bmod q$.  Let $H=Z(G)$, an obviously normal subgroup of $G$.  Then using the presentation of $G$, we can give a presentation for $G/H$ as
  $$G/H \cong \cyc{x,y \ | \ x^{k/c}=y^q = 1, yxy^{-1}=x^n} \cong \BZ_{k/c}\rtimes_n \BZ_q.$$
  Again we let $\pi\colon G\to G/H$ be the canonical projection.  Define $\psi\colon G/H\to G$ by $\psi(x)=a^{cu}$ and $\psi(y)=b^{vl}$.  It is then easily verified that $\psi$ is a group homomorphism such that $\pi\circ\psi=\id_{G/H}$.  Thus $Z(G)$ has a complement in $G$, and is thus necessarily a direct summand of $G$, giving the second isomorphism.  The final isomorphism follows directly from Lemma \ref{kqcent}.

  (ii)  Suppose that $q\nmid \frac{k}{h}$.  Let $u\in\BZ$ be such that $uq\equiv 1\bmod \frac{k}{h}$.  Let $K=\cyc{a^{d_G}}=\cyc{a^h}\cong\BZ_{k/h}$.  Since conjugation by $b$ in $G$ fixes $Z(G)$, by definition of $d_G$ we have
  $$a^{id_G}=a^{qi}, \forall a^i\in Z(G).$$
  So define $\pi\colon G\to G$ by $\pi(a^i b^j)=\pi(a^i)\pi(b^j)=a^{udi}$.  Then it easily follows that $\pi$ is a group homomorphism such that $K=\Img(\pi)$ and $\pi|_{K}=\id_K$.  Setting $H=\ker(\pi)$, a necessarily normal subgroup of $G$, then by \cite[Thm. 7.20.iv]{R} $G\cong H\rtimes K$.  Since $K$ is also normal in $G$, we in fact have $G\cong H\times K$.  By definition of $\pi$, it easily follows that $H=\ker(\pi)=\cyc{a^{k/h},b}=\cyc{x,y \ | \ x^h = y^{ql}=1, yxy^{-1}=x^n} = \BZ_h\rtimes_n\BZ_{ql}$.  This establishes the desired isomorphism.
\end{proof}
\begin{ex}\label{splitex} Here are a few examples and counterexamples of the previous Proposition.
\begin{enumerate}
  \item If $\gcd(h,k/h)=1$, then part (ii) of the proposition applies.  This is obvious when $q\nmid k$, and when $q\mid k$ it follows from Corollary \ref{classcor}.
  \item Taking $k=8$, $q=2$, $n=3$ gives $c=2$, $d=4$, $h=4$.  Thus $\gcd(c,k/c)=\gcd(2,2)=2$ and $\gcd(q,k/h)=\gcd(2,2)=2$, so neither part of the proposition applies.  Another example of this with $q\neq 2$ is given by $k=99$, $q=3$, $n=34$.
  \item Taking $k=12$, $q=2$, $n=5$ gives $c=4$, $d=6$, $h=6$.  Thus $\gcd(c,k/c)=\gcd(4,3)=1$ and $\gcd(q,k/h)=\gcd(2,2)=2$.  Therefore part (i) of the proposition applies but part (ii) does not.  Another example of this with $q\neq 2$ is given by $k=603$, $q=3$, and $n=37$.
  \item Taking $k=12$, $q=2$, $n=7$ gives $c=6$, $d=8$, and $h=4$.  Thus $\gcd(c,k/c)=\gcd(6,2)=2$ and $\gcd(q,k/h)=\gcd(2,3)=1$.  Therefore part (i) of the proposition does not apply but part (ii) does.  No examples of this behavior with $2<q<3000$ and $k\leq 200,000$ exist, suggesting that if $q>2$ and $\gcd(q,k/h)=1$, then $\gcd(c,k/c)=1$.  Proving this would seem to require some analysis of cyclotomic polynomials in order to better understand the values $d$ and $h$.
  \item Taking $k=33$, $q=2$, $n=10$ gives $c=3$, $d=11$, and $h=11$.  Thus $\gcd(c,k/c)=\gcd(3,11)=1$ and $\gcd(q,k/h)=\gcd(2,3)=1$.  Therefore both parts of the proposition apply.  Another example of this with $q\neq 2$ is given by $k=7$, $q=3$, and $n=2$.
\end{enumerate}
\end{ex}

Now that we know the centers of our groups, we can also give a complete description of all (non-singleton) conjugacy classes.  We will need this when considering the representation theory of $\G$, as well as to determine the irreducibles modules over $\D(\G)$ (see Proposition \ref{MOrho}).

\begin{lem} \label{kqclass} Let $G\cong\G$ be as in Definition \ref{presentation}, define $c$ as in equation \ref{cvalue}, and let $i,j\in\BZ$.
  \begin{enumerate}
    \item If $a^i\not\in Z(G)$, and $q\mid j$, then $\class(a^ib^j) = \{ a^ib^j, a^{in}b^j, a^{in^2}b^j,...,a^{in^{q-1}}b^j\}$ and $|\class(a^i b^j)| = q$.  In particular, we have that $k\equiv c\bmod q$, and there are a total of $l(k-c)/q$ distinct conjugacy classes of this form.
    \item If $q\nmid j$, then $$\class(a^i b^j) = \{ a^{i + hc}b^j\}_{h=1}^{k/c},$$
    In particular, $|\class(a^i b^j)| = k/c$, and there are $cl(q-1)$ distinct conjugacy classes of this form.
    \item All conjugacy classes in $G$ are either singletons or one of the above.
  \end{enumerate}
\end{lem}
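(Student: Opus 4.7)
The plan is to compute conjugates $(a^r b^s)(a^i b^j)(a^r b^s)^{-1}$ using the identity already established in the proof of Lemma \ref{kqcent}, and then to describe the resulting orbit by a case analysis on $j \bmod q$. Starting from
$$(a^r b^s)(a^i b^j)(b^{-s} a^{-r}) = a^{(1-n^j)r + n^s i} b^j,$$
every conjugate of $a^i b^j$ has the form $a^e b^j$ with $e$ varying over the set
$$E(i,j) = \{\, n^s i + (1-n^j) r \pmod{k} : r \in \BZ, \ s \in \{0,1,\dots,q-1\}\,\},$$
where we have used Lemma \ref{minq}.i to restrict $s$ to a set of residues mod $q$.

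For case (i), assume $q \mid j$, so $n^j \equiv 1 \bmod k$ and the $r$-contribution vanishes, leaving $E(i,j) = \{i, ni, \dots, n^{q-1}i\} \pmod{k}$. Because $\cyc{n} \leq \Aut(\BZ_k)$ has prime order $q$, each such orbit has size $1$ or $q$; size $1$ occurs exactly when $ni \equiv i \bmod k$, i.e., when $a^i \in \cyc{a^{k/c}}$, which by Lemma \ref{kqcent} is the $a$-part of $Z(G)$. If $a^i \notin Z(G)$ we therefore recover the $q$ listed elements. For the count, the values of $j$ in $\{0,\dots,ql-1\}$ with $q\mid j$ number $l$, and the $k-c$ non-central values of $i$ partition into $\cyc{n}$-orbits of size $q$; this both yields $l(k-c)/q$ classes of the required form and proves $k \equiv c \bmod q$.

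For case (ii), assume $q \nmid j$. Lemma \ref{kqcent} gives $\gcd(n^j-1,k) = c$, so $(1-n^j)\BZ \equiv c\BZ \pmod{k}$, while $n \equiv 1 \bmod c$ (since $c \mid n-1$) forces $n^s i \equiv i \bmod c$ for every $s$. Hence every element of $E(i,j)$ lies in the coset $i + c\BZ \pmod{k}$, and letting $r$ range we obtain every such element, so $|E(i,j)| = k/c$ with the description stated. For the count, the values of $j$ with $q \nmid j$ number $l(q-1)$, and for each such $j$ the values of $i$ partition into exactly $c$ distinct cosets of $c\BZ_k$, giving $cl(q-1)$ classes.

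Part (iii) then follows by adding class sizes: the $cl$ singletons from $Z(G)$ contribute $cl$, case (i) contributes $l(k-c)$, and case (ii) contributes $kl(q-1)$, summing to $klq = |G|$, so no class has been missed. The only real subtlety is keeping the two orbit mechanisms (the $\cyc{n}$-action when $q \mid j$ versus translation by $c\BZ_k$ when $q \nmid j$) properly separated; the order-counting at the end is the clean sanity check that both descriptions together, plus singletons, exactly exhaust $G$.
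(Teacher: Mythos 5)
Your proof is correct and takes essentially the same route as the paper: both rest on the conjugation formula $(a^rb^s)(a^ib^j)(b^{-s}a^{-r})=a^{(1-n^j)r+n^si}b^j$ together with Lemma \ref{kqcent}'s identity $\gcd(n^j-1,k)=c$ for $q\nmid j$. The only cosmetic difference is bookkeeping: the paper gets the count $cl(q-1)$ in (ii) from the class equation and treats (iii) as immediate, whereas you count the cosets of $c\BZ_k$ directly and use the total element count $cl+l(k-c)+kl(q-1)=klq$ to justify (iii); both are valid.
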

\begin{proof}
  iii) is immediate..

  i) Suppose $q\nmid j$ and $a^i\not\in Z(G)$.  It is clear that all conjugates of $a^i$ are given as $b^j a^i b^{-j}=a^{in^j}$ for some $j$.  When $a^i\not\in Z(G)$ these elements are all distinct for $1\leq j\leq q-1$.  Thus the first part follows.  For the second part, we observe that there are $k-c$ non-central powers of $a$.  Since the conjugacy class of each such power of $a$ has order $q$ and $\bigcup_{h=1}^{p} \class(a^h) = \cyc{a}$, we subsequently must have that $k-c\equiv 0\bmod q$ and that there are $(k-c)/q$ distinct conjugacy classes of non-central powers of $a$.  There are then $l$ distinct choices of $b^j$ with $q\mid j$, giving a total of $l(k-c)/q$ conjugacy classes of the relevant form.

  ii)  Suppose $q\nmid j$.  We first have that
    \begin{eqnarray*}
        (a^s b^t) (a^i b^j) (b^{-t} a^{-s}) &=& a^s (b^t a^i b^{-t})b^j a^{-s}\\
        &=& a^s a^{i n^t} b^j a^{-s}\\
        &=& a^{s-s n^j + i n^t} b^j\\
        &=& a^{-s(n^j-1)+ i n^t} b^j.
    \end{eqnarray*}
    Taking $t=j$, the exponent on $a$ is given by $i+(i-s)(n^j-1)$.  By Lemma \ref{kqcent}, $\gcd(n^j-1,k)=c$, and since $s$ can be chosen arbitrarily, this value may be written as $i+ch$, $0\leq h<k/c$ and vice versa.  Thus $|\class(a^i b^j)|\geq k/c$.

    Now for a generic $t$, using Lemma \ref{minq} we can write the exponent on $a$ as
    \begin{eqnarray*}
      i n^t-s(n^j-1) = i + i(n^t-1) - s(n^j-1) = i+ch,& h\in\BZ.
    \end{eqnarray*}
    Therefore, $|\class(a^i b^j)|=k/c$ and $\class(a^ib^j) = \{ a^ib^j, a^{in}b^j, a^{in^2}b^j,...,a^{in^{q-1}}b^j\}$ as desired.

    For the remaining claim, if $m$ is the number of distinct conjugacy classes amongst the $\class(a^i b^j)$, then by the class equation
    $$kql= cl + q\frac{l(k-c)}{q}+m\frac{k}{c},$$
    from which it follows that $m=cl(q-1)$ as desired.
\end{proof}
This result gives us a couple of simple corollaries that will be useful later in establishing formulas for indicators.
\begin{cor}\label{classcor}
Let $k,q,n,d$ be as in Definition \ref{presentation} and equation (\ref{dvalue}).  Then
\begin{enumerate}
\item $q\mid k \iff q\mid c$
\item $q\mid k \Rightarrow q\mid d$
\item $q\mid d \iff n\equiv 1\bmod q$.
\end{enumerate}
\end{cor}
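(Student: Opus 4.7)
The plan is to handle the three parts in order, with (ii) following quickly from (i), and to invoke Fermat's little theorem as the main number-theoretic tool combined with Lemma \ref{minq}.

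For part (i), one direction is essentially free: since $c = \gcd(n-1,k)$ divides $k$ by definition, $q \mid c$ immediately gives $q \mid k$. For the converse, I would reduce the hypothesis $n^q \equiv 1 \bmod k$ modulo $q$ (which is legitimate because $q \mid k$), obtaining $n^q \equiv 1 \bmod q$. Fermat's little theorem gives $n^q \equiv n \bmod q$, so $n \equiv 1 \bmod q$, i.e.\ $q \mid n-1$. Combined with $q \mid k$, this yields $q \mid \gcd(n-1,k) = c$.

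Part (ii) then follows at once: assuming $q \mid k$, part (i) gives $q \mid c$, and Lemma \ref{minq}.iv says $d \equiv q \bmod c$, so $c$ (and hence $q$) divides $d - q$, whence $q \mid d$.

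For part (iii), I would exploit the factorization $d(n-1) = n^q - 1$ from equation (\ref{dvalue}). If $n \equiv 1 \bmod q$, then $n^j \equiv 1 \bmod q$ for every $j$, and summing the $q$ terms defining $d$ yields $d \equiv q \equiv 0 \bmod q$. Conversely, if $q \mid d$, then $q \mid d(n-1) = n^q - 1$; Fermat's little theorem gives $n^q \equiv n \bmod q$, so $q \mid n-1$ as required. I do not expect any genuine obstacle in this corollary, since all three statements are short deductions from Fermat's little theorem and the previously established Lemma \ref{minq}; the only thing to take care with is that reducing the congruence $n^q \equiv 1 \bmod k$ modulo $q$ in part (i) requires $q \mid k$, which is exactly what is assumed.
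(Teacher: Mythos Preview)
Your proof is correct. The arguments for all three parts go through exactly as you describe; Fermat's little theorem does all the work, and the reduction modulo $q$ in part (i) is indeed legitimate under the hypothesis $q\mid k$.

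The paper proceeds differently in two of the three parts. For (i), rather than invoking Fermat, the paper appeals to the preceding Lemma \ref{kqclass}, whose conjugacy-class count forces $k\equiv c\bmod q$; the equivalence is then immediate. For (iii), the paper simply cites an external reference (\cite[Theorem~2.4.3]{W}) rather than arguing directly. Part (ii) is handled identically in both. Your route is more self-contained: it avoids relying on the group-theoretic class equation for (i) and gives an explicit two-line argument for (iii) in place of a citation. The paper's route for (i), on the other hand, extracts the statement as a free byproduct of structural work already done on $G$, which fits the narrative flow but makes the corollary look less elementary than it really is.
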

\begin{proof}
  By the preceding lemma we have that $k\equiv c\bmod q$, so (i) follows.  Since we also have that $d\equiv q\bmod c$ by Lemma \ref{minq}, it follows that if $q\mid c$, then $q\mid d$, which gives (ii).

  The equivalence in (iii) is an immediate consequence of \cite[Theorem 2.4.3]{W} and the definition of $d$.
\end{proof}
In general, it fails to be true that $k\equiv d\bmod q$.  A simple example is given by taking $k=15, q=2,n=11$, which gives $d=12\not\equiv 15\bmod 2$.

\begin{cor}\label{classcor2}
Let $k,q,n,d$ be as in Definition \ref{presentation} and equation (\ref{dvalue}).  Let $m,r\in\BN$.  Then
$$q\mid m \Rightarrow \left( k\mid mr \Rightarrow kq\mid mdr\right).$$
\end{cor}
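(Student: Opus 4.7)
The goal is to show $kq \mid mdr$ given $q \mid m$ and $k \mid mr$. My plan is to reduce to a small divisibility question and then split on whether $q$ divides $k$, using Corollary \ref{classcor} to handle each half.

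First I would write the hypothesis $k \mid mr$ as $mr = kj$ for some $j \in \BN$, so that $mdr = kjd$. The claim $kq \mid kjd$ then reduces to the single divisibility $q \mid jd$.

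\textbf{Case 1:} If $q \nmid k$, then by Corollary \ref{classcor}(i) also $q \nmid c$, and in particular $\gcd(q,k) = 1$ since $q$ is prime. The hypothesis $q \mid m$ gives $q \mid mr = kj$; coprimality of $q$ with $k$ forces $q \mid j$, and hence $q \mid jd$.

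\textbf{Case 2:} If $q \mid k$, then Corollary \ref{classcor}(ii) gives $q \mid d$ directly, so $q \mid jd$ with no further work. (Notice that the hypothesis $q \mid m$ is not actually needed in this case; the divisibility $q \mid d$ from the structure of $G$ already absorbs it.)

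In either case $q \mid jd$, so $kq \mid kjd = mdr$, as desired. The proof is short, and the only real choice point is deciding to condition on whether $q \mid k$; after that, Corollary \ref{classcor} dispatches both cases. I do not expect a serious obstacle here, since the arithmetic of $c$ and $d$ captured in Lemma \ref{minq} and Corollary \ref{classcor} has already isolated exactly the information one needs.
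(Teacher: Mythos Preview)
Your proof is correct and follows essentially the same route as the paper: split on whether $q\mid k$, in the first case use $\gcd(q,k)=1$ together with $q\mid m$ to get $kq\mid mr$ (equivalently your $q\mid j$), and in the second case invoke Corollary~\ref{classcor} to obtain $q\mid d$. The only difference is that you introduce the auxiliary integer $j$ to phrase things, whereas the paper works directly with divisibility of $mr$; the invocation of Corollary~\ref{classcor}(i) in your Case~1 is harmless but unnecessary, since $\gcd(q,k)=1$ already follows from $q$ prime and $q\nmid k$.
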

\begin{proof}
  Suppose $q\mid m$ and $k\mid mr$.  If $q\nmid k$, then since $q\mid m$ we have $kq\mid mr$, and thus $kq\mid mdr$.  On the other hand, if $q\mid k$, then $q\mid d$ by the previous corollary, and again we conclude that $kq\mid mdr$.
\end{proof}

\begin{lem}\label{classcor3}
Let $k,q,n$ be as in Definition \ref{presentation}.  Define $d$ as in equation (\ref{dvalue}).  Suppose that $q>2$.  Then the following hold:
\begin{enumerate}
    \item $\sum_{j=0}^{q-2}(j+1)n^j \equiv -d \bmod q$
    \item If $q\mid d$, then $kq\mid d(d-q)$
    \item $q^2\nmid d$.
\end{enumerate}
\end{lem}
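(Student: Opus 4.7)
The plan rests on two tools: a polynomial/derivative identity for part (i), and a binomial-theorem expansion for parts (ii) and (iii). The pivotal fact throughout is Corollary~\ref{classcor}.iii: $q \mid d$ if and only if $n \equiv 1 \pmod q$.

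For (i), I would introduce $f(x) = \sum_{j=0}^{q-1} x^j = (x^q-1)/(x-1)$, whose formal derivative $f'(x) = \sum_{j=0}^{q-2}(j+1) x^j$ is precisely the sum in the statement. Differentiating the identity $(x-1)f(x) = x^q-1$ and specializing to $x=n$ yields $f(n) + (n-1)f'(n) = qn^{q-1}$; reducing mod $q$ produces the core congruence $d + (n-1) f'(n) \equiv 0 \pmod q$. When $n \equiv 1 \pmod q$, both $d \equiv 0$ and $f'(n) \equiv \binom{q}{2}$ vanish mod $q$ (the latter because $q$ is odd), matching the claim. When $n \not\equiv 1 \pmod q$, I would use the Frobenius factorization $x^q - 1 \equiv (x-1)^q \pmod q$, giving $f(x) \equiv (x-1)^{q-1}$ and $f'(x) \equiv -(x-1)^{q-2}$ in $\BF_q[x]$, combined with Fermat's little theorem applied to $(n-1)d = n^q-1 \equiv n-1 \pmod q$, to extract the stated congruence from the derivative identity.

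For (ii) and (iii), the hypothesis $q \mid d$ permits writing $n = 1 + qs$ with $s \in \BZ$ nonzero (nonzero since $n \not\equiv 1 \pmod k$). The hockey-stick identity then gives
\begin{equation*}
d = \sum_{j=0}^{q-1}(1+qs)^j = \sum_{i=0}^{q-1}\binom{q}{i+1}(qs)^i.
\end{equation*}
Since $q$ is an odd prime, $q \mid \binom{q}{j}$ for $1 \leq j \leq q-1$; combined with the factors of $(qs)^i$, every term past the leading $\binom{q}{1} = q$ is divisible by $q^2$ (even the top term $(qs)^{q-1}$ contributes $q^{q-1} \geq q^2$). Hence $d \equiv q \pmod{q^2}$, which immediately gives (iii): either $q \nmid d$ (vacuous) or $d \equiv q \not\equiv 0 \pmod{q^2}$.

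Refining the same expansion to track the factor of $s$ shows $d - q = q^2 s V$ for some $V \in \BZ$ (after factoring $q^2 s$ out of each contributing term). For (ii), I would combine this with $k \mid (n-1)d = qsd$ (from $(n-1)d = n^q-1 \equiv 0 \pmod k$) to conclude
\begin{equation*}
d(d-q) = q^2 s V d = qV \cdot (qsd),
\end{equation*}
so $kq \mid d(d-q)$, as claimed. The main obstacle is the modular bookkeeping in (i): the derivative identity naturally introduces an extra $(n-1)$ factor on the derivative side, and reconciling it with the stated $-d$ demands careful case analysis together with the polynomial factorization in $\BF_q[x]$ and Fermat's little theorem noted above.
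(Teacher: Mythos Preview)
Your arguments for (ii) and (iii) are correct and take a genuinely different route from the paper. The paper factors $d-q=(n-1)S$ with $S=\sum_{j=0}^{q-2}(q-1-j)n^{j}$, then uses (i) to deduce $S\equiv d\equiv 0\pmod q$ under the hypothesis $q\mid d$; this gives $q^{2}\mid(n-1)S=d-q$ for (iii) and $kq\mid(n^{q}-1)S=d(d-q)$ for (ii). Your binomial expansion $d=\sum_{i=0}^{q-1}\binom{q}{i+1}(qs)^{i}$ with $n=1+qs$ makes both divisibilities transparent without invoking (i) at all, which is an advantage; the paper's approach has the virtue of isolating the factorization $d(d-q)=(n^{q}-1)S$, from which $k\mid d(d-q)$ is immediate regardless of whether $q\mid d$.

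There is a genuine gap in your treatment of (i) in the case $n\not\equiv 1\pmod q$. Your Frobenius/Fermat computation gives $f'(n)\equiv -(n-1)^{q-2}\equiv -(n-1)^{-1}\pmod q$, while $-d\equiv -1\pmod q$; these agree only when $n\equiv 2\pmod q$, so ``extracting the stated congruence from the derivative identity'' does not succeed in general. In fact (i) as written is false in that case: for $q=5$, $n=3$ (valid with $k=11$) one has $\sum_{j=0}^{3}(j+1)3^{j}=142\equiv 2\pmod 5$ while $-d=-121\equiv 4\pmod 5$. The paper's own line $f'(n)=qn^{q-1}-d$ is not a correct integer identity either; the derivative relation one actually obtains is $(n-1)f'(n)=qn^{q-1}-d$, which only yields $f'(n)\equiv -d\pmod q$ after a spurious cancellation of $n-1$. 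Since (i) is invoked only under the hypothesis $q\mid d$ (equivalently $n\equiv 1\pmod q$), and your argument for that case is correct, this does not affect (ii) or (iii); but you should flag the issue rather than attempt to prove (i) in the generality stated.
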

\begin{proof}
  We start by observing that
  \begin{eqnarray}
    d - q &=& \sum_{j=0}^{q-1}n^j -q\nonumber\\
    &=& \sum_{j=1}^{q-1}n^j - (q-1)\nonumber\\
    &=& \sum_{j=1}^{q-1}(q-j)n^j - \sum_{j=0}^{q-2}(q-1-j)n^j\nonumber\\
    &=& \sum_{j=0}^{q-2}(q-1-j)n^{j+1} - \sum_{j=0}^{q-2} (q-1-j)n^j\nonumber\\
    &=& (n-1)\left(\sum_{j=0}^{q-2}(q-1-j)n^j\right).\label{dmqfact}
  \end{eqnarray}
  Therefore,
  \begin{eqnarray*}
    d(d-q) = (n^q-1)\left(\sum_{j=0}^{q-2}(q-1-j)n^j\right).
  \end{eqnarray*}
  By assumption on $n$, we have that $n^q\equiv 1\bmod k$.  Furthermore,
  $$\sum_{j=0}^{q-2} (q-1-j)n^j\equiv -\sum_{j=0}^{q-2} (j+1)n^j\bmod q.$$
  We claim that $\sum_{j=0}^{q-2} (j+1)n^j\equiv -d\bmod q$.  This will prove (i) and (ii).

  To this end, define
  \begin{eqnarray}
    f(n) = \sum_{j=0}^{q-2}n^{j+1} = \frac{n^q-1}{n-1}-1.
  \end{eqnarray}
  Differentiating all sides with respect to $n$, we get
  $$f'(n) = \sum_{j=0}^{q-2}(j+1)n^j = q n^{q-1} - \frac{n^q-1}{n-1} = q n^{q-1} - d,$$
  from which it immediately follows that
  \begin{eqnarray}\label{dmqequiv}
    \sum_{j=0}^{q-2}(j+1)n^j \equiv -d \bmod q,
  \end{eqnarray}
  which is the congruence desired.

  Finally, for (iii), we first observe that if $q\nmid d$ then clearly $q^2\nmid d$.  So suppose that $q\mid d$.  By Corollary \ref{classcor}.iii, we then have $n\equiv 1\bmod q$.  Therefore, by (\ref{dmqfact}) and (\ref{dmqequiv}) we conclude that $q^2\mid d-q$.  Thus $q^2\nmid d$.
\end{proof}
Note that the assumption $q>2$ in the above is essential.  For example, taking $k=8, q=2,n=3$ gives an easy counter-example to part (iii).  Indeed, for $k=2^s$, $q=2$, and $n=2^{s-1}-1$, where $s\geq 3$, then $2^{s-1}$ divides $d$.

We conclude this section by determining the nature of the centralizers in $G$.

\begin{lem}\label{kqnormalize} Let $G=\BZ_k\rtimes_n\BZ_q$ be as in Definition \ref{presentation}.  Let $i,j\in\BN$ and define $c$ as in equation \ref{cvalue}.  Then
\begin{enumerate}
  \item $C_G(g) = G$ $\forall g\in Z(G)$.
  \item If $q\mid j$ and $a^i\not\in Z(G)$, then $C_G(a^ib^j) = \cyc{a,b^q} \cong \BZ_k\oplus\BZ_l$.
  \item Suppose $j\not\equiv 0\bmod q$.  Then $C_G(a^i b^j) = \cyc{a^i b^j, a^{k/c},b^q}$ is an abelian group and $|C_G(a^i b^j)|=clq$.
\end{enumerate}
\end{lem}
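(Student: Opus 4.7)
The plan is to derive all three centralizers from the single conjugation formula implied by Lemma \ref{sdpident}. A direct computation using identities (ii)--(iii) of that lemma gives
\[
(a^s b^t)(a^i b^j)(a^s b^t)^{-1} \;=\; a^{\,s + i n^t - s n^j}\, b^j,
\]
so that $a^s b^t \in C_G(a^i b^j)$ if and only if $i(n^t-1) \equiv s(n^j-1) \bmod k$. Part (i) is then immediate from the definition of the center.

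For part (ii), the hypothesis $q\mid j$ together with Lemma \ref{minq}.i gives $n^j\equiv 1\bmod k$, so the centralizer condition collapses to $i(n^t-1)\equiv 0\bmod k$. The assumption $a^i\notin Z(G)$ means $k/c\nmid i$ by Lemma \ref{kqcent}. If $q\nmid t$, Lemma \ref{kqcent} also yields $\gcd(n^t-1,k)=c$, and then $i(n^t-1)\equiv 0\bmod k$ would force $k/c\mid i$, a contradiction; hence $q\mid t$, giving the inclusion $C_G(a^i b^j)\subseteq\langle a,b^q\rangle$. The reverse inclusion is immediate since $b^q\in Z(G)$ and powers of $a$ commute. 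The isomorphism $\langle a,b^q\rangle\cong\BZ_k\oplus\BZ_l$ follows because $\langle a\rangle\cap\langle b^q\rangle\subseteq\langle a\rangle\cap\langle b\rangle=1$ (the semidirect product being internally disjoint on generators).

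For part (iii), I would first note that $\langle a^i b^j,a^{k/c},b^q\rangle$ is contained in $C_G(a^i b^j)$ and is abelian, since $a^{k/c}$ and $b^q$ generate $Z(G)$ by Lemma \ref{kqcent}. Lemma \ref{kqclass}.ii gives $|\class(a^i b^j)|=k/c$, so orbit--stabilizer yields $|C_G(a^i b^j)|=|G|/(k/c)=clq$. It therefore suffices to check that $|\langle a^i b^j,Z(G)\rangle|=clq$, i.e.\ that $a^i b^j$ has order exactly $q$ in $G/Z(G)$. By Lemma \ref{sdpident}.iv,
\[
(a^i b^j)^r \;=\; a^{\,i\sum_{s=0}^{r-1}n^{js}}\,b^{jr}.
\]
Since $q\nmid j$ and $q$ is prime, $b^{jr}\in\langle b^q\rangle$ iff $q\mid r$. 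At $r=q$, the map $s\mapsto js\bmod q$ is a bijection of $\BZ_q$, so after reindexing $\sum_{s=0}^{q-1}n^{js}\equiv d\bmod k$; by Lemma \ref{minq}.iii this forces $d\equiv 0\bmod k/c$, placing $(a^i b^j)^q$ in $Z(G)$ while no smaller positive power lies there.

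The main obstacle is the order calculation in part (iii): correctly identifying $(a^i b^j)^q$ as central and showing this is minimal requires packaging together three distinct ingredients -- the primality of $q$ (to reindex the geometric sum), the identification $Z(G)=\langle a^{k/c},b^q\rangle$ from Lemma \ref{kqcent}, and the non-obvious divisibility $d\equiv 0\bmod k/c$ from Lemma \ref{minq}.iii.
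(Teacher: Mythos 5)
Your proposal is correct, but for part (iii) it takes a genuinely different route from the paper. Parts (i) and (ii) amount to the same immediate check the paper waves at: the conjugation formula gives the condition $i(n^t-1)\equiv s(n^j-1)\bmod k$, and with $q\mid j$ and $\gcd(n^t-1,k)=c$ (Lemma \ref{kqcent}) one gets $q\mid t$, hence $C_G(a^ib^j)=\cyc{a,b^q}$. For (iii), the paper argues element-wise: it takes an arbitrary $a^sb^t$ in the centralizer, shows $s\equiv u_{j,t}\,i\bmod k/c$ where $u_{j,t}$ is the unit representing $(n^t-1)/(n^j-1)$ modulo $k/c$, and then, via a geometric-series computation of $(a^ib^j)^r$ with $jr\equiv t\bmod q$, identifies $a^sb^t$ explicitly as $z(a^ib^j)^r$ with $z\in Z(G)$; the class size from Lemma \ref{kqclass}.ii enters only at the end to record $|C_G(a^ib^j)|=clq$. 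You instead use Lemma \ref{kqclass}.ii up front through orbit--stabilizer to pin down $|C_G(a^ib^j)|=clq$, and then show the abelian subgroup $\cyc{a^ib^j,a^{k/c},b^q}$ already has that order by proving $a^ib^j$ has order exactly $q$ modulo $Z(G)$ (no smaller power can be central because its $b$-exponent is not divisible by $q$, and $(a^ib^j)^q=a^{id}b^{qj}\in Z(G)$ since reindexing gives $\sum_{s=0}^{q-1}n^{js}\equiv d$ and $k/c\mid d$ by Lemma \ref{minq}.iii). This counting argument is sound and there is no circularity, since Lemma \ref{kqclass} is proved by direct conjugation without reference to centralizers; what it buys is brevity and the avoidance of the $u_{j,t}$ bookkeeping, while the paper's argument buys a little more: it exhibits each centralizing element explicitly as a central multiple of a power of $a^ib^j$, rather than deducing the equality of the two subgroups purely from cardinality.
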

\begin{proof}
  i) Is trivial.

  ii) Follows immediately from the definition of $G$.

  iii) Since $Z(G) = \cyc{a^{k/c},b^q}$, we immediately have $C_G(a^i b^j)\supseteq \cyc{a^i b^j, a^{k/c},b^q}$.

  Now suppose $a^s b^t \in C_G(a^i b^j)$.  Then
  $$(a^s b^t) (a^i b^j) (b^{-t} a^{-s}) = a^{(1-n^j)s + i n^t} b^j = a^i b^j$$
  if and only if $i \equiv (1-n^j)s + in^t \bmod k$ $\iff$ $(n^j-1) s \equiv (n^t-1)i \bmod k$.  If $t\equiv 0\bmod q$, this implies $(n^j-1)s\equiv 0\bmod k\iff s\equiv 0\bmod k/c$, whence $a^s b^t \in Z(G)$.
  On the other hand, suppose $t\not\equiv 0\bmod q$.  Then by Lemma \ref{kqcent}, the fraction $(n^t-1)/(n^j-1)$ corresponds to a well-defined unit $u_{j,t}$ modulo $k/c$, and so $s\equiv u_{j,t}i\bmod k/c$.  Let $r\in\BN$ be such that $jr\equiv t\bmod q$.  Then
  \begin{eqnarray*}
    (a^i b^j)^r = a^{i\sum_{l=0}^{r-1}n^{jl}}b^{jr}.
  \end{eqnarray*}
  Now by geometric series, we have that $i\sum_{l=0}^{r-1}n^{jl} \equiv i u_{j,t}\bmod k/c$.  We conclude that $a^s b^t = z(a^i b^j)^r$, for some $z\in Z(G)$.  Therefore $C_G(a^i b^j) = \cyc{a^i b^j, a^{k/c},b^q}$ as claimed.  That $C_G(a^i b^j)$ is abelian is immediate.  Furthermore, $|C_G(a^i b^j)| = |G|/|\class(a^i b^j)| = cql$ by Lemma \ref{kqclass}.  This establishes all claims.
\end{proof}

We should note that the subgroups $C_G(a^i b^j)$ can have different structures for a given $G$ depending on the choices of $i$ and $j$.  This is because, in general, $1\neq (a^i b^j)^q\in Z(G)$.  In particular, it need not be isomorphic to any of $\BZ_q\oplus \BZ_c\oplus \BZ_l$, $\BZ_{clq}$, $\BZ_{cq}\oplus\BZ_l$, or $\BZ_c\oplus\BZ_{ql}$.  The characters, and thus structure, of $C_G(a^i b^j)$ is ostensibly important to determining the indicators of a number of irreducible $\D(G)$-modules (see Proposition \ref{MOrho}).   However, we will soon see that we only need to know $\chi|_{Z(G)}$, where $\chi$ is any irreducible character of $C_G(a^i b^j)$.  In particular, we only need to know the irreducible characters, and thus structure of, $Z(G)$, which we have already determined in Lemma \ref{kqcent}.

%---------------------Representations for semi-direct products Z_k by Z_ql--------------------------------------

\section{Representations of $\G$}\label{kqrepsect}
We continue to let $\G$ be as in Definition \ref{presentation}.  In the previous section we determined the multiplication and conjugacy classes of $\G$, so we can now proceed to determine its representation theory, and subsequently compute the indicators of its irreducible modules.  Our reference for the character theory of finite groups, especially induced characters, will be \cite{I}.  For any group $G$, we will always denotes the group of irreducible characters of $G$ by $\hat{G}$.  Furthermore, we denote the trivial character of a group $G$ by $1_{\hat{G}}$, or by $1$ when the group in question is to be understood from the context.

\begin{lem}\label{kqnonlinchar}
  Let $G=\G$ be as in Definition \ref{presentation}.  Let $H=\cyc{a,b^q}\leq G$, an abelian normal subgroup.  Let $\phi_{r,s}=\xi_r\otimes\psi_s\in \widehat{\BZ_k}\otimes\widehat{\BZ_l}$ be the linear character of $H$ given by $\phi_{r,s}(a^i b^{qj})=\xi_r(a^i)\psi_s(b^{qj})=\mu_k^{ri} \mu_l^{sj}$, where $\mu_k$ and $\mu_l$ are any fixed primitive $k$-th and $l$-th roots of unity, respectively.  Then $\phi_{r,s}^G$, the induced character on $G$, is of dimension $q$.  It is irreducible $\iff$ $r\not\equiv 0\bmod k/c$.  Furthermore, there are $l(k-c)/q$ non-equivalent characters amongst these irreducible characters.
\end{lem}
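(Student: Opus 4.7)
The plan is to apply Clifford theory to the normal subgroup $H=\cyc{a,b^q}$, which is abelian of index $q$ in $G$ (since $|H|=kl$ and $|G|=kql$). From this, the dimension statement is immediate: $\dim\phi_{r,s}^G = [G:H]\cdot\dim\phi_{r,s} = q$. For the irreducibility criterion, the standard Clifford-theoretic fact is that, for $H$ normal in $G$, $\phi_{r,s}^G$ is irreducible if and only if the inertia group $I_G(\phi_{r,s})=\{g\in G:\phi_{r,s}^g=\phi_{r,s}\}$ coincides with $H$ itself.

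To identify the inertia group, I would take coset representatives $1,b,\dots,b^{q-1}$ of $H$ in $G$ and compute each conjugation action. Using Lemma \ref{sdpident} together with the fact that $b^{qj}\in Z(G)$ by Lemma \ref{kqcent}, we get
$$b^t(a^i b^{qj})b^{-t} = a^{in^t}b^{qj},$$
from which $\phi_{r,s}^{b^t}=\phi_{rn^t,s}$. Hence $b^t$ stabilizes $\phi_{r,s}$ if and only if $rn^t\equiv r\bmod k$, i.e.\ $r(n^t-1)\equiv 0\bmod k$. Lemma \ref{kqcent} gives $\gcd(n^t-1,k)=c$ for every $1\le t\le q-1$, so this congruence is equivalent to $r\equiv 0\bmod k/c$. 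Consequently $I_G(\phi_{r,s})=H$ precisely when $r\not\equiv 0\bmod k/c$, which is the stated irreducibility criterion.

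For the enumeration I would count the $b$-orbits on the set of characters $\phi_{r,s}$ with $r\not\equiv 0\bmod k/c$. There are $(k-c)\cdot l$ such characters, since exactly $c$ of the $k$ residue classes for $r$ satisfy $r\equiv 0\bmod k/c$; the inertia computation above shows each orbit has size $q$. By the standard Clifford/Mackey fact that two induced characters from a normal subgroup agree if and only if the inducing characters are $G$-conjugate, distinct orbits produce inequivalent irreducibles, yielding exactly $l(k-c)/q$ characters of dimension $q$. The main subtlety is the uniform value $\gcd(n^t-1,k)=c$ across all $1\le t\le q-1$, which is precisely where the prime-order assumption on $n$ modulo $k$ (via the isomorphism $G\cong\BZ_k\rtimes_{n^t}\BZ_{ql}$ used in Lemma \ref{kqcent}) enters in an essential way.
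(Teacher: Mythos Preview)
Your proof is correct. The paper takes a more computational route: it writes out the induced character explicitly on each conjugacy class, then checks irreducibility by computing $\langle\phi_{r,s}^G,\phi_{r,s}^G\rangle$ directly and reducing it to $\frac{1}{q}\sum_{u,v=0}^{q-1}\langle\xi_{rn^u},\xi_{rn^v}\rangle$, and similarly handles the equivalence count by computing $\langle\phi_{r,s}^G,\phi_{r',s'}^G\rangle$. Your Clifford-theoretic argument via the inertia group is the conceptual explanation underlying that computation: the paper's double sum over $u,v$ is literally counting $[I_G(\phi_{r,s}):H]$, and its equivalence check is the Mackey criterion you invoke. The key arithmetic step---that $r(n^t-1)\equiv 0\bmod k$ is equivalent to $k/c\mid r$ for every $1\le t\le q-1$---is the same in both, resting on $\gcd(n^t-1,k)=c$ from Lemma~\ref{kqcent}. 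Your approach is shorter and more transparent; the paper's has the minor advantage of producing the explicit values of $\phi_{r,s}^G$ along the way, which it uses later in indicator computations.
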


\begin{proof}
  By the theory of induced characters (see \cite{I} chapter 5), we have
  $$\phi_{r,s}^G(1) = |G| \frac{\phi_{r,s}(1)}{|H|} = \frac{|G|}{|H|} = q,$$
  whence the induced characters all have dimension $q$.

  By Lemma \ref{kqclass}, if $q\mid j$ we have
  $$\phi_{r,s}^G(a^ib^j) = kl \sum_{u=0}^{q-1} \frac{\phi_{r,s}(a^{in^u}b^j)}{kl} = \sum_{u=0}^{q-1} \phi_{r,s}(a^{in^u}b^j)$$
  and
  $$\phi_{r,s}^G(a^i b^j) = 0, \ j\not\equiv 0\bmod q.$$
  Note that the first equality is true even when $a^i\in Z(G)$, as it reduces to the value $q\phi_{r,s}(a^ib^j)$ as expected.

  To determine when $\phi_j^G$ is irreducible, we need only determine when $\ip{\phi_j^G}{\phi_j^G}=1$.  To this end,
  \begin{eqnarray}\label{indirrcheck}
    \ip{\phi_{r,s}^G}{\phi_{r,s}^G} &=& \frac{1}{kql}\sum_{g\in G} \phi_{r,s}^G(g)\overline{\phi_{r,s}^G(g)}\nonumber\\
    &=& \frac{1}{kql}\sum_{u=0}^{q-1}\sum_{v=0}^{q-1}\left( \sum_{x\in\cyc{a}}\sum_{y\in\cyc{b^q}} \phi_{r,s}(x^{n^u}y)\overline{\phi_{r,s}(x^{n^v}y)}\right)\nonumber\\
    &=& \frac{1}{kql} \sum_{u,v=0}^{q-1}\left( \sum_{y\in\cyc{b^q}} \psi_s(y)\overline{\psi_s(y)} \left( \sum_{x\in\cyc{a}} \xi_r(x^{n^u})\overline{\xi_r(x^{n^v})}\right)\right)\nonumber\\
    &=& \frac{1}{ql} \sum_{y\in\cyc{b^q}} \psi_s(y)\overline{\psi_s(y)}\left( \sum_{u,v=0}^{q-1} \ip{\xi_{rn^u}}{\xi_{rn^v}}\right)\nonumber\\
    &=& \frac{1}{q}\ip{\psi_s}{\psi_s}\sum_{u,v=0}^{q-1} \ip{\xi_{rn^u}}{\xi_{rn^v}}\nonumber\\
    &=& \frac{1}{q}\sum_{u,v=0}^{q-1} \ip{\xi_{rn^u}}{\xi_{rn^v}}
  \end{eqnarray}
  Now if $r\equiv 0\bmod k/c$ then $r n^u\equiv r n^v\equiv r\bmod k$ for all $u,v$ and so $\ip{\xi_{r n^u}}{\xi_{r n^v}}=1$ for all $u,v$.  Therefore, when $r\equiv 0\bmod k/c$ we have $(\ref{indirrcheck})=q>1$, and so $\phi_{r,s}^G$ is not irreducible.  On the other hand, if $r\not\equiv 0\bmod k/c$ then $\ip{\xi_{r n^u}}{\xi_{r n^v}}=1$ $\iff$ $n^{u-v}r \equiv r\bmod k$ $\iff$ $n^{u-v}\equiv 1\bmod k$ $\iff$ $u\equiv v\bmod q$.  Therefore, when $r\not\equiv 0\bmod k/c$ we have $\ip{\phi_{r,s}^G}{\phi_{r,s}^G}=1$, and so $\phi_{r,s}^G$ is irreducible as claimed.

  We now need only determine the number of isomorphism classes amongst these characters irreducible characters.  Similarly to the check for irreducibility, we find for $r,r'\not\equiv 0\bmod k/c$ that
  \begin{eqnarray*}
    \ip{\phi_{r,s}^G}{\phi_{r',s'}^G} &=& \frac{1}{q} \ip{\psi_s}{\psi_{s'}}\sum_{u,v=0}^{q-1} \ip{\xi_{rn^u }}{\xi_{n^v r'}},
  \end{eqnarray*}
  and similarly find that $\ip{\xi_{n^u r}}{\phi_{n^v r'}}=1$ $\iff$ $r' \equiv n^{s-t} r\bmod k$.  We conclude $\ip{\phi_{r,s}^G}{\phi_{r',s'}^G}=1$ $\iff$ $s\equiv s'\bmod l$ and $r'\equiv n^{h} r\bmod k$ for some $h$, and that $\ip{\phi_{r,s}^G}{\phi_{r',s'}^G}=0$ otherwise.

  Consequently, there are $l(k-c)/q$ distinct isomorphism classes amongst the irreducible characters as claimed.
\end{proof}

\begin{cor}\label{kqchar}
  Let $G=\G$ be as in Definition \ref{presentation}.  Then there are $cql$ irreducible linear characters of $G$.  They arise as the tensor product of the linear characters of $\cyc{b}\cong\BZ_{ql}$ and $\cyc{a^{k/c}}\cong\BZ_c$.  All irreducible characters of $G$ are either linear, or one of the $q$-dimensional irreducible representations from Lemma \ref{kqnonlinchar}.
\end{cor}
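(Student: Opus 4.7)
The plan is to identify the abelianization $G/G'$ explicitly, which pins down both the count and the structure of the linear characters, and then to use a dimension sum to verify that these together with the $q$-dimensional irreducibles from Lemma \ref{kqnonlinchar} exhaust the irreducible characters of $G$.

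First I would compute the commutator subgroup. Because $\cyc{a}\lhd G$ with cyclic quotient $G/\cyc{a}\cong\BZ_{ql}$, we have $G'\subseteq\cyc{a}$. On the other hand $bab^{-1}a^{-1}=a^{n-1}$ is a commutator, and since $c=\gcd(n-1,k)$ the subgroup of $\cyc{a}$ generated by $a^{n-1}$ is precisely $\cyc{a^c}$; thus $\cyc{a^c}\subseteq G'$. For the reverse inclusion, in $G/\cyc{a^c}$ we have $\overline{bab^{-1}}=\overline{a^n}=\overline{a}$ (using $a^{n-1}\in\cyc{a^c}$), so the quotient is abelian and $G'\subseteq\cyc{a^c}$. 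Therefore $G'=\cyc{a^c}$ has order $k/c$, and $[G:G']=cql$ gives the count of linear characters.

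Next I would verify that $G/G'\cong\BZ_c\oplus\BZ_{ql}$. In the quotient the image $\overline{a}$ has order $c$ and $\overline{b}$ has order $ql$, and $\cyc{\overline{a}}\cap\cyc{\overline{b}}=1$: if $\overline{a}^i=\overline{b}^j$ then $b^{-j}a^i\in\cyc{a^c}\subseteq\cyc{a}$, which forces $b^j\in\cyc{a}\cap\cyc{b}=1$ via the natural projection $G\to\cyc{b}$ (well-defined because $\cyc{a}\lhd G$). Comparing orders gives the direct sum decomposition, and hence the linear characters of $G$ are in bijection with $\widehat{\BZ_c}\times\widehat{\BZ_{ql}}$; identifying these factors abstractly with $\widehat{\cyc{a^{k/c}}}$ and $\widehat{\cyc{b}}$ gives the tensor product description in the statement.

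Finally, a dimension check completes the proof. Summing the squares of the degrees for the $cql$ linear characters and the $l(k-c)/q$ characters of dimension $q$ from Lemma \ref{kqnonlinchar} yields
\[ cql\cdot 1^2 + \frac{l(k-c)}{q}\cdot q^2 = cql + ql(k-c) = qlk = |G|, \]
so no further irreducible characters exist. The only delicate step is pinning down $G'$ exactly; this rests on the two one-line observations above, that $a^{n-1}\in G'$ and that $G/\cyc{a^c}$ is abelian.
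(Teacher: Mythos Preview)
Your argument is correct. Both you and the paper finish with the same dimension count
\[
cql\cdot 1^2 + \frac{l(k-c)}{q}\cdot q^2 = kql = |G|,
\]
so the substantive difference lies in how the $cql$ linear characters are produced. The paper argues more informally: it inflates the characters of $G/\cyc{a}\cong\BZ_{ql}$, asserts that the characters of $\cyc{a^{k/c}}\cong\BZ_c$ also give linear characters of $G$, and then tensors. Your route through the commutator subgroup is cleaner and more self-contained: computing $G'=\cyc{a^c}$ directly and identifying $G/G'\cong\BZ_c\oplus\BZ_{ql}$ makes both the count $[G:G']=cql$ and the tensor-product description of the linear characters immediate, without any appeal to how characters of a central subgroup extend. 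The trade-off is minimal---your version adds two short observations ($a^{n-1}\in G'$ and $G/\cyc{a^c}$ abelian)---and in return the linear-character description is fully justified rather than asserted.
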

\begin{proof}
  Since, $\cyc{a}\lhd G$, the characters of $\cyc{b}=G/\cyc{a}\cong\BZ_{ql}$ are linear characters of $G$.  Additionally, the linear characters of $Z(G)=\cyc{a^{k/c},b^q}$ are linear characters of $G$.  Since $\cyc{b}\cap Z(G) = \cyc{b^q}$, it follows that the tensor product of irreducible linear characters of $\BZ_{ql}$ and $\BZ_c$ are irreducible linear characters of $G$.

  Now by Lemma \ref{kqnonlinchar}, we have
  $$cql + q^2\cdot\frac{l(k-c)}{q} = kql = |G|,$$
  and so these give all possible irreducible representations of $G$, establishing all claims.
\end{proof}

Now that we know the representation theory of $\G$, we can compute its indicators.

\begin{thm}\label{kq1dim}
  Let $G=\G$ be as in Definition \ref{presentation}.  Let $V$ be a 1-dimensional $G$-module with character $\chi=\xi_s\otimes \psi_t\in \widehat{\BZ_{ql}}\otimes \widehat{\BZ_{c}}$, as in Corollary \ref{kqchar}, where $\xi_s(b)=\mu_{ql}^s$ and $\psi_t(a)=\mu_c^t$, with $\mu_{ql}$ and $\mu_c$ primitive $ql$-th and $c$-th roots of unity respectively.  Then
  $$\nu_m(V) = \left\{ \begin{array}{cll} 1&;& c\mid mt\wedge ql\mid ms\\
  0&;& c\nmid mt \vee ql\nmid ms \end{array} \right.$$
\end{thm}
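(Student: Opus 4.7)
The plan is to exploit the fact that for a $1$-dimensional character $\chi$, the indicator $\nu_m(\chi)$ reduces to a standard character-orthogonality computation. The first step is to compute $\La^{[m]}$ explicitly for the group algebra. Since $\Delta$ is the group-like coproduct on $\BC G$, we have $\Delta(g) = g\otimes g$ for all $g\in G$, and iterating combined with Lemma \ref{integrals}(i) gives
$$\La^{[m]} = \frac{1}{|G|}\sum_{g\in G} g^m.$$

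Next, since $\chi\colon G\to\BC^\times$ is a group homomorphism, $\chi(g^m) = \chi(g)^m = \chi^m(g)$, where $\chi^m$ is again a linear character of $G$. Therefore
$$\nu_m(\chi) = \chi\left(\La^{[m]}\right) = \frac{1}{|G|}\sum_{g\in G}\chi^m(g).$$
By Schur orthogonality applied to $\chi^m$ and the trivial character $1_{\hat G}$, this sum equals $|G|$ when $\chi^m = 1_{\hat G}$ and vanishes otherwise. In particular $\nu_m(\chi)\in\{0,1\}$, with $\nu_m(\chi)=1$ if and only if $\chi^m = 1_{\hat G}$.

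The final step is to translate the condition $\chi^m = 1_{\hat G}$ into the stated divisibility conditions. Because $\chi$ factors through the abelianization $G^{ab}\cong \BZ_c\times \BZ_{ql}$ (using that $[G,G]=\cyc{a^{n-1}}=\cyc{a^c}$), it is determined by its values on the generators $a$ and $b$. From $\chi(a)=\mu_c^t$ and $\chi(b)=\mu_{ql}^s$ we get $\chi^m(a) = \mu_c^{mt}$ and $\chi^m(b) = \mu_{ql}^{ms}$, so $\chi^m = 1_{\hat G}$ if and only if $c\mid mt$ and $ql\mid ms$. Combined with the previous step, this yields exactly the formula in the statement.

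There is no genuine obstacle here; the only mild care needed is in verifying that $\chi$ is well-defined on $G$ (which uses $c\mid n-1$, making the relation $bab^{-1}=a^n$ compatible with $\chi(a)=\mu_c^t$) and in justifying the passage from $\chi$ on the generators to $\chi$ on all of $G$ via the abelianization. Both points are immediate from Lemma \ref{kqcent} and Corollary \ref{kqchar}, so the remaining argument is purely a matter of assembling the orthogonality identity and reading off the conditions.
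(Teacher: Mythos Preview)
Your proposal is correct and follows essentially the same approach as the paper: both use that $\chi$ is multiplicative to rewrite $\nu_m(\chi)=\frac{1}{|G|}\sum_{g\in G}\chi^m(g)$, then apply character orthogonality to conclude the sum is $1$ or $0$ according to whether $\chi^m$ is trivial, and finally read off the divisibility conditions from the values on the generators $a$ and $b$. The only cosmetic difference is that the paper writes the sum explicitly over $a^ib^j$ and factors it as a product $\langle\xi_{ms},1\rangle\langle\psi_{mt},1\rangle$, whereas you phrase the same step via the abelianization $G^{ab}\cong\BZ_c\times\BZ_{ql}$.
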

\begin{proof}
Since linear characters are multiplicative, we have
  \begin{eqnarray*}
    \nu_m(V) &=& \frac{1}{kql}\sum_{i=0}^{k-1}\sum_{j=0}^{q-1} \chi((a^i b^j)^m)\\
    &=& \frac{1}{kql} \sum_{i=0}^{k-1}\sum_{j=0}^{q-1} \chi^m(a^i b^j)\\
    &=& \frac{1}{kql} \sum_{i=0}^{k-1}\sum_{j=0}^{q-1} \xi_{ms}(b^j)\psi_{mt}(a^i)\\
    &=& \ip{\xi_{ms}}{1}\ip{\psi_{mt}}{1}\\
    &=& \left\{ \begin{array}{cll} 1 &;& c\mid mt \wedge ql\mid ms\\
    0&;& c\nmid mt \vee ql\nmid ms \end{array}\right..
  \end{eqnarray*}
\end{proof}

To compute subsequent indicators, we will need to recall the value $d$ from equation (\ref{dvalue}).  By Lemma \ref{sdpident} part (iv) and its proof, this value will naturally arise whenever we consider the order of an element of the form $a^i b^j$ in $G$ for $j\not\equiv 0\bmod q$.  Recall in the remarks after Lemma \ref{minq} that while $d\equiv 0\bmod k/c$, neither $d\equiv 0\bmod k$ nor $d\equiv k/c\bmod k$ held in general.

We start with a straightforward lemma that we will need in many of our subsequent calculations.

\begin{lem}\label{lsplitlem}
  Let $G=\G$ be as in Definition \ref{presentation}.  Let $\phi_{r,s}\in\widehat{\cyc{a,b^q}}$ be as in Lemma \ref{kqnonlinchar}, and define $d$ as in equation (\ref{dvalue}).  Denote the restriction of $\phi_{r,s}$ to $\cyc{a,b^{q^2}}$ by $\widetilde{\phi}_{r,s}$.  Then for any $m\equiv 0\bmod q$ we have
  \begin{eqnarray*}
    \frac{1}{kql}\sum_{x\in\cyc{a}}\sum_{y\in\cyc{b}\setminus\cyc{b^q}} \phi_{r,s}^G(x^{\frac{m}{q}d}y^{m})
    &=& q\ip{\phi_{\frac{m}{q}dr,\frac{m}{q}s}}{1}-\ip{\widetilde{\phi}_{\frac{m}{q}dr,\frac{m}{q}s}}{1}\\
    &=& \left\{ \begin{array}{cll}
        q-1 &;& kq\mid mdr \wedge lq\mid ms\\
        0 &;& kq\nmid mdr \vee \left(lq\nmid ms \wedge \left(q\nmid l \vee l\nmid ms\right)\right)\\
        -1 &;& q\mid l \wedge kq\mid mdr \wedge l\mid ms \wedge lq\nmid ms
    \end{array}\right.
  \end{eqnarray*}
\end{lem}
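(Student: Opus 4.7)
The plan is to exploit a central-element reduction: under the given hypotheses, the element $x^{(m/q)d}y^m$ inside $\phi_{r,s}^G$ actually lies in $Z(G)$, which trivializes the induced-character formula and reduces the whole sum to an orthogonality calculation. Writing $M = m/q$, $x = a^i$, and $y = b^j$ with $q \nmid j$, the argument is $a^{Mdi}b^{jm}$. By Lemma \ref{minq}(iii), $k/c \mid d$, so $a^d \in \cyc{a^{k/c}} \subseteq Z(G)$ by Lemma \ref{kqcent}; hence $a^{Mdi} \in Z(G)$. Since $q \mid m$, we also have $b^{jm} \in \cyc{b^q} \subseteq Z(G)$. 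Thus the element is central in $G$ and in particular lies in $H$, and the standard formula for characters induced from a normal subgroup of index $q$ gives $\phi_{r,s}^G(a^{Mdi}b^{jm}) = q\,\phi_{r,s}(a^{Mdi}b^{jm}) = q\,\mu_k^{rMdi}\mu_l^{sjM}$, using $b^{jm} = (b^q)^{jM}$.

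Substituting this and cancelling the $q$, the double sum factors as
\begin{equation*}
\frac{1}{kl}\Bigl(\sum_{i=0}^{k-1}\mu_k^{rMdi}\Bigr)\Bigl(\sum_{\substack{0\le j<ql \\ q\nmid j}}\mu_l^{sjM}\Bigr).
\end{equation*}
The first factor is $k\,\mathbf{1}[k \mid rMd] = k\,\mathbf{1}[kq \mid mdr]$ by orthogonality. For the second, I would rewrite the constraint $q \nmid j$ as a complement: the full sum over $0 \le j < ql$ minus the sum over multiples of $q$. Two geometric-series evaluations give $ql\,\mathbf{1}[lq \mid ms]$ and $l\,\mathbf{1}[l \mid ms]$ respectively. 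The answer at this stage is
\begin{equation*}
\mathbf{1}[kq \mid mdr]\bigl(q\,\mathbf{1}[lq \mid ms] - \mathbf{1}[l \mid ms]\bigr),
\end{equation*}
and the two terms match $q\ip{\phi_{Mdr,Ms}}{1}$ and $\ip{\widetilde\phi_{Mdr,Ms}}{1}$ by direct evaluation over $H$ and $\widetilde H = \cyc{a, b^{q^2}}$: when $q \nmid l$, $\cyc{b^{q^2}} = \cyc{b^q}$, so $\widetilde H = H$ and the two divisibility conditions on $ms$ coincide; when $q \mid l$, $\widetilde H$ has index $q$ in $H$ and the conditions genuinely differ.

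All that remains is a short case split to match the piecewise formula. If $q \nmid l$, then $\gcd(q,l) = 1$ together with $q \mid m$ forces $l \mid ms \Longleftrightarrow lq \mid ms$, so the expression collapses to $(q-1)\,\mathbf{1}[kq \mid mdr]\,\mathbf{1}[lq \mid ms]$, giving $q-1$ or $0$. If $q \mid l$, the three possibilities for $(\mathbf{1}[lq \mid ms],\mathbf{1}[l \mid ms])$ are $(1,1)$, $(0,1)$, and $(0,0)$, producing $q-1$, $-1$, and $0$ respectively, exactly as claimed. The only genuinely tricky point I anticipate is spotting the central-element shortcut at the outset; once that is in place, the remainder is bookkeeping of divisibility conditions across the $q \mid l$ versus $q \nmid l$ dichotomy.
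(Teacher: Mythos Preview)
Your proof is correct and follows essentially the same complement-and-orthogonality route as the paper: split the sum over $y\in\cyc{b}\setminus\cyc{b^q}$ as the full sum minus the $\cyc{b^q}$-part, evaluate each piece as a character inner product, and then read off the three cases according to whether $q\mid l$. The one cosmetic difference is that you invoke centrality of $a^{(m/q)di}b^{jm}$ at the outset to collapse $\phi_{r,s}^G$ to $q\,\phi_{r,s}$ immediately, whereas the paper keeps $\phi_{r,s}^G$ and its $\sum_{u=0}^{q-1}\phi_{rn^u,s}$ expansion through the calculation and only at the end uses $dn^u\equiv d\bmod k$ (equivalent to your centrality observation) to kill the $u$-dependence. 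Your shortcut is a mild streamlining, not a different argument.
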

\begin{proof}
  The sum we wish to evaluate is clearly equal to
  \begin{eqnarray}\label{lsplit1}
    \frac{1}{kql}\sum_{x\in\cyc{a}}\left( \sum_{y\in\cyc{b}}\phi_{r,s}^G(x^{\frac{m}{q}d}y^{q\frac{m}{q}}) - \sum_{y\in\cyc{b^q}}\phi_{r,s}^G(x^{\frac{m}{q}d}y^{q\frac{m}{q}}) \right).
  \end{eqnarray}
  Then, since $\cyc{b^{q^2}}=\cyc{b^q} \iff q\nmid l$, we have
  \begin{eqnarray*}
    (\ref{lsplit1}) &=& \frac{1}{kql}\sum_{x\in\cyc{a}}\left(q\sum_{y\in\cyc{b^q}} \phi_{r,s}^G(x^{\frac{m}{q}d}y^{\frac{m}{q}}) - \gcd(q,l)\sum_{y\in\cyc{b^{q^2}}} \phi_{r,s}^G( x^{\frac{m}{q}d}y^{\frac{m}{q}})\right)\\
    &=& \sum_{u=0}^{q-1}\left(\ip{\phi_{\frac{m}{q}drn^u,\frac{m}{q}s}}{1}- \frac{1}{q}\ip{\widetilde{\phi}_{\frac{m}{q}drn^u,\frac{m}{q}s}}{1}\right)\\
    &=& q\ip{\phi_{\frac{m}{q}dr,\frac{m}{q}s}}{1}-\ip{\widetilde{\phi}_{\frac{m}{q}dr,\frac{m}{q}s}}{1},
  \end{eqnarray*}
  which gives the desired result.
\end{proof}
We note that the first equality above can be made valid with more general powers on $x$ and $y$.  We use this more specific version since it is what will appear in many of our remaining calculations.

\begin{thm}\label{kqnonlinear}
  Let $G=\G$ be as in Definition \ref{presentation}.  Let $V_{r,s}$ be an irreducible $q$-dimensional $G$-module with character $\phi_{r,s}^G$ as in Lemma \ref{kqnonlinchar}.
  \begin{enumerate}
  \item If $q\nmid l$, then
  $$\nu_m(V_{r,s}) = \left\{
  \begin{array}{cll}
    q   &;& q\mid m \wedge k\mid mr \wedge ql\mid ms\\
    q-1 &;& q\mid m \wedge k\nmid mr \wedge kq\mid mdr \wedge ql\mid ms\\
    1   &;& q\nmid m \wedge k\mid mr \wedge l\mid ms\\
    0   &;& k\nmid mr \vee \left(\left(q\nmid m\wedge l\nmid ms\right) \vee \left( q\mid m \wedge lq\nmid ms \right)\right)
  \end{array} \right..$$
  \item If $q\mid l$, then
  $$\nu_m(V_{r,s}) = \left\{
  \begin{array}{cll}
    q   &;& q\mid m \wedge k\mid mr \wedge lq\mid ms\\
    q-1 &;& q\mid m \wedge k\nmid mr \wedge kq\mid mdr \wedge lq\mid ms\\
    1   &;& q\nmid m \wedge k\mid mr \wedge l\mid ms\\
    \hspace{-9pt}-1  &;& q\mid m \wedge k\nmid mr \wedge kq\mid mdr \wedge l\mid ms \wedge lq\nmid ms\\
    0   &;& \mbox{otherwise}
  \end{array} \right.$$
  \end{enumerate}
\end{thm}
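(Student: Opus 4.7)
The plan is to compute $\nu_m(V_{r,s}) = \frac{1}{|G|}\sum_{g\in G}\phi_{r,s}^G(g^m)$ by parametrizing $g=a^ib^j$ and analyzing $(a^ib^j)^m$ via Lemma \ref{sdpident}(iv). The key observation from Lemma \ref{kqnonlinchar} is that $\phi_{r,s}^G(a^u b^v)=0$ whenever $q\nmid v$, so in the sum only those $(i,j)$ with $q\mid mj$ contribute. Since $q$ is prime, this splits naturally into two regimes: either $q\nmid m$ (forcing $q\mid j$), or $q\mid m$ (all $j$ survive, but with different formulas for the $m$-th power depending on whether $q\mid j$).

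First I would handle the easy pieces. When $q\mid j$, then $b^j\in Z(G)$ (modulo $\langle b^q\rangle$) commutes with $a$, so $(a^ib^j)^m=a^{mi}b^{mj}$. When $q\mid m$ and $q\nmid j$, Lemma \ref{sdpident}(iv) gives $(a^ib^j)^q = a^{i\sum_{r=0}^{q-1}n^{jr}}b^{qj} = a^{di}b^{qj}$ (the sum runs over all $q$-th roots of unity residues since $n^j$ has order $q$), and then since $a^d\in Z(G)$ by Lemma \ref{minq}(iii), $(a^ib^j)^m=a^{(m/q)di}b^{mj}$. Feeding these into the induced character formula and performing the $i,j$ sums gives orthogonality conditions.

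Next I would package the two contributions. The $q\mid j$ part becomes $\frac{1}{kql}\sum_i\sum_{j'}\phi_{r,s}^G(a^{mi}b^{mqj'})$, which by the orthogonality of characters on $\langle a\rangle\cong\BZ_k$ and $\langle b^q\rangle\cong\BZ_l$ evaluates to $1$ if $k\mid mr$ and $l\mid ms$, and $0$ otherwise. The $q\nmid j$ part (present only when $q\mid m$) is precisely $\frac{1}{kql}\sum_{x\in\langle a\rangle}\sum_{y\in\langle b\rangle\setminus\langle b^q\rangle}\phi_{r,s}^G(x^{(m/q)d}y^m)$, so Lemma \ref{lsplitlem} applies directly and yields either $q-1$, $0$, or $-1$ according to the stated congruences on $mdr$ and $ms$.

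The last step is the case-analysis merge. The critical input is Corollary \ref{classcor2}: if $q\mid m$ and $k\mid mr$, then $kq\mid mdr$ automatically, which is why the $k\mid mr$ branch always coincides with a subcase of the $kq\mid mdr$ branch and the two contributions add rather than conflict. One then sorts through the subcases: (A) $q\mid m$ with $k\mid mr$ gives $1+(q-1)=q$ or $1+(-1)=0$ depending on whether $lq\mid ms$ or only $l\mid ms$ (with $q\mid l$); (B) $q\mid m$ with $k\nmid mr$ but $kq\mid mdr$ contributes only via Lemma \ref{lsplitlem}; (C) $q\nmid m$ reduces to the $q\mid j$ part alone, giving $1$ or $0$. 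The $q\nmid l$ case (part (i)) collapses further because when $q\nmid l$, $q\mid m$, and $l\mid ms$, coprimality forces $lq\mid ms$, eliminating the $-1$ branch.

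The main obstacle is bookkeeping, not any deep argument: carefully separating whether the congruences on $ms$ distinguish $l\mid ms$ from $lq\mid ms$ (which hinges on $\gcd(q,l)$), and verifying that the Lemma \ref{lsplitlem} trichotomy combines with the $q\mid j$ contribution to reproduce the four-branch formula in each of parts (i) and (ii) without overlap or omission. The use of Corollary \ref{classcor2} to ensure $k\mid mr\Rightarrow kq\mid mdr$ is what makes the cases consistent, and should be flagged explicitly at the merge step.
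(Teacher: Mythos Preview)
Your proposal is correct and follows essentially the same approach as the paper: split according to whether $q\mid m$, compute $(a^ib^j)^m$ via Lemma \ref{sdpident}(iv) in each regime, separate the $q\mid j$ and $q\nmid j$ contributions, invoke Lemma \ref{lsplitlem} for the latter, and merge using Corollary \ref{classcor2}. The paper's proof is exactly this argument, including the final appeal to Corollary \ref{classcor2} to reconcile the $k\mid mr$ and $kq\mid mdr$ branches.
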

\begin{proof}
  First we consider the case $q\nmid m$.  Then $(a^i b^j)^m\not\in\cyc{a,b^q}$ $\iff$ $j\not\equiv 0\bmod q$, meaning $\phi_{r,s}^G((a^i b^j)^m)= 0$.  Therefore, since $\phi_{r,s}$ is a product of multiplicative characters, we have
  \begin{eqnarray*}
    \nu_m(V_{r,s}) &=& \frac{1}{kql} \sum_{g\in G}\phi_{r,s}^G(g^m)\\
    &=& \frac{1}{kql}\sum_{j=0}^{q-1}\sum_{i=0}^{k-1}\phi_{r,s}^G((a^i b^{qj})^m)\\
    &=& \frac{1}{kql}\sum_{u=0}^{q-1}\sum_{j=0}^{l-1}\sum_{i=0}^{k-1}\phi_{r,s}((a^{n^u i}b^{qj})^m)\\
    &=& \frac{1}{q}\sum_{u=0}^{q-1}\ip{\phi_{mrn^u,ms}}{1}\\
    &=& \ip{\phi_{mr,ms}}{1}\\
    &=& \left\{ \begin{array}{cll} 1&;& k\mid mr \wedge l\mid ms\\
    0&;& k\nmid mr \vee l\nmid ms \end{array}\right..
  \end{eqnarray*}

  Now assume $q\mid m$.  Then if $j\not\equiv 0\bmod q$ we have
  $$(a^i b^j)^m = a^{sd\frac{m}{q}}b^{mt} = a^{sd\frac{m}{q}}b^{q\frac{m}{q}t}.$$
  On the other hand, if $j\equiv 0\bmod q$ we have
  $$(a^i b^j)^m = a^{mi}b^{mj}.$$
  Thus, using Lemma \ref{lsplitlem}, and the  definition for $\widetilde{\phi}_{r,s}$ given there, we have
  \begin{eqnarray}
    \nu_m(V_{r,s}) &=& \frac{1}{kql} \sum_{x\in\cyc{a}}\sum_{y\in\cyc{b}} \phi_{r,s}^G((xy)^m)\nonumber\\
    &=& \frac{1}{kql}\sum_{x\in\cyc{a}}\left( \sum_{y\in\cyc{b^q}} \phi_{r,s}^G(x^m y^m) + \sum_{y\in\cyc{b}\setminus\cyc{b^q}} \phi_{r,s}^G(x^{\frac{m}{q}d}y^{q\frac{m}{q}})\right)\nonumber\\
    &=& \frac{1}{q}\sum_{u=0}^{q-1}\ip{\phi_{mrn^u,ms}}{1} + q\ip{\phi_{\frac{m}{q}dr,\frac{m}{q}s}}{1}-\ip{\widetilde{\phi}_{\frac{m}{q}dr,\frac{m}{q}s}}{1}\nonumber\\
    &=& \ip{\phi_{mr,ms}}{1} + q\ip{\phi_{\frac{m}{q}dr,\frac{m}{q}s}}{1}-\ip{\widetilde{\phi}_{\frac{m}{q}dr,\frac{m}{q}s}}{1}
  \end{eqnarray}
  Combining cases, applying Corollary \ref{classcor2} to simplify expressions, and using a few standard logical manipulations gives the desired formulas.
\end{proof}

This completes the calculation of the indicators for non-abelian groups of the form $\G$.  In summary, we have the following two results:

\begin{thm}\label{kqindsummary}
  Let $G\cong\G$ be as in Definition \ref{presentation}, and let $V$ be any irreducible $G$-module.  Then the indicators of $V$ satisfy $\nu_m(V)\in \{-1,0,1,q-1,q\}$ for all $m\in\BN$.  If $q\nmid l$, we further have $\nu_m(V)\in\{0,1,q-1,q\}$.
\end{thm}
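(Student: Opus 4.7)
The plan is to observe that this theorem is a direct consolidation of the two preceding theorems \ref{kq1dim} and \ref{kqnonlinear}, combined with the complete classification of irreducibles given in Corollary \ref{kqchar}. So the proof is essentially a case-check, with no new computation needed.

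First I would invoke Corollary \ref{kqchar} to split into exactly two cases: $V$ is either a $1$-dimensional module (of which there are $cql$) or a $q$-dimensional irreducible induced module $V_{r,s}$ of the form considered in Lemma \ref{kqnonlinchar}. This dichotomy exhausts all irreducible $G$-modules.

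For the $1$-dimensional case, I would simply read off Theorem \ref{kq1dim}: the value of $\nu_m(V)$ is either $0$ or $1$, which lies in both of the claimed sets $\{-1,0,1,q-1,q\}$ and $\{0,1,q-1,q\}$ regardless of whether $q \mid l$. For the $q$-dimensional case, I would read off Theorem \ref{kqnonlinear}: part (i) shows that when $q \nmid l$ the indicator lies in $\{0,1,q-1,q\}$, and part (ii) shows that when $q \mid l$ the indicator lies in $\{-1,0,1,q-1,q\}$ (with $-1$ genuinely possible only in the regime of the fourth clause of part (ii)).

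Combining the two cases yields the first claim $\nu_m(V) \in \{-1,0,1,q-1,q\}$ for arbitrary $l$, and the sharper claim $\nu_m(V) \in \{0,1,q-1,q\}$ when $q \nmid l$. Since nothing in this deduction requires any additional computation beyond consulting the tables produced in the previous two theorems, there is no real obstacle — the only thing to be mildly careful about is noting that when $q = 2$ the value $q - 1 = 1$ collapses into an existing listed value, and when $q = 2$ and $q \mid l$ the potentially negative indicator $-1$ coincides with the general listing; but neither of these coincidences causes the stated set to fail to contain $\nu_m(V)$. Thus the theorem follows immediately.
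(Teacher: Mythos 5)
Your proposal is correct and is exactly how the paper treats this statement: the theorem is stated as a summary with no separate argument, its content being precisely the case-check you describe (Corollary \ref{kqchar} to exhaust the irreducibles, then reading off the values from Theorems \ref{kq1dim} and \ref{kqnonlinear}). Nothing further is needed.
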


\begin{thm}\label{kqortsummary}
  Let $G\cong\G$ be as in Definition \ref{presentation}.  Then $G$ is totally orthogonal $\iff$ $G$ is isomorphic to a dihedral group.
\end{thm}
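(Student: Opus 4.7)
The plan is to apply the explicit formulas in Theorems \ref{kq1dim} and \ref{kqnonlinear} at $m=2$ and match the requirement $\nu_2(V)=1$ for every irreducible $V$ against the parameters $(k,q,n,l)$. The strategy is to first constrain $q$ and $l$ by running through the one-dimensional characters, then constrain $n$ by running through the $q$-dimensional ones.

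For the forward direction, assume $G$ is totally orthogonal. Theorem \ref{kq1dim} with $m=2$ applied to the linear character $\xi_s\otimes\psi_t$ requires $c\mid 2t$ and $ql\mid 2s$ for every $s\in\{0,\ldots,ql-1\}$ and $t\in\{0,\ldots,c-1\}$; taking $s=t=1$ forces $c\mid 2$ and $ql\mid 2$, and since $q$ is prime with $q\mid ql$ we obtain $q=2$ and $l=1$. With these values in hand, I would then apply Theorem \ref{kqnonlinear}(i), which is applicable since $q\nmid l$. The first branch $\nu_2=q=2$ demands $k\mid 2r$, but this is incompatible with the irreducibility constraint $r\not\equiv 0\bmod k/c$: if $c=2$ then $k\mid 2r\Leftrightarrow r\equiv 0\bmod k/2$, excluded, and if $c=1$ then $k$ must be odd (since $n$ is a unit modulo $k$ forces $n-1$ even whenever $k$ is even), so $k\mid 2r\Leftrightarrow k\mid r$, also excluded. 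Therefore every non-linear irreducible $V_{r,0}$ must land in the $\nu_2=q-1=1$ branch, which requires $k\mid dr$. Taking $r=1$, which is valid since $n\not\equiv 1\bmod k$ forces $c<k$ and hence $k/c\ge 2$, gives $k\mid d=1+n$, so $n\equiv -1\bmod k$. Combined with $q=2$, $l=1$, and $k\ge 3$ (forced by $n\not\equiv 1$), this is exactly the dihedral presentation.

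For the converse, assume $(q,l,n,k)=(2,1,-1,k)$ with $k\ge 3$; then $d=1+n=0$ and $c=\gcd(2,k)\in\{1,2\}$. The linear-character condition reduces to $c\mid 2t$ and $2\mid 2s$, both automatic. For the non-linear characters, Theorem \ref{kqnonlinear}(i) at $m=2$ shows the $\nu_2=2$ branch never applies by the same exclusion as above, while the $\nu_2=1$ branch reduces to $k\nmid 2r$ and $k\mid dr$; the second condition is automatic since $d=0$, and the first holds for every valid $r$ because $r$ ranges over $\{1,\ldots,k-1\}\setminus\{k/2\}$ (with the second set omitted only when $c=2$). Thus every irreducible has $\nu_2=1$, and $G$ is totally orthogonal. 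The main obstacle is the bookkeeping in the third paragraph, particularly verifying that the $\nu_2=q$ branch of Theorem \ref{kqnonlinear}(i) is genuinely incompatible with irreducibility; without this step, one could not conclude that total orthogonality forces the constraint $k\mid d$, which is what ultimately pins $n$ down to $-1$.
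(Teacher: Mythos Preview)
Your argument is correct and, in the forward direction, is actually more direct than the paper's. Both proofs begin identically: evaluate Theorem \ref{kq1dim} at $m=2$, $s=t=1$ to force $q=2$, $l=1$, and $c\le 2$. From there the paper invokes the structure of involutions in $\Aut(\BZ_k)$: writing $k=2^s x$ with $x$ odd, the constraint $c\le 2$ together with $n^2\equiv 1$ leaves exactly two possibilities modulo $2^s$, namely $n\equiv -1$ (dihedral) or $n\equiv 2^{s-1}-1$ with $s\ge 3$ (semidihedral type), and the paper then plugs each into Theorem \ref{kqnonlinear} to eliminate the second. You bypass this case analysis entirely by taking $r=1$ directly in Theorem \ref{kqnonlinear}(i), which forces $k\mid d=n+1$ in one stroke. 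Your observation that the $\nu_2=q$ branch is incompatible with the irreducibility condition $r\not\equiv 0\bmod k/c$ (splitting on $c=1$ versus $c=2$) is exactly what makes this shortcut work, and your justification that $c=1$ forces $k$ odd is the right ingredient. The paper's route has the minor expository advantage of exhibiting the semidihedral-type family as the ``near miss,'' but your argument is shorter and avoids any appeal to the structure of $\Aut(\BZ_{2^s})$. The converse directions are essentially the same in both proofs.
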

\begin{proof}
  By Theorem \ref{kq1dim}, for $G$ to be totally orthogonal we must have $|Z(G)|\leq 2$ and $ql\leq2$.  Since $q$ is a prime, we thus must have $l=1$ and $q=2$.  Factoring $k=2^s x$ for some $s\geq 0$ and $x$ odd, the conditions $|Z(G)|\leq 2$ and $q=2$ force one of two possibilities:
  \begin{eqnarray}
    n\equiv -1 \bmod x&&n\equiv -1\bmod 2^s\\
    n\equiv -1 \bmod x&&n\equiv 2^{s-1}-1\bmod 2^s; \ s\geq 3.
  \end{eqnarray}
  The condition $n\equiv -1\bmod 2^s$ gives a dihedral group, which has $d=0$.  Checking Theorems \ref{kqnonlinear} and \ref{kq1dim}, we see that the dihedral groups are totally orthogonal (this is well-known).  If $n\equiv 2^{s-1}-1\bmod 2^s$, then we have $d\equiv k/2\bmod k$.  Again checking Theorems \ref{kqnonlinear} and \ref{kq1dim}, we find that $G$ would be totally orthogonal if and only if $2\mid j$ for every $j\not\equiv 0\bmod k/2$.  Since $s\geq 3$ forces $8\mid k$, this is impossible.
\end{proof}

%------------------------Indicators for doubles of sdp's of Z_k by Z_ql--------------------------------------

\section{Indicators for $\D(\G)$}\label{kqdoubsect}
We continue to let $G=\G$ be as in Definition \ref{presentation}.  We wish now to compute the indicators for the irreducible modules over $\D(G)$.  We use the notation of Proposition \ref{MOrho} to denote the irreducible modules over $\D(G)$.  By Proposition \ref{FxF}, the indicators for a module $M(g,\rho)$ with $g\in Z(G)$ are entirely determined by the indicators of the $G$-module given by $\rho$, which we have already computed in the previous section.  Thus we will subsequently focus on the indicators of modules corresponding to non-singleton conjugacy classes.

\subsection{The sets $G_m(x)$}
By Corollary \ref{cor34}, we will need to compute the sets $G_m(x)$ from Definition \ref{Gmsetdef}.

\begin{prop}\label{kqsets} Let $G=\G$ be a group as in Definition \ref{presentation}. Let $i,s,u,v\in\BZ$.  Let $d$ be defined as in equation (\ref{dvalue}).  Then the following hold:
\begin{enumerate}
  \item If $q\mid v$ and $q\mid u$, then $$a^sb^v\in G_m(a^ib^u) \iff ql\mid mu \wedge k\mid mi$$
  \item If $q\nmid v$ and $q\mid u$, then $$a^s b^v\in G_m(a^ib^u) \iff ql\mid mu \wedge \left(( q\mid m \wedge kq \mid mdi ) \vee \left(q\nmid m \wedge i\sum_{h=0}^{m-1}n^{-hv}\equiv 0\bmod k\right)\right)$$
  \item If $q\mid v$ and $q\nmid u$, then $$a^sb^v\in G_m(a^i b^u) \iff ql\mid mu \wedge \left( \frac{m}{q}di + s\left(m - \frac{m}{q}d\right)\right) \equiv 0\bmod k$$
  \item If $u\not\equiv v\bmod q$, $q\nmid u$, and $q\nmid v$, then $$a^s b^v \in G_m(a^i b^u) \iff ql\mid mu \wedge kq \mid mdi$$
  \item If $q\nmid u$ and $v\equiv u\bmod q$, then $$a^s b^v \in G_m(a^i b^u) \iff ql\mid mu \wedge \left( mi + s\left(\frac{m}{q}d-m\right)\right)\equiv 0\bmod k$$
\end{enumerate}
\end{prop}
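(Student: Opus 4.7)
The plan is to compute the product
\[
P := \prod_{j=0}^{m-1}(a^s b^v)^{-j}(a^i b^u)(a^s b^v)^j
\]
in the normal form $a^X b^Y$ using Lemma \ref{sdpident}, after which $P = 1$ becomes the pair of congruences $X \equiv 0 \pmod k$ and $Y \equiv 0 \pmod{ql}$. Setting $y = a^s b^v$ and $S_j := s\sum_{r=0}^{j-1} n^{vr}$, Lemma \ref{sdpident}.iv gives $y^j = a^{S_j} b^{jv}$, and parts (ii) and (iii) of the same lemma let us commute $b$'s past $a$'s to obtain
\[
y^{-j}(a^i b^u) y^j = a^{A_j} b^u, \qquad A_j := n^{-jv}\bigl(i + S_j(n^u - 1)\bigr).
\]
Iterating $a^{X_1} b^u \cdot a^{X_2} b^u = a^{X_1 + X_2 n^u} b^{2u}$ across the $m$-fold product collapses $P$ to $a^{\Sigma}\, b^{mu}$, where $\Sigma := \sum_{j=0}^{m-1} A_j n^{ju}$. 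Hence the $b$-part always yields the condition $ql \mid mu$, and the substance of the five cases is the evaluation of $\Sigma \bmod k$.

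I then analyze $\Sigma$ by which of $u$, $v$ are divisible by $q$, using two recurring simplifications: since $n$ has order exactly $q$ in $(\BZ/k)^*$, we have $n^e \equiv 1 \pmod k$ iff $q \mid e$, and, by the reindexing used in Lemma \ref{minq}, $\sum_{j=0}^{m-1} n^{je} \equiv \frac{m}{q}d \pmod k$ whenever $q \nmid e$ and $q \mid m$. Case (i) is immediate: $A_j \equiv i$ and $\Sigma \equiv mi$. In case (ii), $n^u - 1 \equiv 0$ kills the $S_j$ contribution and $\Sigma \equiv i \sum_{h=0}^{m-1} n^{-hv}$, which evaluates to $i\cdot \frac{m}{q}d$ when $q \mid m$ and is left unsimplified otherwise. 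In case (iii) we have $S_j \equiv js$, so $\Sigma = i\sum n^{ju} + s(n^u - 1)\sum j n^{ju}$; the standard identity $(n^u-1)\sum_{j=0}^{m-1} j n^{ju} = (m-1)n^{mu} + 1 - \sum n^{ju}$ reduces the second sum modulo $k$ to $m - \frac{m}{q}d$. Case (v), where $u \equiv v \pmod q$ forces $n^{u-v} \equiv 1$, simplifies $\Sigma$ to $mi + (n^u - 1)\sum S_j$; the integer identity $(n^v - 1)\sum S_j = s(\sum n^{jv} - m)$ together with $n^u - n^v \equiv 0 \pmod k$ then yields the stated formula.

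Case (iv) is the technical heart. Writing $w := u - v$ with $q \nmid w$, one has $\Sigma = i\sum n^{jw} + (n^u - 1)\sum S_j n^{jw}$, and with $q \mid m$ the first summand is $i\cdot\frac{m}{q}d$. For the second, the identity $(n^v - 1)S_j n^{jw} = s(n^{ju} - n^{jw})$ combined with $\sum n^{ju} \equiv \sum n^{jw} \pmod k$ gives $(n^v - 1)\sum S_j n^{jw} \equiv 0 \pmod k$, while a parallel index-swap computation yields $(n^w - 1)\sum S_j n^{jw} \equiv -s\cdot\frac{m}{q}d\cdot(n^w - 1) \pmod k$. Combining these via the decomposition $n^u - 1 = n^v(n^w - 1) + (n^v - 1)$ reduces $(n^u - 1)\sum S_j n^{jw}$ modulo $k$ to $-s n^v \cdot \frac{m}{q}d \cdot (n^w - 1)$. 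The main obstacle is recognizing that this \emph{vanishes} modulo $k$: this is precisely where the prime-order assumption on $n$ enters, since Lemma \ref{minq}.iii gives $\frac{k}{c}\mid d$ and Lemma \ref{minq}.ii gives $c \mid n^w - 1$, so $\frac{m}{q}d\cdot(n^w - 1) \equiv 0 \pmod k$. After this cancellation $\Sigma \equiv i\cdot\frac{m}{q}d \pmod k$, matching the stated condition $kq \mid m d i$.
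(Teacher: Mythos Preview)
Your proof is correct and follows the same overall plan as the paper: compute the conjugation product in the normal form $a^{\Sigma}b^{mu}$, obtaining exactly the same exponent $\Sigma = \sum_{j} n^{j(u-v)}\bigl(i + s(n^u-1)\sum_{r<j} n^{rv}\bigr)$, and then split into the five cases according to the residues of $u,v$ modulo $q$. Cases (i)--(iii) and (v) match the paper's argument essentially line for line (you use the telescoping identity $(x-1)\sum_{j<m} jx^j = (m-1)x^m + 1 - \sum_{j<m} x^j$ where the paper obtains the equivalent formula via formal differentiation of $\sum x^j$).

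The only genuine point of divergence is case (iv). The paper treats (iv) and (v) simultaneously by choosing $\alpha$ with $\alpha v \equiv u \pmod q$ and using the integer identity $(n^u-1)\sum_{r<h} n^{rv} = \bigl(\sum_{p<\alpha} n^{pv}\bigr)(n^{hv}-1)$; this replaces the troublesome factor $n^u-1$ by a polynomial in $n^v$, after which the second summand in $\Sigma$ collapses immediately to $s\bigl(\sum_p n^{pv}\bigr)\bigl(\sum_h n^{hv} - \sum_h n^{h(u-v)}\bigr)$, which vanishes when $u\not\equiv v$ and gives the stated formula when $u\equiv v$. Your route instead decomposes $n^u - 1 = n^v(n^w-1) + (n^v-1)$ additively and handles the two pieces separately, ultimately invoking $\tfrac{k}{c}\mid d$ and $c\mid n^w-1$ (Lemma~\ref{minq}) to kill the residual term $-s n^v\tfrac{m}{q}d(n^w-1)$. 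Both arguments are valid; the paper's is slightly more uniform across (iv)/(v), while yours makes the role of the structural fact $k\mid d(n^w-1)$ explicit.
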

\begin{proof}To determine when $a^s b^t\in G_m(a^i b^j)$ for any $i,j,s,t\in\BN$, we wish to determine when the following identity holds:
  \begin{eqnarray}\label{gmelement}
    \prod_{h=0}^{m-1} (a^s b^t)^{-h} a^i b^j (a^s b^t)^h&=& \prod_{h=0}^{m-1} b^{-ht} a^{-s \sum_{p=0}^{h-1} n^{pt}} a^i b^j a^{s\sum_{p=0}^{h-1} n^{pt}} b^{ht}\nonumber\\
    &=& \prod_{h=0}^{m-1} b^{-ht} a^i a^{s(n^j -1)\sum_{p=0}^{h-1}n^{pt}} b^{ht} b^j\nonumber\\
    &=& \prod_{h=0}^{m-1} a^{n^{-ht}(i + s(n^j-1)\sum_{p=0}^{h-1}n^{pt})} b^j\nonumber\\
    &=& a^{\sum_{h=0}^{m-1}\left(n^{h(j-t)}(i+ s (n^j-1)\sum_{p=0}^{h-1} n^{pt})\right)} b^{mj}\\
    &=& 1.\nonumber
  \end{eqnarray}
  Thus for the identity to hold we must have $mj\equiv 0\bmod ql$, and in particular that $q\mid m$ or $q\mid j$.  We assume for the rest of the proof that $mj\equiv 0\bmod ql$.

  If $q\mid j$, then we are in the cases of i) and ii), and (\ref{gmelement}) equals $a^{i\sum_{h=0}^{m-1} n^{-ht}}$.  So for this element to equal 1 we must have $i \sum_{h=0}^{m-1}n^{-ht}\equiv 0\bmod k$.  If $q\mid t$, this is equivalent to $mi\equiv 0\bmod k$.  Else if $q\mid m$ it is equivalent to $mdi/q\equiv 0\bmod k$.  This proves i) and ii).

  On the other hand, suppose $q\nmid j$.  Then for (\ref{gmelement}) to be the identity we must have $q\mid m$ and that
  $$\sum_{h=0}^{m-1}\left(n^{h(j-t)}\left(i+ s (n^j-1)\sum_{p=0}^{h-1} n^{pt}\right)\right) \equiv 0\bmod k.$$
  If $q\mid t$, this is equivalent to
    \begin{eqnarray*}
      \sum_{h=0}^{m-1}\left(n^{hj}\left(i+ sh (n^j-1)\right)\right) &\equiv& i\sum_{h=0}^{m-1}n^{hj} + s(n^j-1)\sum_{h=0}^{m-1} hn^{hj}\\
      &\equiv& \frac{mdi}{q} + s(n^j-1)\sum_{h=0}^{m-1} hn^{hj}\\
      &\equiv& 0\bmod k.
    \end{eqnarray*}
  To interpret the last summation modulo $k$, we first consider the functions
  \begin{eqnarray}
    f_m(x) = \frac{1}{\ln(n)} \sum_{h=0}^{m-1} n^{hx} = \frac{1}{\ln(n)}\frac{n^{mx}-1}{n^x-1}.
  \end{eqnarray}
  Taking the derivative of all sides with respect to $x$, we get
  $$f_m'(x) = \sum_{h=0}^{m-1} h n^{hx} = \frac{ m n^{mx}(n^x-1) - n^x(n^{mx}-1)}{(n^x-1)^2}.$$
  In order to get a sensible closed form for $(n^j-1)f_m'(x)$ modulo $k$, we rewrite the last identity to get
  \begin{eqnarray}
    \sum_{h=0}^{m-1} h n^{hx} = \frac{m n^{mx}}{n^x-1} - \frac{n^x}{n^x-1}\sum_{h=0}^{m-1}n^{hx}.
  \end{eqnarray}
  We thus conclude that
  \begin{eqnarray*}
    (n^j-1)\sum_{h=0}^{m-1} h n^{hj} &=& m n^{mj} - n^j \sum_{h=0}^{m-1}n^{hj}\\
    &=& m n^{mj} - \sum_{h=0}^{m-1}n^{(h+1)j}\\
    &=& m n^{mj} - \sum_{h=1}^{m}n^{hj}
  \end{eqnarray*}
  as integers,  thus as equivalence classes modulo $k$.  Since $q\mid m$ and by assumptions on $n$, $n^{qm}\equiv 1\bmod k$.  So when $j\not\equiv 0\bmod q$ and $q\mid v$, it follows that $a^s b^v\in G_m(a^i b^j)$ if and only if
  $$\frac{md}{q}i + s\left(m-\frac{m}{q}d\right)\equiv 0\bmod k,$$
  which proves iii).

  To complete the proof, we now need to consider the case $q\nmid j$ and $q\nmid t$.  As before, we conclude that $q\mid m$ if the element in (\ref{gmelement}) is equal to the identity.  By assumptions on $j,t$, there is an integer $0<u_j^t<q$ with $u_j^t t\equiv j\bmod q$.  Then, over the rational numbers we have
  \begin{eqnarray*}
    (n^j-1)\sum_{p=0}^{h-1}n^{pt} &=& \frac{n^j-1}{n^t-1}(n^{ht}-1)\\
    &=& \frac{n^{u_j^t t}-1}{n^t-1}(n^{ht}-1)\\
    &=& \left(\sum_{p=0}^{u_j^t-1}n^{pt}\right)(n^{ht}-1).
  \end{eqnarray*}
  This last equality clearly holds over the integers, and therefore as equivalence classes modulo $k$.  Subsequently, for (\ref{gmelement}) to be the identity, we must have
  $$i\sum_{h=0}^{m-1}n^{h(j-t)} + s \left(\sum_{p=0}^{u_j^t-1}n^{pt}\right)\left( \sum_{h=0}^{m-1}n^{ht} - \sum_{h=0}^{m-1}n^{h(j-t)}\right)\equiv 0\bmod k.$$
  For $j\equiv t\bmod q$, we have $u_j^t=1$ and subsequently that this condition is equivalent to
  $$mi + s(md/q-m)\equiv 0\bmod k.$$
  When $j\not\equiv t\bmod q$, the congruence is equivalent to $imd/q \equiv 0\bmod k$.  This proves iv) and v).

  This completes all possible cases, and establishes the desired claims.
\end{proof}

The last part of ii) is the best congruence possible, in the following sense.  Taking $k=10^5-1, q=5, n=10, v=4, u=0, m=3$, we get that
$$\sum_{h=0}^2 n^{-vh} = \sum_{h=0}^2 n^h = 1 + 10 + 10^2 = 111,$$
and $\gcd(111,10^5-1)=3$.  Thus there are solutions other than $i\equiv 0\bmod k$.

Combined with Corollary \ref{indicforms}, the following corollary establishes the claim following Lemma \ref{kqnormalize} that we do not need to know the full character theory of the subgroups $C_G(a^i b^j)$ for $j\not\equiv 0\bmod q$, but only those of $Z(G)$.

\begin{cor}\label{kqsetcent}
  Under the hypotheses of Proposition \ref{kqsets}, if $q\nmid u$ and $g\in G_m(a^i b^u)$, then $g^m\in Z(G)$.
\end{cor}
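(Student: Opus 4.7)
The approach is to write $g^m$ explicitly using Lemma~\ref{sdpident}(iv) and then read off membership in $Z(G)=\cyc{a^{k/c},b^q}$ via the description of the center given in Lemma~\ref{kqcent}. Writing $g=a^s b^v$, Lemma~\ref{sdpident}(iv) produces
$$g^m \;=\; a^{\,s\sum_{r=0}^{m-1} n^{vr}}\, b^{mv},$$
so the job reduces to verifying the two divisibility conditions $q\mid mv$ on the $b$-exponent and $\tfrac{k}{c}\mid s\sum_{r=0}^{m-1}n^{vr}$ on the $a$-exponent.

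The first step is to observe that the hypothesis $q\nmid u$ rules out parts (i) and (ii) of Proposition~\ref{kqsets}, leaving only parts (iii), (iv), and (v) as possibilities. In each of these remaining cases the required divisibility $ql\mid mu$, combined with $q\nmid u$ and the primality of $q$, forces $q\mid m$; this immediately handles the $b$-exponent and reduces all remaining work to the $a$-exponent. The second step is to collapse $\sum_{r=0}^{m-1} n^{vr}$ modulo $k$ in the two surviving geometric flavors: when $q\mid v$ (part (iii)) each summand is $\equiv 1\bmod k$ and the sum becomes $m$; when $q\nmid v$ (parts (iv) and (v)), since $q\mid m$ and $n^q\equiv 1\bmod k$, the residues $\{vr\bmod q\}$ sweep out each class in $\BZ/q\BZ$ exactly $m/q$ times, so the sum collapses to $\tfrac{m}{q}d\bmod k$. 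The third step is to read off the conclusion: in parts (iv) and (v) the $a$-exponent becomes an integer multiple of $\tfrac{m}{q}d$, which is divisible by $k/c$ by Lemma~\ref{minq}(iii); in part (iii) one reduces the explicit membership relation $\tfrac{m}{q}di+s(m-\tfrac{m}{q}d)\equiv 0\bmod k$ modulo $k/c$, and since both $\tfrac{m}{q}di$ and $s\tfrac{m}{q}d$ vanish modulo $k/c$ (again by Lemma~\ref{minq}(iii)), the surviving term $sm$ must itself vanish modulo $k/c$.

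I do not expect any real obstacle here. The only place the genuine membership relation from Proposition~\ref{kqsets} is essentially invoked is case (iii); in the other two cases the conclusion is automatic once one knows $q\mid m$ and $k/c\mid d$. Everything else is a case-by-case collapse of the geometric-type sum under the three surviving hypothesis configurations, all guided by the central divisibility $k/c\mid d$ already established in Lemma~\ref{minq}(iii).
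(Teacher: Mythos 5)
Your proof is correct and follows essentially the same route as the paper: both arguments extract $q\mid m$ from $ql\mid mu$, split on whether $q$ divides the $b$-exponent of $g$, collapse the power formula of Lemma \ref{sdpident}(iv) to $a^{s\frac{m}{q}d}b^{mv}$ (the paper phrases this as $g^q=a^{ds}b^{qv}$ raised to $m/q$) and use $k/c\mid d$, invoking the explicit condition of Proposition \ref{kqsets}(iii) only in the $q\mid v$ case. The only difference is cosmetic: you reduce that condition modulo $k/c$, while the paper rearranges it to $sm\equiv\frac{m}{q}d(s-i)\bmod k$.
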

\begin{proof}
  We consider each of the cases iii), iv), and v) from Proposition \ref{kqsets}.  Suppose $g\in G_m(a^i b^u)$.  Then we must have $ql\mid mu$, and in particular $q\mid m$.  If $g=a^s b^v$ for any $v\not\equiv 0\bmod q$, then $g^q = a^{ds}b^{qv}$.  By Lemma \ref{minq} and \ref{kqcent}, $a^{ds}\in Z(G)$ and $b^{qv}\in Z(G)$, so we conclude that $g^q\in Z(G)$, and thus $g^m\in Z(G)$.  On the other hand, if $g=a^s b^v$ with $q\mid v$, then rearranging the second condition of iii) we find that $sm\equiv \frac{m}{q}d(s-i)$, and so again $g^m = a^{sm}b^{mv}\in Z(G)$.
\end{proof}

Now that we have determined the sets $G_m(x)$, we can proceed to calculate the indicators for the irreducible modules over $\D(G)$.  By Proposition \ref{MOrho} and Lemma \ref{kqnormalize}, this will split naturally into two cases depending on the conjugacy class we consider.  For ease of exposition, we give these special names.

\begin{df}\label{typedef}
  Let $G=\G$ and for $g\in G$ let $V=M(\class(g),\rho)$ be an irreducible $\D(G)$-module, as in Proposition \ref{MOrho}.  If $g$ satisfies part (ii) of Lemma \ref{kqclass}, we say that $V$ is a Type I module.  If $g$ satisfies part (iii) of Lemma \ref{kqclass}, we say that $V$ is a Type II module.
\end{df}

As mentioned before, those $g\in Z(G)$ yield $\D(G)$-modules whose indicators are governed entirely by Lemma \ref{FxF} and the results of Section \ref{kqrepsect}.

\subsection{The Type I Modules}
We now determine the indicators of the Type I modules, as defined in Definition \ref{typedef}.

\begin{thm}\label{kqdouba}
  Let $G\cong\G$ be a group as in Definition \ref{presentation}.  Define $d$ as in equation (\ref{dvalue}). Suppose $a^i\not\in Z(G)$ and let $j\in\BZ$ with $q\mid j$.  Let $\chi_{r,s}$ be any irreducible character of $\cyc{a,b^q}=C_G(a^ib^j)$, defined as in Lemma \ref{kqnonlinchar}.  Consider the irreducible $\D(G)$-module $V=M(\class(a^ib^j),\chi_{r,s})$ from Proposition \ref{MOrho}.

  \begin{enumerate}
    \item If $ql\nmid mj$, then $\nu_m(V)=0$.
    \item If $q\nmid l$ and $ql\mid mj$, then
    \begin{eqnarray*}
        \nu_m(V) &=& \left\{ \begin{array}{cll}
        q   &;& q\mid m \wedge k\mid mi \wedge k\mid mr \wedge lq\mid ms\\
        q-1 &;& q\mid m \wedge\left(k\nmid mi\vee k\nmid mr\right) \wedge kq\mid mdi \wedge kq\mid mdr \wedge lq\mid ms\\
        1   &;& \left(q\nmid m  \vee lq\nmid ms\right) \wedge k\mid mi \wedge k\mid mr \wedge l\mid ms\\
        0   &;& \mbox{otherwise} \end{array} \right.
    \end{eqnarray*}
    \item If $q\mid l$ and $ql\mid mj$, then
    \begin{eqnarray*}
        \nu_m(V) &=& \left\{ \begin{array}{cll}
        q   &;& q\mid m \wedge k\mid mi \wedge k\mid mr \wedge lq\mid ms\\
        q-1 &;& q\mid m \wedge \left( k\nmid mr \vee k\nmid mi\right)\wedge kq\mid mdi \wedge lq\mid ms\\
        1   &;& q\nmid m \wedge k\mid mi \wedge k\mid mr \wedge l\mid ms\\
        \hspace{-9pt}-1   &;& q\mid m \wedge \left( k\nmid mi \vee k\nmid mr \right) \wedge lq\nmid ms \wedge kq\mid mdi \wedge kq\mid mdr \wedge l\mid ms\\
        0  &;& \mbox{otherwise}\end{array} \right.
    \end{eqnarray*}
  \end{enumerate}
\end{thm}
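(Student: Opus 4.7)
The plan is to apply Corollary \ref{cor34} with base point $x = a^i b^j$ and character $\eta = \chi_{r,s} = \phi_{r,s}$, exploiting that $C_G(x) = \cyc{a, b^q}$ is abelian by Lemma \ref{kqnormalize}.ii, so $\phi_{r,s}$ is multiplicative with $\phi_{r,s}(a^p b^{qt}) = \mu_k^{rp}\mu_l^{st}$.  A short conjugation identity---apply conjugation by $g$ to $\prod_{h=0}^{m-1} g^{-h} x g^h = 1$ and compare with the original---shows that $g \in G_m(x)$ forces $g^m \in C_G(x)$, so Lemma \ref{0contrib} reduces the indicator to $\nu_m(V) = \frac{1}{kl}\sum_{g \in G_m(x)}\eta(g^m)$.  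Since Proposition \ref{kqsets} parts (i) and (ii) both require $ql \mid mj$ for $G_m(x)$ to be nonempty, part (i) of the theorem is immediate.

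For (ii) and (iii) assume $ql \mid mj$, and partition $g = a^{s'} b^v \in G_m(x)$ according to whether $q \mid v$.  When $q \mid v$ (write $v = qv'$), Proposition \ref{kqsets}.i requires only $k \mid mi$ and lets $s'$, $v'$ range over $\BZ_k \times \BZ_l$; the character values $\mu_k^{mrs'}\mu_l^{msv'}$ sum to $kl \cdot [k \mid mr \wedge l \mid ms]$, contributing $[k \mid mi \wedge k \mid mr \wedge l \mid ms]$ after dividing by $kl$.  When $q \nmid v$, Lemma \ref{sdpident}.iv gives $g^m = a^{s' T_v} b^{vm}$ with $T_v = \sum_{h=0}^{m-1} n^{vh}$; if $q \nmid m$ then $q \nmid vm$, so $g^m \notin C_G(x)$ and the conjugation identity shows this subcase is empty.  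If $q \mid m$ then $n^v$ has order $q$, giving $T_v = (m/q) d$; Proposition \ref{kqsets}.ii yields the constraint $kq \mid mdi$, and the character becomes $\mu_k^{rs' md/q}\mu_l^{svm/q}$.  Summing over $s'$ modulo $k$ gives $k[kq \mid mdr]$, while summing $\mu_l^{svm/q}$ over $v \in \{0, \dots, ql-1\} \setminus q\BZ$ (take the full sum over $v$ and subtract the $q \mid v$ contribution, exactly as in Lemma \ref{lsplitlem}) gives $ql[lq \mid sm] - l[l \mid sm]$.

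Combining the two subcases and dividing by $kl$ produces the compact expression
$$\nu_m(V) = [k \mid mi \wedge k \mid mr \wedge l \mid ms] + [q \mid m \wedge kq \mid mdi \wedge kq \mid mdr](q[lq \mid ms] - [l \mid ms]).$$
Case (ii) follows because $q \nmid l$ together with $q \mid m$ forces $[lq \mid ms] = [l \mid ms]$, collapsing the coefficient to $(q-1)[l \mid ms]$ and yielding values in $\{0, 1, q-1, q\}$; case (iii) admits the new regime $l \mid ms$ yet $lq \nmid ms$, which generates the $-1$ case.  Corollary \ref{classcor2} is invoked repeatedly to replace $k \mid mi \wedge q \mid m$ by the stronger $kq \mid mdi$ (and similarly for $r$) so that the two summands merge cleanly into the theorem's piecewise formulas.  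The principal obstacle is not any single calculation but the combinatorial bookkeeping: one must track the implications among $k \mid mi$, $kq \mid mdi$, $l \mid ms$, $lq \mid ms$, and their $r$-analogues with enough care that the stated case distinctions emerge unambiguously.
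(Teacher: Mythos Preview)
Your argument is correct and follows essentially the same route as the paper: both proofs invoke Proposition \ref{kqsets} parts (i)--(ii) to describe $G_m(a^ib^j)$, split the sum according to whether the $b$-exponent is divisible by $q$, evaluate the $q\nmid v$ block via the Lemma \ref{lsplitlem} telescoping trick, and invoke Corollary \ref{classcor2} to merge the resulting divisibility conditions into the stated piecewise formula.  Your presentation is somewhat tighter because you first record the closed expression $[k\mid mi\wedge k\mid mr\wedge l\mid ms]+[q\mid m\wedge kq\mid mdi\wedge kq\mid mdr]\bigl(q[lq\mid ms]-[l\mid ms]\bigr)$ and only then unpack cases, whereas the paper computes (\ref{doubasum1}) and (\ref{doubasum2}) separately and assembles at the end.

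One small point of citation: the passage from Corollary \ref{cor34} (a sum over the full conjugacy class $\CO$) to your single-orbit formula $\nu_m(V)=\frac{1}{kl}\sum_{g\in G_m(x)}\eta(g^m)$ silently uses that each of the $q$ conjugates contributes identically.  This is exactly the content of the \emph{first} equality in Corollary \ref{indicforms}, $\nu_m(\chi)=\frac{1}{|C_G(s)|}\sum_{y}z_m(s,y)\eta(y)$, once your conjugation identity guarantees $g^m\in C_G(x)$; citing that form directly would close the gap.
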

\begin{proof}
  To begin, suppose that we had $q\nmid m$, $q\mid j$, and $q\nmid t$.  Suppose for some $s\in\BZ$ that $a^s b^t\in G_m(a^i b^j)$.  Then it is easily observed that $(a^s b^t)^m$ is in no conjugate of $C_G(a^i b^j)=\cyc{a,b^q}$.  So by Lemma \ref{0contrib}, the element $a^s b^t$ contributes zero to the $m$-th indicator of $V$.  So, without loss of generality, by Lemma \ref{kqsets} we may suppose that if $q\mid v$ and $q\nmid t$, then
  \begin{eqnarray}
    a^s b^v\in G_m(a^i b^j) \iff ql\mid mj \wedge k\mid mi\label{gmsetrec1}\\
    a^s b^t \in G_m(a^i b^j) \iff ql\mid mj \wedge q\mid m \wedge kq \mid mdi.\label{gmsetrec2}
  \end{eqnarray}
  In particular, if $mj\not\equiv 0\bmod ql$, then it follows that $\nu_m(V)=0$.  So in the remainder of the proof we assume that $mj\equiv 0\bmod ql$.

  First we suppose that $q\nmid m$.  If $k\nmid mi$, then we again conclude that $\nu_m(V)=0$ by (\ref{gmsetrec1}).  So suppose that $k\mid mi$.  Then
  \begin{eqnarray*}
    \nu_m(V) &=& \frac{1}{kql}\sum_{g\in\class(a^i b^j)}\sum_{y\in\cyc{b^q}}\sum_{x\in\cyc{a}} \Tr(p_g\bowtie (xy)^m)\\
    &=& \frac{1}{kql}\sum_{u=0}^{q-1}\sum_{y\in\cyc{b^q}}\sum_{x\in\cyc{a}} \chi_{r,s}(x^{n^u m}y^m)\\
    &=& \frac{1}{kql}\sum_{u=0}^{q-1}\sum_{y\in\cyc{b^q}}\sum_{x\in\cyc{a}} \chi_{mn^ur,ms}(xy)\\
    &=& \frac{1}{q}\sum_{u=0}^{q-1}\ip{\chi_{mn^u r,ms}}{1}\\
    &=& \left\{ \begin{array}{cll}
        1 &;& k\mid mr \wedge l\mid ms\\
        0 &;& k\nmid mr \vee l\nmid ms
    \end{array}\right.
  \end{eqnarray*}
  This completes the case $q\nmid m$.

  So now suppose for the rest of the proof that $q\mid m$.  For notational convenience, set $\CO=\class(a^i b^j)$.  We consider the two sums
  \begin{eqnarray}
    \frac{1}{kql}\sum_{g\in\CO}\sum_{x\in\cyc{a}}\sum_{y\in\cyc{b^q}} \Tr(p_g\bowtie (xy)^m)\label{doubasum1}\\
    \frac{1}{kql}\sum_{g\in\CO}\sum_{x\in\cyc{a}}\sum_{y\in\cyc{b}\setminus\cyc{b^q}} \Tr(p_g\bowtie x^{\frac{m}{q}d}y^{q\frac{m}{q}})\label{doubasum2}.
  \end{eqnarray}
  We have $\nu_m(V)=(\ref{doubasum1})+(\ref{doubasum2})$, and we proceed to compute each part separately.

  By (\ref{gmsetrec1}) and Corollary \ref{0contrib}, we have
  \begin{eqnarray*}
    (\ref{doubasum1}) &=& \left\{ \begin{array}{cll}
        \frac{1}{q}\sum_{u=0}^{q-1}\ip{\chi_{mn^ur,ms}}{1} &;& k\mid mi\\
        0 &;& k\nmid mi
        \end{array}\right.\\
    &=& \left\{ \begin{array}{cll}
        \ip{\chi_{mr,ms}}{1} &;& k\mid mi\\
        0 &;& k\nmid mi
        \end{array}\right.\\
    &=& \left\{ \begin{array}{cll}
        1 &;& k\mid mi \wedge k\mid mr \wedge l\mid ms\\
        0 &;& k\nmid mi \vee k\nmid mr \vee l\nmid ms
        \end{array}\right.
  \end{eqnarray*}
  Additionally, by (\ref{gmsetrec2}), Lemma \ref{lsplitlem}, and Corollary \ref{0contrib}, we have
  \begin{eqnarray*}
    (\ref{doubasum2}) &=& \frac{1}{kql} \left\{ \begin{array}{cll}
        \displaystyle{\sum_{u=0}^{q-1}\sum_{x\in\cyc{a}}\sum_{y\in\cyc{b}\setminus\cyc{b^q}} \chi_{r,s}(x^{\frac{m}{q}dn^u}y^{m})} &;& kq\mid mdi\\
        0 &;& kq\nmid mdi
    \end{array}\right.\\
    &=&  \frac{1}{kql} \left\{ \begin{array}{cll}
        \displaystyle{\sum_{x\in\cyc{a}}\sum_{y\in\cyc{b}\setminus\cyc{b^q}} \chi_{r,s}^G(x^{\frac{m}{q}d}y^{m})} &;& kq\mid mdi\\
        0 &;& kq\nmid mdi
    \end{array}\right.\\
    &=& \left\{ \begin{array}{cll}
        \displaystyle{q\ip{\chi_{\frac{m}{q}dr,\frac{m}{q}s}}{1}-\ip{\widetilde{\chi}_{\frac{m}{q}dr,\frac{m}{q}s}}{1}} &;& kq\mid mdi\\
        0 &;& kq\nmid mdi
    \end{array}\right.
  \end{eqnarray*}
  In the last equality, $\widetilde{\chi}_{r,s}$ is defined as in Lemma \ref{lsplitlem}.

  Combining cases, using Corollary \ref{classcor2}, and simplifying, we get the desired formulas.
\end{proof}

\subsection{The Type II modules}

To complete our task, we must compute and analyze the indicators of the Type II modules, as defined in Definition \ref{typedef}.  We will need several technical lemmas to achieve this.  These will be essential in establishing that the indicators will all be integers (Corollary \ref{doubintind}), and when exactly they can be negative (Corollary \ref{negind}).

We start by defining a few things that will appear throughout our calculations in this section.
\begin{df}\label{doubbdef}
Let $G=\G$ be a group as in Definition \ref{presentation}.  Define $c,d$ as in equations (\ref{cvalue}) and (\ref{dvalue}) respectively.  Let $i,m,r\in\BZ$, and suppose that $\alpha$ is any generator for $\widehat{\cyc{a^{k/c}}}$.  We then make the following definitions:
\begin{enumerate}
    \item If $q\mid m$, define
    \begin{eqnarray}\label{hGdef}
      h_G(m) = \gcd\left(\frac{m}{q}(d-q),k\right),
    \end{eqnarray}
    and let $u\in\BZ$ (depending on $G$ and $m$) be such that
    \begin{eqnarray}\label{udef}
        u \frac{m}{q}(d-q)\equiv h_G(m)\bmod k.
    \end{eqnarray}
    \item If $q\mid m$ and $h_G(m)\mid mi$, we define
    \begin{eqnarray}\label{videf}
        v_i = \frac{mi}{h_G(m)}u\in\BZ
    \end{eqnarray}
    with $u$ as above, and then set
    \begin{eqnarray}\label{xidef}
        \xi_{m,i}^r = \alpha\left(a^{rd\frac{m}{q}v_i}\right).
    \end{eqnarray}
\end{enumerate}
\end{df}
Note that, by the definition of $h_G(m)$, the value $u$ in part (i) exists and can taken to be a unit modulo $k$.  We shall do so in the subsequent without further comment.

We now establish that $\xi_{m,i}^r$ is actually an integer under certain mild restrictions.  The first part of this Lemma will be useful in simplifying a few expressions later on.

\begin{lem}\label{doubblem}
Let $G=\G$ be a group as in Definition \ref{presentation}.  Define $c,d$ as in equations (\ref{cvalue}) and (\ref{dvalue}) respectively.  Let $i,m,r\in\BN$, and suppose that $\alpha$ is any generator for $\widehat{\cyc{a^{k/c}}}$.

Then the following two statements hold:
\begin{enumerate}
  \item If $q\mid m$, then
  $$q h_G(m) \mid mdi \iff h_G(m) \mid mi$$
  \item If $h_G(m)\mid mr$, then $\xi_{m,i}^r\in\{-1,1\}$ whenever it is defined.  Moreover, if $\xi_{m,i}^r = -1$, then $q=2$ and $2\mid k$.
\end{enumerate}
\end{lem}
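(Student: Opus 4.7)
Part (i) is essentially immediate: by the definition of $h_G(m)$ we have $h_G(m) \mid m'(d-q)$, so $m'd \equiv m'q = m \pmod{h_G(m)}$. Multiplying by $qi$ gives $mdi = qm'di \equiv qmi \pmod{qh_G(m)}$, and both directions of the equivalence follow (divide by $q$ on the right to get $h_G(m)\mid mi$).

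For part (ii), the key first observation is that $k/c$ always divides $d$ by Lemma \ref{minq}, so $a^{rdm'v_i}$ lies in $\cyc{a^{k/c}}$. Writing $d = (k/c)d'$ and letting $\zeta := \alpha(a^{k/c})$ be a primitive $c$-th root of unity, we have $\xi_{m,i}^r = \zeta^{rd'm'v_i}$. Hence $\xi_{m,i}^r \in \{-1,1\}$ is equivalent to $c \mid 2rd'm'v_i$, while $\xi_{m,i}^r = 1$ is equivalent to $c \mid rd'm'v_i$. Substituting $v_i = miu/h_G(m)$, setting $K = mr/h_G(m)$ and $J = mi/h_G(m)$, and using $h_G(m) = ch_1$, a short manipulation (relying on the fact that $Kh_G(m)/q = rm'$ is an integer because $q \mid m$) rewrites the two divisibilities as $q \mid 2d'h_1 KJu$ and $q \mid d'h_1 KJu$, respectively.

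The first of these holds for free when $q = 2$. When $q > 2$, I would split on whether $q \mid k$; by Corollary \ref{classcor} this is equivalent to $q \mid h_G(m)$. If $q \nmid k$, then $q \nmid h_G(m)$, and the hypothesis $h_G(m) \mid mr$ combined with $q \mid m$ forces $q \mid K$, and likewise $q \mid J$; thus $q \mid d'h_1 KJu$ and in fact $\xi_{m,i}^r = 1$. If $q \mid k$, I would establish that $h_G(m)$ and $k$ have the same $q$-part; this forces $q$ to divide $h_1 d'$, so $q \mid d'h_1 KJu$ and again $\xi_{m,i}^r = 1$. The equality of $q$-parts of $h_G(m)$ and $k$ itself reduces to showing that the $q$-part of $d-q$ is at least the $q$-part of $k$; this combines the cyclic structure of $(\BZ/q^{s})^*$ for odd primes $q$ (which forces any order-$q$ element to have $n-1$ divisible by $q^{s-1}$) with the bound $q \mid S:=(d-q)/(n-1)$ extracted from the proof of Lemma \ref{classcor3}.

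Hence $\xi_{m,i}^r = 1$ whenever $q > 2$, so the value $-1$ can occur only when $q = 2$. Since $\xi_{m,i}^r$ is always a $c$-th root of unity, $\xi_{m,i}^r = -1$ additionally requires $2 \mid c$, and $c \mid k$ then forces $2 \mid k$, proving the moreover claim. The main obstacle in this argument is the case $q > 2$ with $q \mid k$: the clean reduction above depends on the subtle $q$-adic equality for $h_G(m)$, which is the step that genuinely uses the primality of $q$ and the structural input of Lemma \ref{classcor3}.
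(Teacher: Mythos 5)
Your proposal is correct, and on the two points that matter it does check out: part (i) is the same two-line congruence argument as the paper's, and in part (ii) your reduction of $\xi_{m,i}^r=1$ (resp.\ $\xi_{m,i}^r\in\{\pm1\}$) to the integer divisibility $q\mid d'h_1KJu$ (resp.\ $q\mid 2d'h_1KJu$), with $K=mr/h_G(m)$, $J=mi/h_G(m)$, $h_G(m)=ch_1$, $d=(k/c)d'$, is a valid and clean repackaging: since $rm'=Kch_1/q\in\BZ$, one has $rd'm'v_i=\tfrac{1}{q}Kch_1d'Ju$, and divisibility by $c$ of this integer is exactly $q\mid Kh_1d'Ju$. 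The route differs from the paper's in two ways. First, the paper works modulo $k$ with the inverse unit $u^{-1}$ and has to dispose of the correction term $t\,k/h_G(m)$ in $u^{-1}$, whereas you work directly with the exponent of the primitive $c$-th root of unity and never meet that issue. Second, and more substantively, in the crucial case $q>2$, $q\mid k$ the paper simply invokes Lemma \ref{classcor3}.ii ($q\mid d\Rightarrow kq\mid d(d-q)$), while you prove the $q$-adic statement $v_q(h_G(m))=v_q(k)$, i.e.\ $v_q(d-q)\geq v_q(k)$, by combining the factorization $d-q=(n-1)S$ with $q\mid S$ (from the congruence $S\equiv d\bmod q$ in the proof of Lemma \ref{classcor3}, together with $q\mid d$ from Corollary \ref{classcor}) and with $v_q(n-1)\geq v_q(k)-1$, which you get from the cyclicity of $(\BZ/q^{s})^*$ for odd $q$. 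That last input is genuinely new relative to the paper (the paper only needs unit-group structure at the prime $2$, in Lemma \ref{negxi}); what it buys you is the slightly sharper fact that $h_G(m)$ and $k$ have the same $q$-part for odd $q\mid k$. Two small remarks: your "this forces $q\mid h_1d'$" step silently uses $q\mid d$ (via $v_q(h_1d')=v_q(h_G(m))+v_q(d)-v_q(k)=v_q(d)\geq 1$), so make that explicit; and you could shorten the hard case by citing Lemma \ref{classcor3}.ii together with \ref{classcor3}.iii ($q^2\nmid d$, hence $v_q(d)=1$), which already yields $v_q(d-q)\geq v_q(k)$ without any appeal to the structure of $(\BZ/q^{s})^*$.
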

\begin{proof}
  Let the value $u$ be defined as in Definition \ref{doubbdef}.  For the rest of the proof, we also let $u^{-1}\in\BZ$ denote any inverse for $u$ modulo $k$.  A direct check shows that
  \begin{eqnarray*}
    u^{-1}\equiv \frac{\frac{m}{q}(d-q)}{h_G(m)}\bmod \frac{k}{h_G(m)}.
  \end{eqnarray*}
  Thus, we may write
  \begin{eqnarray}\label{uinvdef}
    u^{-1} \equiv \frac{\frac{m}{q}(d-q)}{h_G(m)} + t\frac{k}{h_G(m)}\bmod k
  \end{eqnarray}
  for some $t\in\BZ$.  We note that in general we cannot always takes $t=0$.  The issue is akin to the congruence $7\cdot 7^2\equiv 8\cdot 7^2\bmod 7^2 11$, where $7$ is itself not a unit modulo $7^2 11$ but it acts as the unit $8$ on $7^2$.

  Now for i) we have that $q h_G(m) \mid m(d-q)$ by definition.  The equivalence then follows immediately by multiplying on the right by $i$.

  We now need to prove part ii), which we will do by cases.  Suppose for the rest of the proof that $h_G(m)\mid mr$--which is equivalent to $q h_G(m)\mid mrd$ by part i)--and that $\xi_{m,i}^r$ is defined.  Note that, since $Z(G)$ is a cyclic group of order $c$ by Lemma \ref{kqcent}, $\xi_{m,i}^r$ is a $c$-th root of unity.

  In all cases, we will consider the quantity
  \begin{eqnarray*}
    r d \frac{m}{q} v_i u^{-2} &\equiv& rd \frac{m}{q} \frac{mi}{h_G(m)} \frac{\frac{m}{q}(d-q)}{h_G(m)} + rd t \frac{m}{q}\frac{mi}{h_G(m)} \frac{k}{h_G(m)}\bmod k\\
    &=& rd \frac{m}{q} \frac{mi}{h_G(m)} \frac{\frac{m}{q}(d-q)}{h_G(m)} + t\frac{mr}{h_G(m)} \frac{mi}{h_G(m)} \frac{d k}{q}\bmod k.
  \end{eqnarray*}
  If $q\nmid k$, the last term is clearly zero.  And if $q\mid k$, then by Corollary \ref{classcor} we have $q\mid d$, and once again the last term is zero.  In conclusion
  \begin{eqnarray}\label{doublemid1}
    r d \frac{m}{q} v_i u^{-2} \equiv rd \frac{m}{q} \frac{mi}{h_G(m)} \frac{\frac{m}{q}(d-q)}{h_G(m)} \bmod k.
  \end{eqnarray}
  It is this expression which we aim to show is either zero or annihilated by $2$.  If so, then since $u^{-1}$ is a unit we will have
  \begin{eqnarray*}
    2 r d \frac{m}{q} v_i \equiv 0\bmod k,
  \end{eqnarray*}
  and subsequently that $\xi_{m,i}^r=\pm 1$.

  Suppose first that $q=2$.  Then, using that $d(d-q)\equiv 0\bmod k$ by Lemma \ref{minq}, we find
  \begin{eqnarray*}
    2 rd\frac{m}{q}v_i u^{-2} \equiv \frac{m}{q}\frac{mr}{h_G(m)} \frac{mi}{h_G(m)} d(d-q) \equiv 0\bmod k,
  \end{eqnarray*}
  where the fractions are integers by assumptions and part i).  We conclude that (\ref{doublemid1}) is annihilated by $2$. Subsequently $\xi_{m,i}^r$ is a complex number of order $2$, whence $\xi_{m,i}^r\in\{-1,1\}$ as desired.  Notice that if $2\nmid k$ then $2$ is a unit modulo $k$, and subsequently that $\xi_{m,i}^r =1$ in this case.  We will prove shortly (in Lemma \ref{negxi}) that $\xi_{m,i}^r=-1$ is actually possible in this case, and provide necessary and sufficient conditions for such $m,i,r$ to exist.

  Next we suppose that $q>2$ and that $q\nmid k$.  Then
  $$rd\frac{m}{q}v_i u^{-2}\equiv \frac{m}{q}\frac{mr}{h_G(m)} \frac{mi}{h_G(m)} \frac{1}{q} d(d-q)\equiv 0\bmod k,$$
  where the first three fractions are integers by assumptions and part i), and by $1/q$ is meant the inverse of $q$ modulo $k$.  We conclude that (\ref{doublemid1}) is zero in this case, and thus $\xi_{m,i}^r=1$ as well.

  Now if $q>2$ and $q\mid k$, then by Corollary \ref{classcor} we conclude that $q\mid d$.  So by Lemma \ref{classcor3}.ii, we get that $kq\mid d(d-q)$.  Subsequently, we conclude that $(\ref{doublemid1}\equiv 0\bmod k$, and therefore that $\xi_{m,i}^r=1$.

  The remaining claim in ii) is now immediate.
\end{proof}
The question then naturally arises as to when, exactly, $\xi_{m,i}^r=-1$ can occur in the above Lemma.  We answer this question with our next result.

\begin{lem}\label{negxi}
  Let $G=\G$ be a group as in Definition \ref{presentation}.  Factor $k=2^s x$, with $s\geq 0$, $2\nmid x$.  Define the quantities $\xi_{m,i}^r$ and $h_G(m)$ as in Definition \ref{doubbdef}, and define $c$ as in equation (\ref{cvalue}).  Then the following are equivalent:
  \begin{enumerate}
    \item $\exists m,i,r\in\BN$ such that $h_G(m)\mid mr$, $\xi_{m,i}^r$ is defined, and $\xi_{m,i}^r=-1$.
    \item $q=2$ and $\exists i,r\in\BN$ such that $h_G(2)\mid 2r$, $\xi_{2,i}^r$ is defined, and $\xi_{2,i}^r=-1$.
    \item $q=2$, $2\mid c$ and for $i=r=c/2$, then $\xi_{2,i}^r$ is defined and $\xi_{2,i}^r=-1$.
    \item $q=2$, $s\geq 3$, and $n\equiv 2^{s-1}\pm 1\bmod 2^s$.
  \end{enumerate}
  In general, however, $\xi_{m,i}^r=-1$ does not necessarily imply that $\xi_{2,i}^r$ is defined and satisfies $\xi_{2,i}^r=-1$.
\end{lem}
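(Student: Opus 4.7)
The implications $(iii) \Rightarrow (ii) \Rightarrow (i)$ are trivial: (iii) supplies the explicit witness $m = 2$, $i = r = c/2$ for (ii), and any witness for (ii) is a witness for (i). It therefore suffices to prove $(i) \Rightarrow (iv)$ and $(iv) \Rightarrow (iii)$, closing the circle.

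For $(iv) \Rightarrow (iii)$, assume (iv) and decompose $k = 2^s x$ with $x$ odd. Writing $c_2 = \gcd(n-1,2^s)$ and $c_x = \gcd(n-1,x)$, one checks that $c_2 = 2^{s-1}$ in the subcase $n \equiv 2^{s-1}+1 \bmod 2^s$ and $c_2 = 2$ in the subcase $n \equiv 2^{s-1}-1 \bmod 2^s$; in particular $2 \mid c$, and since $h_G(2) = \gcd(n-1,k) = c$ we have $h_G(2) \mid 2(c/2) = c$, so $\xi_{2,c/2}^{c/2}$ is defined. Writing it as $\alpha(a^E)$ with $E = (c/2)(n+1)u$ and $u(n-1) \equiv c \bmod k$, one verifies $E \equiv k/2 \bmod k$ via CRT. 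Modulo $x$, multiplying $u(n-1) \equiv c \bmod x$ by $(n+1)$ and using $(n-1)(n+1) \equiv 0 \bmod x$ gives $c_x(n+1) \equiv 0 \bmod x$, whence $E \equiv 0 \bmod x$. Modulo $2^s$, a direct $2$-adic calculation (noting that $u(n-1) \equiv c \bmod 2^s$ forces $u$ odd) gives $v_2(E) = v_2(c/2) + v_2(n+1) + v_2(u) = s-1$ in both subcases, via $(s-2)+1+0$ or $0 + (s-1) + 0$. Hence $E \equiv k/2 \bmod k$ and $\xi_{2,c/2}^{c/2} = -1$.

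For $(i) \Rightarrow (iv)$, we argue by contrapositive. By Lemma \ref{doubblem} we may assume $q = 2$ and $2 \mid k$; set $d = n+1$, $e = n-1$, $M = m/2$, so $\xi_{m,i}^r = \alpha(a^E)$ with $E = rdMv_i$. Since $\xi \in \{\pm 1\}$ while $c_x$ is odd, the $x$-component of $\alpha(a^E)$ is automatically trivial, so $\xi_{m,i}^r = -1$ is equivalent to $v_2(E) = s-1$. The identity $v_2(h_G(m)) = \min(v_2(Me), s)$ combined with the constraints $h_G(m) \mid mi$ and $h_G(m) \mid mr$ splits the analysis into two cases. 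If $v_2(Me) \geq s$, one obtains $v_2(M) + v_2(i), v_2(M) + v_2(r) \geq s-1$, and substituting into $v_2(E)$ together with $v_2(d) \geq 1$ forces $v_2(E) \geq s$. If $v_2(Me) < s$, then $uMe \equiv h_G(m) \bmod k$ forces $v_2(u) = 0$, and the constraints give $v_2(r), v_2(i) \geq v_2(e) - 1$; direct substitution yields
\[ v_2(E) \;\geq\; v_2(d) + v_2(e) + v_2(M) - 1, \]
with equality precisely when $v_2(M) = 0$ and $v_2(r) = v_2(i) = v_2(e)-1$. Since $de = n^2-1 \equiv 0 \bmod k$ gives $v_2(d) + v_2(e) \geq s$, achieving $v_2(E) = s-1$ forces the equality $v_2(d) + v_2(e) = s$. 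Factoring $n^2 - 1 = (n-1)(n+1)$ with $n$ odd shows that this equality holds precisely when $s \geq 3$ and $n \equiv 2^{s-1} \pm 1 \bmod 2^s$, which is (iv).

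For the final assertion, take $k = 48$, $q = 2$, $n = 7$, so $s = 4$ and $n \equiv 2^{s-1}-1 \bmod 2^s$. With $m = 6$, $i = r = 1$: one computes $h_G(6) = 6$, $u = 3$, $v_1 = 3$, and $E = 72 \equiv 24 = k/2 \bmod k$, giving $\xi_{6,1}^1 = -1$; but $h_G(2) = \gcd(6,48) = 6$ does not divide $mi = 2$, so $\xi_{2,1}^1$ is undefined. The chief technical hurdle is the $2$-adic bookkeeping in Case 2 of $(i) \Rightarrow (iv)$: one must carefully track how $v_2(u) = 0$ interacts with the divisibility constraints from $h_G(m) \mid mi,\,mr$ to establish the key lower bound $v_2(E) \geq v_2(d) + v_2(e) + v_2(M) - 1$, which is what ultimately forces $v_2(d) + v_2(e) = s$.
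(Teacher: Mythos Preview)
Your proof is correct, and your cycle $(iii)\Rightarrow(ii)\Rightarrow(i)\Rightarrow(iv)\Rightarrow(iii)$ closes cleanly. The approach, however, differs in execution from the paper's argument, and the difference is worth noting.

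The paper's proof works by first invoking the identity established inside the proof of Lemma~\ref{doubblem}, namely that $rd\tfrac{m}{2}v_i u^{-2}$ is congruent modulo $k$ to $\tfrac{m}{2}\cdot\tfrac{mi}{h_G(m)}\cdot\tfrac{mr}{h_G(m)}\cdot\tfrac{d(d-2)}{2}$. Writing $d\equiv d'\tfrac{k}{c}\bmod k$, this reduces $\xi_{m,i}^r=-1$ to the single parity condition
\[
\frac{m}{2}\cdot\frac{mi}{h_G(m)}\cdot\frac{mr}{h_G(m)}\cdot d' \;\equiv\; 1 \pmod 2,
\]
after which the entire problem becomes: when is $d'$ odd? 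The paper then runs through the four involutions of $\Aut(\BZ_{2^s})$ to pin this down. You instead bypass $d'$ entirely and compute $v_2(E)$ directly from $E=rdMv_i$, splitting on whether $v_2(Me)\geq s$. Your key inequality $v_2(E)\geq v_2(d)+v_2(e)+v_2(M)-1$ in the second case, together with $v_2(d)+v_2(e)\geq s$ from $n^2\equiv 1\bmod k$, does the same work as the paper's parity criterion but without needing the $u^{-2}$ identity from Lemma~\ref{doubblem}. This makes your argument more self-contained, at the cost of a bit more bookkeeping in tracking the valuations of $r$, $i$, $M$, $u$ separately. The paper's route isolates a single invariant ($d'\bmod 2$) that controls everything, which is conceptually cleaner; your route is more computational but avoids an external dependency.

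Your counterexample for the final assertion ($k=48$, $n=7$, $m=6$, $i=r=1$) is a concrete instance of the paper's general family: the paper shows that whenever $n\equiv 2^{s-1}-1\bmod 2^s$ and $c>2$, taking $m=c$ and $i=r=1$ gives $\xi_{c,1}^1=-1$ with $\xi_{2,1}^1$ undefined. Here $c=\gcd(6,48)=6=m$, so your example is exactly this construction specialized.
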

\begin{proof}
  By Lemma \ref{doubblem} we have that i) and ii) imply that $q=2$ and $s\geq 1$.  Conversely, if $q\neq 2$ or $s=0$, then $\xi_{m,i}^r=1$ for all $m,i,r\in\BN$ for which it is defined.  So for the remainder of the proof, we assume that $q=2$ and $s\geq 1$.  We also fix the same notation and definitions as in Lemma \ref{doubblem} and its proof.

  Let $m,i,r$ be such that $\xi_{m,i}^r$ from Lemma \ref{doubblem} is defined.  Then by assumptions the value $u$ in Lemma \ref{doubblem} is an odd number.  Let $d\equiv d'\frac{k}{c}\bmod k$.  Now by (the proof of) Lemma \ref{doubblem}, $\xi_{m,i}^r=-1$ is equivalent to
  \begin{eqnarray*}
    \frac{m}{2}r d v_i u^{-2} &\equiv& \frac{m}{2} \frac{mi}{h_G(m)} \frac{mr}{h_G(m)}\frac{d(d-q)}{2}\\
    &\equiv& \frac{m}{2} \frac{mi}{h_G(m)} \frac{mr}{h_G(m)} \frac{n-1}{c}d' \frac{k}{2}\\
    &\equiv& \frac{k}{2}\bmod k,
  \end{eqnarray*}
  which is equivalent to
  \begin{eqnarray}\label{negxi1}
    \frac{m}{2}\frac{mi}{h_G(m)}\frac{mr}{h_G(m)}d' \equiv 1\bmod 2.
  \end{eqnarray}
  If $d'$ is even, then this congruence can clearly never hold.

  Suppose for now that $d'\equiv 1\bmod 2$.  Then taking $m=2$, we have $h_G(2)=\gcd(d-2,k)=\gcd(n-1,k)=c$. Additionally, since $2\mid k$, we must have $2\mid c$ by Corollary \ref{classcor}.  So taking $i=r=c/2$, we have
  \begin{eqnarray*}
    \frac{2}{2}\frac{c}{c}\frac{c}{c}d' \equiv 1\bmod 2,
  \end{eqnarray*}
  as desired.  Thus $\xi_{2,\frac{c}{2}}^{\frac{c}{2}}=-1$.

  To complete the proof of the equivalences we need then only determine necessary and sufficient conditions for $d'\equiv 1\bmod 2$.  To this end, we consider the possible equivalency class of $n$ modulo $2^s$; this describes the contribution to $n$ from the $\Aut(\BZ_{2^s})$ summand of $\Aut(\BZ_k)$ (via the Chinese Remainder Theorem).  There are always at least two involutions in $\Aut(\BZ_{2^s})$, the identity and inversion.  If $s\geq 3$, then we have $\Aut(\BZ_{2^s})\cong \BZ_2\oplus \BZ_{2^{s-2}}$ (\cite[Theorem 7.3]{R}), giving us a total of four involutions from this summand of $\Aut(\BZ_k)$.  The possible congruences are $n\equiv \pm 1\bmod 2^s$ (the identity and inversion), and $n\equiv 2^{s-1}\pm 1\bmod 2^s$ if $s\geq 3$.  We proceed by cases on these congruences.

  If $n\equiv 1\bmod 2^s$, then $2^s\mid \gcd(n-1,k)=c$ and thus $\frac{k}{c}$ is odd, and $d$ is necessarily even.  Thus $d'\equiv 0\bmod 2$, and $\xi_{m,i}^r=-1$ can never occur.

  If $n\equiv -1\bmod 2^s$, then $c\in 4\BN-2$ and $2^s\mid d=n+1$.  Subsequently, $\frac{k}{c}=2^{s-1}x'$ with $2\nmid x'$.  Therefore $d'\equiv 0\bmod 2$, and again $\xi_{m,i}^r=-1$ can never occur.

  Now suppose $s\geq 3$ and $n\equiv 2^{s-1}+1\bmod 2^s$.  Then we have $2^{s-1}\mid c$, $2^s\nmid c$, and consequently $\frac{k}{c}\in 4\BN-2$.  Additionally, $d=n+1\in 4\BN-2$.  Thus $d'\equiv 1\bmod 2$, so by the above there exists $m,r,i$ with $\xi_{m,i}^r=-1$.

  Finally, suppose $s\geq 3$ and $n\equiv 2^{s-1}-1\bmod 2^s$.  Then $n-1\in 4\BN-2$, and so $c\in 4\BN-2$ and thus $2^{s-1}\mid \frac{k}{c}$ and $2^s\nmid \frac{k}{c}$.  Additionally, $n+1\equiv 2^{s-1}\bmod 2^s$, and therefore $d'\equiv 1\bmod 2$.  So by the above there exists $m,r,i$ with $\xi_{m,i}^r=-1$.

  This completes all possible cases and establishes the desired equivalences.

  For the last claim, suppose that $n\equiv 2^{s-1}-1\bmod 2^s$.  Then, as shown above, $2\mid c$ and $c/2$ is an odd number.  Suppose $p\mid k$, with $p>2$ a prime.  Let $s\in\BN$ be the largest natural number such that $p^s\mid k$.  Since $q=2$, we have $n\equiv \pm 1 \bmod p^s$.  If $n\equiv 1\bmod p^s$, then clearly $p^s\mid c$.  Else, if $n\equiv -1\bmod p^s$ then $p\nmid c$.  In particular, we conclude that $\gcd(c/2,k/c)=1$.  It then follows that $h_G(c)=\gcd(\frac{c}{2}(n-1),k) = c\cdot \gcd(\frac{c}{2},k/c) = c$.  It thus follows that $\xi_{c,1}^1=-1$ (and, in particular, is defined).  However, if $c\neq 2$, then $\xi_{2,1}^1$ is not defined, and we have already shown that we can arrange $c>2$ in this situation.
\end{proof}

We now need one more technical lemma before we can compute the indicators for the Type II modules.  This result is similar to Lemma \ref{lsplitlem}.

\begin{lem}\label{doubbsplit}
Let $G=\G$ be as in Definition 2.1.  Define $c$ and $d$ as in equations (\ref{cvalue}) and (\ref{dvalue}) respectively.  Let $j\in\BZ$ with $j\not\equiv 0\bmod q$ and $mj\equiv 0\bmod q$.  Let $\chi$ be any irreducible character of $C_G(a^i b^j)$, and let $\widehat{Z(G)}=\cyc{\psi,\phi}=\widehat{\BZ_{k/c}}\otimes \widehat{\BZ_{l}}$.  Let $r,s\in\BZ$ be such that $\chi|_{Z(G)}=\phi^r\otimes\psi^s$.  Let $\widetilde{\phi}$ denote the restriction of $\phi$ to $\cyc{b^{q^2}}$.  Finally, let $d'\in\BZ$ be such that $d\equiv d'\frac{k}{c}\bmod k$.  Then for any $m\in\BN$ with $q\mid m$, we have:

\renewcommand\arraystretch{2.25} %Make the array have thicker rows for readability purposes
  \begin{eqnarray*}
    \frac{1}{cql}\sum_{u\not\equiv 0,j\bmod q}\sum_{s=0}^{k-1} \chi(a^{\frac{m}{q}ds}b^{mu}) &=& \frac{k}{c}\ip{\psi^{\frac{m}{q}d'r}\phi^{\frac{m}{q}s}}{1} - \frac{2k}{cq}\ip{\psi^{r\frac{m}{q}d'}\widetilde{\phi}^{\frac{m}{q}s}}{1}\\
    &=& \left\{ \begin{array}{cll}
        \displaystyle{\frac{k(q-2)}{cq}} &;& k\mid mr \wedge lq\mid ms\\
        \displaystyle{-\frac{2k}{cq}}    &;& q\mid l \wedge k\mid mr \wedge l\mid ms\wedge lq\nmid ms\\
        0   &;& k\nmid mr \vee \left( lq\nmid ms \wedge \left( q\nmid l \vee l\nmid ms\right)\right)
        \end{array}\right.
  \end{eqnarray*}
\end{lem}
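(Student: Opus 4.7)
The plan is to exploit the fact that every element $a^{\frac{m}{q}ds}b^{mu}$ appearing in the sum lies in $Z(G)=\cyc{a^{k/c},b^q}$, so that $\chi$ evaluated there is determined by its restriction to the center, which factors as a product of characters on the two cyclic summands of $Z(G)$.  Specifically, $q\mid m$ gives $b^{mu}=(b^q)^{mu/q}\in \cyc{b^q}$, while $\frac{k}{c}\mid d$ from Lemma \ref{minq}.iii lets us write $a^{\frac{m}{q}ds}=(a^{k/c})^{\frac{m}{q}d's}\in \cyc{a^{k/c}}$.  Interpreting $\psi$ as the character on the $\cyc{a^{k/c}}$-factor and $\phi$ on the $\cyc{b^q}$-factor (swapping the order of factors from the statement), we obtain $\chi(a^{\frac{m}{q}ds}b^{mu})=\psi^r(a^{\frac{m}{q}ds})\,\phi^s(b^{mu})$, so the double sum factorizes.

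The $s$-summation, by periodicity of the summand in $s$ modulo $c$ (which divides $k$), evaluates to $k\ip{\psi^{\frac{m}{q}d'r}}{1}$ via character orthogonality on $\cyc{a^{k/c}}\cong\BZ_c$.  For the $u$-summation I would use inclusion--exclusion on the two excluded residue classes, writing it as $\sum_{u=0}^{ql-1}-\sum_{u\equiv 0\bmod q}-\sum_{u\equiv j\bmod q}$, with each piece a geometric sum.  The total sum contributes $ql$ when $ql\mid ms$ and $0$ otherwise (orthogonality on $\cyc{b}$); each omitted class contributes $l$ when $l\mid ms$ and $0$ otherwise, where we use the standing hypothesis $ql\mid mj$ (inherited from the application of this lemma via Lemma \ref{kqsets}.(iv)) to ensure $b^{mj}=1$ and thus $\phi^s(b^{mj})=1$.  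Combining gives the $u$-sum as $(q-2)l$ when $ql\mid ms$, $-2l$ when $l\mid ms$ but $ql\nmid ms$, and $0$ when $l\nmid ms$.

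Assembling and identifying the resulting expressions as inner products on subgroups of $Z(G)$ yields the middle formula: the first term uses $\ip{\phi^{\frac{m}{q}s}}{1}=1$ iff $l\mid\frac{m}{q}s$ iff $ql\mid ms$, and the second term uses $\ip{\widetilde{\phi}^{\frac{m}{q}s}}{1}=1$ iff $l\mid ms$ (since $\widetilde{\phi}^{\frac{m}{q}s}$ is trivial on $\cyc{b^{q^2}}$ precisely when $l\mid ms$).  The piecewise form then follows: when $q\nmid l$ one has $\cyc{b^{q^2}}=\cyc{b^q}$ so $\widetilde{\phi}=\phi$ and the two inner products share the same indicator, forcing case (ii) to be empty; when $q\mid l$, they genuinely differ and case (ii) arises precisely for $l\mid ms$ with $lq\nmid ms$.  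Finally, $\ip{\psi^{\frac{m}{q}d'r}}{1}=1$ is rewritten as $k\mid mr$ using Corollary \ref{classcor2} together with the defining relation $d\equiv d'\frac{k}{c}\bmod k$.

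The main obstacle is the careful case analysis and the translation of the character-theoretic condition $c\mid \frac{m}{q}d'r$ into the clean divisibility condition $k\mid mr$ appearing in the piecewise form, particularly in the $q\mid l$ regime where $\widetilde{\phi}$ genuinely differs from $\phi$ and is responsible for the exceptional $-\frac{2k}{cq}$ contribution.  Verifying that the three cases are mutually exclusive and jointly exhaustive, and lining them up with the possible conditions on $ms$ (namely $ql\mid ms$, $l\mid ms$ with $lq\nmid ms$, and $l\nmid ms$), is where most of the bookkeeping resides.
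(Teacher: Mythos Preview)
Your proposal is correct and follows essentially the same route as the paper's own proof: both factor $\chi$ through its restriction to $Z(G)$, split the $u$-sum by inclusion--exclusion over the excluded residue classes $u\equiv 0,j\bmod q$, and reduce the resulting pieces to inner products of characters on $\cyc{b^q}$ and $\cyc{b^{q^2}}$. The only cosmetic difference is that the paper reindexes the excluded classes directly as sums over $\cyc{b^q}$ and $\cyc{b^{q^2}}$ (picking up the $\gcd(q,l)$ factor) before evaluating, whereas you compute the three geometric sums first and then match them to the inner-product expression; both arrive at the same formula, and your observation that the hypothesis $ql\mid mj$ is implicitly needed (and supplied by the ambient Theorem~\ref{kqdoubb}) is exactly right.
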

\begin{proof}
We consider the requirement $u\not\equiv 0,j\bmod q$.  When we sum over the possible values of $u$, we are thus excluding those $u$ with $u=qx$ or $u=qx+j$, some $x$.  Multiplying by $m$, we have $mu=mqx=q^2\frac{m}{q}x$ or $mu=q^2\frac{m}{q}u+mj\equiv q^2\frac{m}{q}u\bmod ql$.  Therefore, the sum we are interested in is equal to
    \begin{eqnarray*}
        \frac{1}{cql}\sum_{s=0}^{k-1}\left(\sum_{u=0}^{lq-1} \chi(a^{\frac{m}{q}ds}b^{mu}) - \sum_{u\equiv 0,j\bmod q}\chi(a^{\frac{m}{q}ds}b^{mu})\right).
    \end{eqnarray*}
This summation is then easily seen to be equal to
    \begin{eqnarray*}
        \frac{1}{cql} \sum_{x\in\cyc{a}}\left(q\sum_{y\in\cyc{b^q}}\psi^r(x^{\frac{m}{q}d}) \phi^s(y^{\frac{m}{q}}) - 2\gcd(q,l)\sum_{y\in\cyc{b^{q^2}}}\psi^r(x^{\frac{m}{q}d}) \phi^s(y^{\frac{m}{q}})\right).
    \end{eqnarray*}
Finally, we conclude that the summation is question is equal to
    \begin{eqnarray}
        \frac{k}{c}\ip{\psi^{\frac{m}{q}d'r}\phi^{\frac{m}{q}s}}{1} - \frac{2k}{cq}\ip{\psi^{r\frac{m}{q}d'}\widetilde{\phi}^{\frac{m}{q}s}}{1}.\label{asbucontrib}
    \end{eqnarray}
The value of this last expression depends on whether or not $q\mid l$; equivalently, whether or not $\cyc{b^{q^2}}\neq \cyc{b^q}$.  Specifically
  \renewcommand\arraystretch{2.25} %Make the array have thicker rows for readability purposes
  \begin{eqnarray*}
    (\ref{asbucontrib}) &=& \left\{ \begin{array}{cll}
        \displaystyle{\frac{k(q-2)}{cq}} &;& kq\mid mdr \wedge lq\mid ms\\
        \displaystyle{-\frac{2k}{cq}}    &;& q\mid l \wedge kq\mid mdr \wedge l\mid ms\wedge lq\nmid ms\\
        0   &;& kd\nmid mdr \vee \left( lq\nmid ms \wedge \left( q\nmid l \vee l\nmid ms\right)\right)
        \end{array}\right.
  \end{eqnarray*}
which gives the desired result.
\end{proof}

\begin{thm}\label{kqdoubb}
    Let $G=\G$ be as in Definition 2.1.  Define $c$ and $d$ as in equations (\ref{cvalue}) and (\ref{dvalue}) respectively.  Let $j\in\BZ$ with $j\not\equiv 0\bmod q$.  Let $\chi$ be any irreducible character of $C_G(a^i b^j)$, and let $\widehat{Z(G)}=\cyc{\psi,\phi}=\widehat{\BZ_{k/c}}\otimes \widehat{\BZ_{l}}$.  Let $r,s\in\BZ$ be such that $\chi|_{Z(G)}=\phi^r\otimes\psi^s$.  Define $V=M(\CO(a^i b^j),\chi)$ as in Proposition \ref{MOrho}. Finally, let $m\in\BN$.

    \begin{enumerate}
      \item If $ql\nmid mj$, then $\nu_m(V)=0$.
      \item If $q\nmid l$ and $ql\mid mj$, then
      \renewcommand\arraystretch{2.25} %Make the array have thicker rows for readability purposes
      \begin{eqnarray*}
      \nu_m(V)=\left\{ \begin{array}{cll}
        \displaystyle{\frac{1}{cq}\left(k(q-2) + 2 \xi_{m,i}^r h_G(m)\right)} &;& kq\mid mdi \wedge kq\mid mdr \wedge l\mid ms\\
        \displaystyle\frac{2 h_G(m)}{cq}\xi_{m,i}^r &;& \hspace{-5pt}
            \renewcommand\arraystretch{1}
            \begin{array}{l}
                h_G(m)\mid mi \wedge h_G(m)\mid mr \wedge l\mid ms\, \wedge\\
                \ \left( kq\nmid mdi \vee kq\nmid mdr\right)
            \end{array}\\
        0 &;& \mbox{otherwise} \end{array}\right.
      \end{eqnarray*}
      \item If $q\mid l$ and $ql\mid mj$, then
      \renewcommand\arraystretch{2.25} %Make the array have thicker rows for readability purposes
        \begin{eqnarray*}
            \nu_m(V)=\left\{ \begin{array}{cll}
                \displaystyle{\frac{1}{cq}\left(k(q-2) + 2 \xi_{m,i}^r h_G(m)\right)} &;& kq\mid mdi \wedge kq\mid mdr \wedge lq\mid ms\\
                \displaystyle{\frac{2 h_G(m)}{cq}\xi_{m,i}^r} &;& \hspace{-5pt}
                    \renewcommand\arraystretch{1}
                    \begin{array}{l}
                        h_G(m)\mid mi \wedge h_G(m)\mid mr \wedge l\mid ms\,\wedge\\
                        \ \left( kq\nmid mdi \vee kq\nmid mdr\right)
                    \end{array}\\
                \displaystyle{\frac{2}{cq}\left(\xi_{m,i}^r h_G(m)-k\right)} &;& kq\mid mdi \wedge kq\mid mdr \wedge l\mid ms \wedge lq\nmid ms\\
                0 &;& \mbox{otherwise} \end{array}\right.
        \end{eqnarray*}
    \end{enumerate}
\end{thm}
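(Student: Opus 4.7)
The plan is to invoke Corollary~\ref{cor34} and use that each element of the conjugacy class $\class(a^ib^j)$ contributes identically (by Lemma~\ref{0contrib} together with conjugation-equivariance) to reduce to a single sum over $G_m(a^ib^j)$. Combined with Corollary~\ref{kqsetcent}, which places every relevant $g^m$ in $Z(G)$, this gives
$$\nu_m(V)=\frac{1}{clq}\sum_{g\in G_m(a^ib^j)}\chi(g^m),$$
with $\chi(g^m)$ determined by $\chi|_{Z(G)}=\phi^r\otimes\psi^s$. Part~(i) follows immediately: every case of Proposition~\ref{kqsets} demands $ql\mid mj$, so if this fails then $G_m(a^ib^j)=\emptyset$.

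Henceforth assume $ql\mid mj$, which forces $q\mid m$. Partition $G_m(a^ib^j)=A\sqcup B\sqcup C$ according to the residue modulo $q$ of the $b$-exponent $v$ of $g=a^tb^v$: piece $A$ has $v\equiv 0$ (Proposition~\ref{kqsets}(iii)), piece $B$ has $v\not\equiv 0,j$ (part~(iv)), and piece $C$ has $v\equiv j$ with $v\not\equiv 0$ (part~(v)). For each piece, Lemma~\ref{sdpident}(iv) provides an explicit formula for $g^m$, and the fact $\frac{k}{c}\mid d$ (Lemma~\ref{minq}(iii)) guarantees $g^m\in Z(G)$.

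Piece $B$ is non-empty exactly when $kq\mid mdi$; there is then no further restriction on $t$, so its contribution is precisely the sum evaluated in Lemma~\ref{doubbsplit}, yielding $\frac{k(q-2)}{cq}$ under $kq\mid mdr\wedge lq\mid ms$, giving $-\frac{2k}{cq}$ in the $q\mid l$ case under $l\mid ms\wedge lq\nmid ms$, and $0$ otherwise. For pieces $A$ and $C$ the defining relation reduces to a linear congruence in $t$, namely $t\cdot\frac{m}{q}(d-q)\equiv\frac{m}{q}di\bmod k$ for $A$ and $t\cdot\frac{m}{q}(d-q)\equiv -mi\bmod k$ for $C$; each is solvable iff $h_G(m)\mid mi$ (equivalently $qh_G(m)\mid mdi$, by Lemma~\ref{doubblem}(i)), in which case the solution set is an arithmetic progression of $h_G(m)$ terms with common difference $k/h_G(m)$. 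Substituting, the sum over $v$ contributes $l$ when $l\mid ms$ and $0$ otherwise (identical in $A$ and $C$, because the extra factor $\psi^s(b^{mj})$ in $C$ equals $1$ by $ql\mid mj$); the sum over the progression factors as a leading constant times a geometric series $\sum_{p=0}^{h_G(m)-1}\omega^p$, where $\omega=\alpha(a^{rmk/h_G(m)})$ is an $h_G(m)$-th root of unity equal to $1$ iff $h_G(m)\mid mr$. The leading constant is $\xi_{m,i}^r$ directly in $A$ and $\overline{\xi_{m,i}^r}=\xi_{m,i}^r$ in $C$ (the last equality by reality of $\xi_{m,i}^r$ from Lemma~\ref{doubblem}(ii)). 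Hence pieces $A$ and $C$ together contribute $\frac{2h_G(m)}{cq}\xi_{m,i}^r$ when $h_G(m)\mid mi\wedge h_G(m)\mid mr\wedge l\mid ms$, and zero otherwise.

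Summing the three contributions and aligning divisibility conditions via Corollary~\ref{classcor2} and Lemma~\ref{doubblem}(i) yields the cases stated in~(ii) and~(iii); the split between them mirrors Lemma~\ref{doubbsplit}'s distinction between $q\nmid l$ and $q\mid l$. The main technical obstacle will be verifying that the leading factor in piece~$C$ really reduces to $\xi_{m,i}^r$: this requires showing $v_i\cdot\frac{m}{q}(d-q)\equiv mi\bmod k$ (so that $t=-v_i$ genuinely solves $C$'s congruence) and then using the reality of $\xi_{m,i}^r$ to equate $\phi^r(a^{-v_imd/q})$ with $\xi_{m,i}^r$. A secondary bookkeeping challenge is tracking the distinctions among $h_G(m)\mid mi$, $qh_G(m)\mid mdi$, and the strictly stronger $kq\mid mdi$ across the three pieces, since piece~$B$ requires the strongest version for its elements to exist.
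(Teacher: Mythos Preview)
Your proposal is correct and follows essentially the same approach as the paper's proof: both reduce via Corollary~\ref{cor34} and Corollary~\ref{kqsetcent} to a single sum over $G_m(a^ib^j)$ with values in $Z(G)$, partition into the three pieces corresponding to Proposition~\ref{kqsets}(iii)--(v), invoke Lemma~\ref{doubbsplit} for piece~$B$, solve the linear congruences for pieces~$A$ and~$C$ using Definition~\ref{doubbdef} and Lemma~\ref{doubblem}, and then combine. Your identification of the two technical sticking points---showing the leading constant in piece~$C$ is $\overline{\xi_{m,i}^r}=\xi_{m,i}^r$, and tracking $h_G(m)\mid mi$ versus the stronger $kq\mid mdi$---matches exactly what the paper spends its effort on; your treatment of the $v$-sum in piece~$C$ is in fact slightly cleaner than the paper's, which splits on $q\mid l$ and then observes the two cases collapse to the single condition $l\mid ms$ anyway.
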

\begin{proof}
  We first note that $h_G(m)$ is defined wherever it occurs in the above formula.  Furthermore, by Lemma \ref{doubblem}.ii, $\xi_{m,i}^r=\pm 1$ whenever it occurs in the stated formula.

  By Proposition \ref{kqsets}, if $ql\nmid mj$, then $G_m(a^i b^j)=\emptyset$, whence $\nu_m(V)=0$.  So for the remainder of the proof we assume that $ql\mid mj$.  Since $q\nmid j$ by assumptions, we in particular assume that $q\mid m$.  We also let the values $v_i$ and $u$ be defined as in the statement of Lemma \ref{doubblem}.  Furthermore, Lemma \ref{minq}.iv implies $c\mid h_G(m)$ whenever $q\mid m$.  This fact will be necessary for reducing a number of summations that appear in the calculation of $\nu_m(V)$ to inner products of characters of $Z(G)$.  Finally, let $d'\in\BZ$ be such that $d\equiv d' \frac{k}{c}\bmod k$, which exists by Lemma \ref{minq}.

  Recall that by Corollary \ref{kqsetcent} that if $g\in G_m(a^i b^j)$ then $g^m\in Z(G)$.  So by Corollaries \ref{0contrib} and \ref{cor34}, we split the summation for $\nu_m(V)$ into the three pieces determined by cases (iii), (iv), and (v) of Proposition \ref{kqsets}.  To this end, we recall that $C_G(a^i b^j)$ is abelian by Proposition \ref{kqnormalize}, and so $\chi$ and $\chi|_{Z(G)}$ are both multiplicative.

  First we consider the contribution of elements of the form $a^tb^v$, with $q\mid v$.  By Proposition \ref{kqsets} and our assumptions, $a^t b^v\in G_m(a^i b^j)$ $\iff$ $t\frac{m}{q}(d-q)\equiv \frac{m}{q}di \bmod k$.  Such a value $t$ exists $\iff$ $q h_G(m)\mid mdi$, which is equivalent to $h_G(m)\mid mi$ by Lemma \ref{doubblem}.i.  So if $h_G(m)\nmid mi$, the elements of the form $a^t b^v$ contribute 0 to the summation.  Else, suppose $h_G(m)\mid mi$.  Then if $a^t b^v\in G_m(a^i b^j)$, we have
  \begin{eqnarray*}
    t = \frac{ mdi/q}{h_G(m)}u + \frac{k}{h_G(m)}t', \ 0\leq t'<h_G(m).
  \end{eqnarray*}
  Using this and the initial congruence, we find
  \begin{eqnarray*}
    \chi((a^t b^v)^m) &=& \chi(a^{mt}b^{mv})\\
    &=&\chi(a^{\frac{m}{q}d(t-i)})\chi(b^{mv})\\
    &=& \chi(a^{-\frac{m}{q}di}) \chi(a^{\frac{m}{q}dt})\phi^s(b^{mv})\\
    &=& \psi^r(a^{\frac{m}{q}d\left( \frac{mdi/q}{h_G(m)}u-i\right)}) \psi^{\frac{rmkd'}{h_G(m)}}(a^{t'\frac{k}{c}})\phi^s(b^{mv}).
  \end{eqnarray*}
  To simplify the first term, we observe that
  \begin{eqnarray*}
    \frac{mdi/q}{h_G(m)}u - i &\equiv& \frac{umdi/q}{h_G(m)} - \frac{ih_G(m)}{h_G(m)}\\
    &\equiv& \frac{iu(\frac{m}{q}d-\frac{m}{q}(d-q))}{h_G(m)}\\
    &\equiv& \frac{mi}{h_G(m)}u\\
    &\equiv& v_i\bmod k.
  \end{eqnarray*}

  Using Lemma \ref{doubblem}.i, we therefore find that, under our assumptions, these terms contribute the following to $\nu_m(V)$:
  \begin{eqnarray}\label{ascontrib}
    \frac{\xi_{m,i}^r}{cq} h_G(m) \ip{\psi^{\frac{mk}{qh_G(m)} r d'}}{1}\ip{\phi^{ms}}{1}
    &=& \left\{ \begin{array}{cll} \frac{\xi_{m,i}^r h_G(m)}{cq} &;& qh_G(m) \mid mrd \wedge l\mid ms\\
    0&;& \mbox{otherwise} \end{array}\right.\nonumber\\
    &=& \left\{ \begin{array}{cll} \frac{\xi_{m,i}^r h_G(m)}{cq} &;& h_G(m) \mid mr \wedge l\mid ms\\
    0&;& \mbox{otherwise} \end{array}\right.
  \end{eqnarray}

  Let us now consider the contribution of elements of the form $a^s b^u$, with $u\not\equiv 0,j\bmod q$.  Under our assumptions, we have that $a^s b^u\in G_m(a^i b^j) \iff kq\mid mdi$.  So if $kq\nmid mdi$, these terms contribute $0$ to $\nu_m(V)$.  So suppose, instead, that $kq\mid mdi$. We have that
  \begin{eqnarray*}
    \chi((a^s b^u)^m) = \chi(a^{\frac{m}{q}ds}b^{mu})=\psi^r(a^{\frac{m}{q}ds})\phi^s(b^{mu}).
  \end{eqnarray*}
  Since $u\not\equiv 0,j\bmod q$, when we sum these terms over $s$ and $u$, we are excluding those $u$ with $u=qx$ or $u=qx+j$, some $x$.  Multiplying by $m$, we have $mu=mqx=q^2\frac{m}{q}x$ or $mu=q^2\frac{m}{q}u+mj\equiv q^2\frac{m}{q}u\bmod ql$.  Thus, the contribution of these terms is given by
  \begin{eqnarray}
    \frac{1}{cql}\sum_{u\not\equiv 0,j\bmod q}\sum_{s=0}^{k-1} \chi((a^sb^u)^m) &=& \frac{1}{cql}\sum_{u\not\equiv 0,j\bmod q}\sum_{s=0}^{k-1} \chi(a^{\frac{m}{q}ds}b^{mu})
  \end{eqnarray}
  The value of this summation is given by Lemma \ref{doubbsplit}.

  Finally, we consider the contribution of the elements of the form $a^s b^v$ with $v\equiv j\bmod q$.  Under our assumptions, we have that $a^s b^v\in G_m(a^i b^j) \iff s\frac{m}{q}(q-d)\equiv mi\bmod k$.  Such an $s$ exists $\iff$ $h_G(m)\mid mi$.  So if $h_G(m)\nmid mi$, these terms contribute $0$ to $\nu_m(V)$.  Else, suppose $h_G(m)\mid mi$.  Then if $a^sb^v\in G_m(a^i b^j)$ we must have
  $$s = -v_i + \frac{k}{h_G(m)}s', \ 0\leq s'<h_G(m),$$
  and therefore
  \begin{eqnarray*}
    \chi((a^s b^v)^m) &=& \chi(a^{\frac{m}{q}ds}b^{mv})\\
    &=& \psi\left( a^{-\frac{m}{q} r d' v_i \frac{k}{c}}\right) \psi^r\left(a^{\frac{km}{q h_G(m)}d' s' \frac{k}{c}}\right)\phi^s(b^{mv})\\
    &=& \overline{\xi_{m,i}^r} \cdot \psi^{\frac{km}{q h_G(m)}r d'}\left(a^{s' \frac{k}{c}}\right)\phi^{\frac{m}{q}s}(b^{qv}).
  \end{eqnarray*}
  Now since $v\equiv j\bmod q$, we have that $v=qx+j$ for $0\leq x<l$.  Then $mv = q^2 \frac{m}{q}x+ mj \equiv q^2 \frac{m}{q}x\bmod ql$.  So, again, the contribution of these terms depends on whether or not $q\mid l$.  If $q\nmid l$, then the contribution is
  \begin{eqnarray*}
    \frac{1}{cql}\frac{h_G(m)}{c}\overline{\xi_{m,i}^r} \sum_{x\in\cyc{a^{k/c}}}\psi^{\frac{mk}{q h_G(m)}rd'}(x) \sum_{y\in\cyc{b^q}}\phi^{\frac{m}{q}s}(y) &=& \frac{h_G(m)}{cq}\overline{\xi_{m,i}^r} \ip{ \psi^{\frac{m}{q}rd'}\phi^{\frac{m}{q}s}}{1}.
  \end{eqnarray*}
  On the other hand, if $q\mid l$, then the contribution is
  \begin{eqnarray*}
    \frac{q}{cql}\frac{h_G(m)}{c}\overline{\xi_{m,i}^r}\sum_{x\in\cyc{a^{k/c}}} \psi^{\frac{mk}{q h_G(m)}rd'}(x) \sum_{y\in\cyc{b^{q^2}}}\phi^{\frac{m}{q}s}(y) &=& \frac{h_G(m)}{cq}\overline{\xi_{m,i}^r} \ip{ \psi^{\frac{m}{q}rd'}\phi^{\frac{m}{q}s}|_{\cyc{b^{q^2}}}}{1}.
  \end{eqnarray*}
  Combining, the contribution of these terms is
  \begin{eqnarray}\label{asbjcontrib}
    \left\{ \begin{array}{cll}
        \displaystyle{\frac{h_G(m)}{cq}\overline{\xi_{m,i}^r}} &;& h_G(m)\mid mr \wedge l\mid ms \wedge (lq\mid ms \vee q\mid l)\\
        0 &;& \mbox{ otherwise }
    \end{array}\right.
  \end{eqnarray}
  A simple argument then shows that
  \begin{eqnarray}
    q\mid m \Rightarrow \big( \left(l\mid ms \wedge (lq\mid ms\vee q\mid l)\right) \iff l\mid ms \big)
  \end{eqnarray}

  Combining all of the cases; equations (\ref{ascontrib}), (\ref{asbucontrib}), and (\ref{asbjcontrib}); and both parts of Lemma \ref{doubblem}, we get the desired formulas.
\end{proof}

This result completes the determination of all indicators over the double.  We remark that, since $C_G(a^i b^j)$ is an abelian group (containing $Z(G)$), any values of $r$ and $s$ (up to equivalence) can be obtained from some irreducible character of $C_G(a^i b^j)$.

The question now naturally arises as to what are necessary and sufficient conditions for a negative indicator to arise over $\D(G)$ when $q\nmid l$.  This is the precise reason we desired to prove Lemma \ref{negxi}.

\begin{cor}\label{negind}
  Let $G\cong\G$ be a group as in Definition \ref{presentation}.  Suppose that $q\nmid l$.  Factor $k=2^s x$, with $s\geq 0$, $2\nmid x$.  Consider the Hopf algebra $H=\D(G)$.  Then the following are equivalent:
  \begin{enumerate}
    \item $\exists m\in\BN$ and an irreducible (left) $H$-module $V$ with $\nu_m(V)<0$
    \item $\exists$ an irreducible (left) $H$-module $W$ with $\nu_2(W)=-1$.
    \item $q=2$, $s\geq 3$, and $n\equiv 2^{s-1}\pm 1\bmod 2^s$.
  \end{enumerate}
  In general, however, it need not be true that $\nu_m(W)<0\Rightarrow \nu_2(W)=-1$ for every irreducible $H$-module $W$.
\end{cor}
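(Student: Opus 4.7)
Our plan is to prove the cycle (iii) $\Rightarrow$ (ii) $\Rightarrow$ (i) $\Rightarrow$ (iii), where (ii) $\Rightarrow$ (i) is immediate since $-1<0$. The central observation is that, under $q\nmid l$, negative indicator values can arise only from two specific branches of Theorem \ref{kqdoubb}(ii), and in each such branch the negativity is controlled by $\xi_{m,i}^r=-1$. Lemma \ref{negxi} then translates this condition into the arithmetic conditions on $n$ stated in (iii).

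For (i) $\Rightarrow$ (iii), I would enumerate the irreducible $\D(G)$-modules via Proposition \ref{MOrho} and confirm non-negativity of indicators in each sector. Modules $M(g,\rho)$ with $g\in Z(G)$ reduce to $G$-module indicators by Proposition \ref{FxF}; these take values in $\{0,1\}$ by Theorem \ref{kq1dim} or in $\{0,1,q-1,q\}$ by Theorem \ref{kqnonlinear}(i) (applicable since $q\nmid l$). Type I modules take values in $\{0,1,q-1,q\}$ by Theorem \ref{kqdouba}(ii). Finally, for Type II modules the three admissible values in Theorem \ref{kqdoubb}(ii) are $\tfrac{1}{cq}(k(q-2)+2\xi_{m,i}^r h_G(m))$, $\tfrac{2h_G(m)}{cq}\xi_{m,i}^r$, and $0$; when $\xi_{m,i}^r=1$, both nonzero expressions are strictly positive. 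Hence any negative indicator forces $\xi_{m,i}^r=-1$ for some admissible $m,i,r$, and Lemma \ref{negxi} delivers (iii).

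For (iii) $\Rightarrow$ (ii), assume (iii). By Lemma \ref{negxi}(iii) we have $q=2$, $2\mid c$, and $\xi_{2,c/2}^{c/2}=-1$. Since $q\nmid l$, $l$ is odd, so $j=l$ satisfies $q\nmid j$ and $ql\mid 2j$. Take $i=c/2$ and let $\chi$ be an irreducible character of the abelian group $C_G(a^{c/2}b^l)$ (abelian by Lemma \ref{kqnormalize}(iii)) whose restriction to $Z(G)$ sends $a^{k/c}$ to $-1$ and $b^q$ to $1$; such a $\chi$ exists because every character of a subgroup of a finite abelian group extends. In the parameters of Theorem \ref{kqdoubb} this gives $r=c/2$ and $s=0$. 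Setting $W=M(\CO(a^{c/2}b^l),\chi)$ and computing $h_G(2)=\gcd(n-1,k)=c$, the second branch of Theorem \ref{kqdoubb}(ii) applies: its divisibilities $c\mid c$ and $l\mid 0$ are clear, while $kq\mid mdi$ would require $2k\mid dc$, which fails because $d/(k/c)$ is odd under (iii) as shown inside the proof of Lemma \ref{negxi}. Therefore $\nu_2(W)=\tfrac{2h_G(2)}{cq}\xi_{2,c/2}^{c/2}=-1$, establishing (ii).

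For the final claim, consider $G=\BZ_{24}\rtimes_{19}\BZ_2$: here $k=24$, $q=2$, $n=19\equiv 2^{s-1}-1\bmod 2^s$ with $s=3$, $l=1$, $c=6$, and $d=20$, so (iii) holds. Take $W=M(\CO(ab),\chi)$, where $\chi$ is the character of $C_G(ab)\cong\BZ_{12}$ whose restriction to $Z(G)=\langle a^4\rangle\cong\BZ_6$ is a primitive character, so $(r,s)=(1,0)$ in the parameters of Theorem \ref{kqdoubb}. A direct computation places the evaluation $m=6$ in the second branch ($h_G(6)=6$ divides $mi=6$; $kq=48$ does not divide $mdi=120$) and yields $\xi_{6,1}^1=\alpha(a^{12})=-1$, so $\nu_6(W)=-1$. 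However $h_G(2)=6$ does not divide $mi=2$ when $m=2$, so neither branch of the theorem applies and $\nu_2(W)=0$. The main technical obstacle in the proof is the case-by-case bookkeeping in (i) $\Rightarrow$ (iii), confirming that every branch of every indicator formula is non-negative as soon as $\xi_{m,i}^r=1$ is imposed.
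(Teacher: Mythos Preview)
Your proof is correct and follows essentially the same strategy as the paper's: reduce to Type~II modules via Proposition~\ref{FxF}, Theorem~\ref{kqindsummary}, and Theorem~\ref{kqdouba}; observe that negativity in Theorem~\ref{kqdoubb}(ii) forces $\xi_{m,i}^r=-1$; and invoke Lemma~\ref{negxi} for the equivalence with (iii). Your treatment of (iii)\,$\Rightarrow$\,(ii) and of the final counterexample is more explicit than the paper's (which simply cites Lemma~\ref{negxi} and Theorem~\ref{kqdoubb} without writing out the module), but the arguments agree; as a minor remark, your verification that $2k\nmid dc$ is not actually needed, since with $q=2$, $h_G(2)=c$, and $\xi_{2,c/2}^{c/2}=-1$ both the first and second branches of Theorem~\ref{kqdoubb}(ii) evaluate to $-1$.
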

\begin{proof}
  By Theorem \ref{kqdoubb}, Lemma \ref{FxF} applied to Theorem \ref{kqindsummary}, and Theorem \ref{kqdouba}, we conclude that if an irreducible $H$-module $W$ has $\nu_m(W)<0$ then $W$ must be a Type II module.  Its indicators, in particular, are given by Theorem \ref{kqdoubb}.  Thus, $\nu_m(W)<0$ forces $\xi_{m,i}^r=-1$, for the appropriate $i,r$.

  So by Lemma \ref{negxi}, we must have that $q=2$, $s\geq 3$, and $n\equiv 2^{s-1}\pm 1\bmod 2^s$.  Furthermore, by Lemma \ref{negxi} and Theorem \ref{kqdoubb}, whenever these conditions on $q$ and $n$ hold then there exist irreducible $H$-modules $W_1,W_2$ with $\nu_m(W_1)<0$ and $\nu_2(W_2)=-1$.

  This establishes the desired equivalences.

  For the last claim, we use the corresponding result proved in Lemma \ref{negxi}.  In particular, taking $k=24$, $n=19$, $m=c=6$, $i=1$, $r=1$, and any $H$-module $W$ yielding the values $i$ and $r$, then $\nu_6(V)=-1$ and $\nu_2(V)=0$.
\end{proof}

Summarizing, we have
\begin{thm}\label{doubintind}
    Let $G\cong\G$ be a group as in Definition \ref{presentation}.  Let $V$ be any (left) module over $\D(G)$.  Then $\nu_m(V)\in\BZ$ for all $m\in\BN$.  Moreover, if $q\nmid l$ and either $q\neq 2$ or $8\nmid k$, then $\nu_m(V)\in\BN\cup\{0\}$ for all $m\in\BN$.
\end{thm}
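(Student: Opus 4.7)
The plan is to reduce, via the semisimplicity of $\D(G)$ and the additivity of indicators on direct sums, to an arbitrary irreducible $\D(G)$-module $V=M(\CO(s),\rho)$ as described by Proposition \ref{MOrho}, and then to split on three cases according to $s$: (a) $s\in Z(G)$, (b) $V$ is Type I, and (c) $V$ is Type II (Definition \ref{typedef}).

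Cases (a) and (b) fall out of material already developed. For (a), Proposition \ref{FxF} reduces $\nu_m(V)$ either to $0$ or to an indicator $\nu_m(\eta)$ of a $G$-module, which by Theorem \ref{kqindsummary} lies in $\{-1,0,1,q-1,q\}\subset\BZ$ always, and in $\{0,1,q-1,q\}$ whenever $q\nmid l$. For (b), Theorem \ref{kqdouba} catalogues the possible values as $0$, $\pm 1$, $q-1$, and $q$; moreover the entry $-1$ only occurs in part (iii) of that theorem, whose hypothesis includes $q\mid l$, so under $q\nmid l$ only nonnegative integers arise.

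The substantive work lies in case (c). Here Theorem \ref{kqdoubb} expresses $\nu_m(V)$ as one of three fractions of denominator $cq$ whose numerators involve $k$, $h_G(m)$, and $\xi_{m,i}^r\in\{-1,1\}$. To extract integrality I will use that $c\mid h_G(m)$ whenever $q\mid m$, which follows from Lemma \ref{minq}(iv): $d\equiv q\bmod c$ forces $c\mid\tfrac{m}{q}(d-q)$, hence $c\mid\gcd\bigl(\tfrac{m}{q}(d-q),k\bigr)=h_G(m)$. Combined with $\xi_{m,i}^r\in\{-1,1\}$ from Lemma \ref{doubblem}(ii), and the fact that $\xi_{m,i}^r=1$ whenever $q>2$, cancelling $c$ reduces integrality to a $q$-divisibility claim. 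For $q=2$ the summand $k(q-2)$ vanishes and the surviving expressions $\xi h_G(m)/c$ and $(\xi h_G(m)-k)/c$ are visibly integers. For $q$ odd, the $q$-divisibility will be extracted from the defining divisibility conditions of each sub-case (for example $kq\mid mdi$, $kq\mid mdr$, or $h_G(m)\mid mi, mr$) together with Corollary \ref{classcor} (linking $q\mid d$ to $q\mid k$) and Lemma \ref{classcor3} (which ensures $q^2\nmid d$ for $q>2$).

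For nonnegativity under the additional hypothesis "$q\neq 2$ or $8\nmid k$", cases (a) and (b) are already dispatched above once $q\nmid l$ is assumed. In case (c), any negative value in Theorem \ref{kqdoubb} must come from $\xi_{m,i}^r=-1$, and Corollary \ref{negind} characterises the existence of such a negative $\xi_{m,i}^r$ as requiring $q=2$, $8\mid k$, and $n\equiv 2^{s-1}\pm 1\bmod 2^s$ (writing $k=2^s x$ with $x$ odd). The hypothesis "$q\neq 2$ or $8\nmid k$" negates the conjunction of the first two of these conditions, so no negative $\xi_{m,i}^r$ can occur and every $\nu_m(V)$ is a nonnegative integer. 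The main obstacle will be the $q$-divisibility step for odd $q$ in case (c); the coset decomposition from the proof of Theorem \ref{kqdoubb} must be shown to cancel precisely modulo $q$ under the arithmetic relations tying $m$, $i$, $r$, $d$, and $k$.
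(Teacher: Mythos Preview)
Your case structure and your treatment of nonnegativity match the paper's: both reduce to irreducibles, split on central/Type I/Type II, and invoke Corollary \ref{negind} (equivalently Lemma \ref{negxi}) for the last claim.

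Where you diverge is the integrality argument in case (c). You propose to verify directly that $cq$ divides each numerator in Theorem \ref{kqdoubb}, and you correctly flag the odd-$q$ divisibility as ``the main obstacle'' without actually carrying it out. The paper bypasses this entirely: once the explicit formulas of Theorems \ref{kqdouba} and \ref{kqdoubb} (together with $\xi_{m,i}^r\in\{-1,1\}$ from Lemma \ref{doubblem}) show that every $\nu_m(V)$ is \emph{rational}, one invokes the general fact that higher indicators are always algebraic integers in a cyclotomic field \cite[p.~17]{KSZ2}. A rational algebraic integer is an ordinary integer, and the proof is complete in one line. Your route would presumably succeed with more arithmetic case-work (tracking when $q\mid h_G(m)/c$ under the side conditions $kq\mid mdi$ etc.), but the paper's algebraic-integer trick makes that labor unnecessary and gives a cleaner, hypothesis-free argument for the first sentence of the theorem.
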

\begin{proof}
  By Lemma \ref{FxF} applied to Corollary \ref{kqindsummary}, Theorem \ref{kqdouba}, and Theorem \ref{kqdoubb}, any indicator of $V$ is clearly a rational number.  Since indicators are all algebraic integers in a cyclotomic field (\cite[pp. 17]{KSZ2}), the indicators are thus all necessarily integers.  The final claim is a weaker version of Corollary \ref{negind}.
\end{proof}

Since the special case $d\equiv 0\bmod k$ and $l=1$ arises in the case of the dihedral groups (see Section \ref{dnexample}) and groups of order $pq$ (see Section \ref{pqexample}), we restate Theorems \ref{kqdoubb} and \ref{doubintind} for this case.
\begin{cor}\label{kqdoubb0}
  Let $G\cong\BZ_k\rtimes_n\BZ_q$ be a group as in Definition \ref{presentation}.  Define $d$ as in equation (\ref{dvalue}), and suppose $d\equiv 0\bmod k$.  Let $j\in\BN$ with $j\not\equiv 0\bmod q$.  Let $\chi$ be an irreducible character of $C_G(a^i b^j)$.  Then the irreducible $\D(G)$-module $V=M(\class(a^i b^j),\chi)$ from Proposition \ref{MOrho} has the following indicators:
  $$\nu_m(V) = \left\{ \begin{array}{cll} \displaystyle{\frac{1}{cq}\left( 2\gcd(m,k) + k(q-2) \right)}&;&q\mid m\\
  0&;&q\nmid m  \end{array} \right.$$
\end{cor}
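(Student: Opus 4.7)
The plan is to specialize Theorem~\ref{kqdoubb} under the two extra hypotheses $l=1$ and $d \equiv 0 \bmod k$. First, because $l=1$ we have $q \nmid l$, so only parts (i) and (ii) of Theorem~\ref{kqdoubb} can apply. The condition $ql \mid mj$ from part (i) becomes $q \mid mj$; since $j \not\equiv 0 \bmod q$ and $q$ is prime, this is equivalent to $q \mid m$. Hence whenever $q \nmid m$ we are in case (i) and $\nu_m(V) = 0$, which matches the lower branch of the claimed formula.

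Now assume $q \mid m$, so we invoke part (ii). The next step is to simplify the two key quantities $h_G(m)$ and $\xi_{m,i}^r$ of Definition~\ref{doubbdef} under the assumption $d \equiv 0 \bmod k$. Write $d = ek$ for some $e \in \BZ$. Since $q \mid m$,
\[
\frac{m}{q}(d-q) \;=\; e\,\frac{m}{q}\,k - m \;\equiv\; -m \bmod k,
\]
so $h_G(m) = \gcd(m, k)$. For $\xi_{m,i}^r = \alpha(a^{rd(m/q)v_i})$, note first that $v_i$ is defined since $\gcd(m,k) \mid mi$ automatically; the exponent $rd(m/q)v_i$ is divisible by $d$ and hence by $k$, so $\xi_{m,i}^r = \alpha(1) = 1$.

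Finally I verify which subcase of part (ii) is triggered. The divisibility $kq \mid mdi$ is automatic: $mdi/(kq) = e(m/q)i \in \BZ$. The same reasoning gives $kq \mid mdr$. The remaining condition $l \mid ms$ is vacuous because $l = 1$. Therefore every admissible $(i,r,\chi)$ falls in the top branch of part (ii), which specializes to
\[
\nu_m(V) \;=\; \frac{1}{cq}\bigl( k(q-2) + 2\,\xi_{m,i}^r\, h_G(m)\bigr) \;=\; \frac{1}{cq}\bigl( 2\gcd(m,k) + k(q-2)\bigr),
\]
as required. There is no real obstacle: the argument is a bookkeeping specialization of Theorem~\ref{kqdoubb}, with the hypothesis $d \equiv 0 \bmod k$ precisely collapsing every $d$-dependent divisibility condition to something automatic and forcing the root of unity $\xi_{m,i}^r$ to equal $1$.
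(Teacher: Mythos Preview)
Your proof is correct and is exactly the intended approach: the paper presents this corollary as a direct restatement of Theorem~\ref{kqdoubb} in the special case $l=1$ and $d\equiv 0\bmod k$, and you have carried out that specialization carefully, checking that $h_G(m)=\gcd(m,k)$, that $\xi_{m,i}^r=1$, and that the divisibility hypotheses of the top branch of part~(ii) are all automatic.
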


\begin{cor}\label{d0intind}
    Let $G\cong\BZ_k\rtimes_n\BZ_q$ be a group as in Definition \ref{presentation}.  Define $d$ as in equation (\ref{dvalue}), and suppose $d\equiv 0\bmod k$.  Then for any $\D(G)$-module $V$, we have $\nu_m(V) \in \BN\cup\{0\}$ for all $m\in\BN$.
\end{cor}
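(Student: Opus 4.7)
The plan is to partition the irreducible $\D(G)$-modules via Proposition \ref{MOrho} into three families --- those indexed by a central conjugacy class, the Type I modules of Definition \ref{typedef}, and the Type II modules --- and then apply the corresponding indicator formula in each case, checking that the non-negativity and integrality conclusions both hold. The first observation to exploit is that the hypothesis $G\cong \BZ_k\rtimes_n\BZ_q$ forces $l=1$, so in particular $q\nmid l$ holds trivially. This immediately places us in the more favorable branches of Theorems \ref{kqnonlinear}, \ref{kqdouba}, and Corollary \ref{kqdoubb0}, where no $-1$ value appears in the piecewise formulas.

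For a central class, I would invoke Proposition \ref{FxF} to reduce $\nu_m(V)$ to either $0$ or the $m$-th Frobenius--Schur indicator of the underlying irreducible $G$-representation $\rho$. By Theorem \ref{kqindsummary} the latter lies in $\{0,1,q-1,q\}$ since $q\nmid l$, and these are all non-negative integers. For the Type I modules I would read off Theorem \ref{kqdouba}, specifically part (ii) for the $q\nmid l$ branch, and observe that each of the four pieces of the piecewise formula yields a value in $\{0,1,q-1,q\}$.

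The heart of the argument is the Type II case, and this is the only place where the extra hypothesis $d\equiv 0\bmod k$ gets used. I would apply Corollary \ref{kqdoubb0}, which under precisely this hypothesis collapses to
\[
\nu_m(V) = \begin{cases} \dfrac{2\gcd(m,k)+k(q-2)}{cq} & q\mid m \\ 0 & q\nmid m \end{cases}.
\]
Since $q$ is prime, $q\geq 2$, so $k(q-2)\geq 0$, while $2\gcd(m,k)>0$ always, making the numerator a positive integer; the value is therefore strictly positive. Rationality is immediate and integrality is already provided by Theorem \ref{doubintind}, so the quotient lies in $\BN$.

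The hardest step is really just the bookkeeping of ensuring that every isomorphism class of irreducible $\D(G)$-module is covered by exactly one of the three subcases and that Corollary \ref{kqdoubb0} has been correctly applied; there is no genuinely new computation to perform, since Theorems \ref{kqdouba}, \ref{kqdoubb} and Corollary \ref{kqdoubb0} have already done all the work. Assembling these pieces completes the proof.
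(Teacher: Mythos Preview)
Your proposal is correct and follows essentially the same route the paper takes: the corollary is stated without proof immediately after Corollary~\ref{kqdoubb0} and Theorem~\ref{doubintind}, and is intended as the evident consequence of combining those two results with the earlier case analysis (Proposition~\ref{FxF} plus Theorems~\ref{kqindsummary} and~\ref{kqdouba} for the central and Type~I modules). Your write-up simply makes this explicit.
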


%------------------------Examples: D_n, pq, semidihedral -----------------------
\section{Examples}\label{examplesect}
In this section we apply our results to compute the indicators for a number of interesting groups and their doubles.
\subsection{The Dihedral Groups}\label{dnexample}
Let $D_k=\cyc{a,b \ | \ a^k = b^2 =1, bab = a^{-1}}$ be the dihedral group of order $2k$.  We fix this representation throughout his section.  Then $D_k \cong \BZ_k \rtimes_{-1}\BZ_2$ in the notation of Definition \ref{presentation}, so we can use our results to find the indicators for $D_k$ and its double $\D(D_k)$.

We first state the basic facts about these groups needed to state these indicators, the proofs of which are trivial and independently well-known:
\begin{cor}\label{dkbasics}
  Consider $D_k\cong \BZ_k\rtimes_{-1}\BZ_2$, the dihedral group of order $2k$.  In the notation of Sections \ref{kqsect} and \ref{kqdoubsect},
  \begin{enumerate}
    \item Let $c=|Z(D_k)|$.  Then $c=1 \iff 2\nmid k$ and $c=2\iff 2\mid k$
    \item $d=0$
    \item If $k$ is odd, then $D_k$ has two irreducible $1$-dimensional representations and $\frac{n-1}{2}$ irreducible $2$-dimensional representations.  They are given by Lemma \ref{kqnonlinchar} and Corollary \ref{kqchar}, and these give all of the irreducible representations of $D_k$.
    \item If $k$ is even, then $D_k$ has four irreducible $1$-dimensional representations and $\frac{n}{2}-1$ irreducible $2$-dimensional representations.  They are given by Lemma \ref{kqnonlinchar} and Corollary \ref{kqchar}, and these give all of the irreducible representations of $D_k$.
  \end{enumerate}
\end{cor}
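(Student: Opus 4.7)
The plan is to derive all four claims as immediate specializations of the earlier general results for $G = \BZ_k \rtimes_n \BZ_{ql}$, applied with parameters $q = 2$, $n = -1$, $l = 1$.

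First I would dispatch (ii) by direct computation: by equation (\ref{dvalue}) we have $d = \sum_{j=0}^{q-1} n^j = 1 + (-1) = 0$. Then for (i), I note that by Lemma \ref{kqcent} the center has order $cl$; since $l = 1$ this gives $|Z(D_k)| = c$. Applying the definition of $c$ in (\ref{cvalue}),
\[
c = \gcd(n-1, k) = \gcd(-2, k) = \gcd(2, k),
\]
which equals $1$ exactly when $k$ is odd and equals $2$ exactly when $k$ is even. This establishes (i).

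For (iii) and (iv) I would appeal to Corollary \ref{kqchar}, which for any group of the form $\G$ produces exactly $cql$ irreducible linear characters and (via Lemma \ref{kqnonlinchar}) exactly $l(k-c)/q$ irreducible $q$-dimensional characters, accounting for all irreducibles. Substituting $q = 2$, $l = 1$, and the value of $c$ from (i): when $k$ is odd one gets $cql = 2$ linear characters and $l(k-c)/q = (k-1)/2$ two-dimensional ones; when $k$ is even one gets $cql = 4$ linear characters and $l(k-c)/q = (k-2)/2 = k/2 - 1$ two-dimensional ones. This is exactly what (iii) and (iv) assert.

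Since each item is a direct substitution into a previously established formula, there is no real obstacle; the only minor point of care is verifying that Definition \ref{presentation} is satisfied by the dihedral presentation, i.e.\ that $q = 2$ is prime, that $q$ divides $|\Aut(\BZ_k)|$ (which holds whenever $k \geq 3$, so that inversion is a nontrivial order-$2$ automorphism), and that $n = -1 \not\equiv 1 \bmod k$ with $n^2 \equiv 1 \bmod k$. Once this identification is made, (i)--(iv) follow immediately and require no further computation.
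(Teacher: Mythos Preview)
Your proposal is correct and matches the paper's approach: the paper itself offers no detailed proof, merely remarking that the facts are ``trivial and independently well-known,'' and your derivation---computing $d$ directly from (\ref{dvalue}), reading off $c=\gcd(2,k)$ from (\ref{cvalue}) and Lemma~\ref{kqcent}, and counting irreducibles via Corollary~\ref{kqchar} and Lemma~\ref{kqnonlinchar}---is exactly the intended specialization. The only cosmetic point is that the statement's ``$\frac{n-1}{2}$'' and ``$\frac{n}{2}-1$'' are evidently typos for $\frac{k-1}{2}$ and $\frac{k}{2}-1$, which you correctly computed.
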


\begin{thm}
  Let $V$ be a 1-dimensional $D_k$-module with character $\chi=\xi_s\otimes\psi_t$ as in Theorem \ref{kq1dim}.  Then
  $$\nu_m(V) = \left\{ \begin{array}{cll} 1 &;& 2\mid ms\\
  0&;& 2\nmid ms \end{array}\right..$$
  In particular, the trivial character has an indicator of $1$ for every $m$.  The non-trivial linear characters have indicator 1 when $2\mid m$ and $0$ otherwise.
\end{thm}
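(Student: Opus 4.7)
The plan is a direct specialization of Theorem~\ref{kq1dim} to the dihedral case, where $q=2$, $l=1$, and $n=-1$. I would first read off from Corollary~\ref{dkbasics} that $c=\gcd(n-1,k)\in\{1,2\}$, with $c=1$ when $k$ is odd and $c=2$ when $k$ is even; the other structural invariant, $d=0$, will not appear since only the $1$-dimensional characters are under consideration. Consequently the entire proof reduces to bookkeeping about the divisibility conditions produced by Theorem~\ref{kq1dim}.

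Next I would substitute $q=2$ and $l=1$ into the formula of Theorem~\ref{kq1dim}. The hypothesis $ql\mid ms$ becomes $2\mid ms$, and the hypothesis $c\mid mt$ is either vacuous (when $k$ is odd and $c=1$, in which case $\psi_t$ is necessarily trivial) or becomes $2\mid mt$ (when $k$ is even and $c=2$). In the odd case only the character $\xi_s$ of $\BZ_2$ varies, and the stated formula is immediate.

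For the ``in particular'' consequences I would argue as follows. The trivial character corresponds to $s=t=0$, so both divisibility conditions are vacuously satisfied and $\nu_m(V)=1$ for every $m\in\BN$. For any non-trivial linear character, at least one of $s,t$ equals $1$ modulo its respective order, so the conjunction of $2\mid ms$ and $c\mid mt$ forces $2\mid m$; conversely, when $2\mid m$ both divisibility conditions are automatic, yielding $\nu_m=1$ in that case and $\nu_m=0$ otherwise.

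The only mild obstacle is handling the parity of $k$ uniformly, since the role of $t$ changes between the two cases (trivial when $k$ is odd, constrained by $2\mid mt$ when $k$ is even). Once this case split is cleanly made, nothing more than an appeal to Theorem~\ref{kq1dim} is required.
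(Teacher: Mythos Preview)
Your approach is correct and is exactly the paper's: the paper's proof consists of the single sentence ``This follows immediately from Theorem~\ref{kq1dim} and Corollary~\ref{dkbasics},'' and you have simply unpacked that specialization. Your handling of the parity of $k$ is in fact more careful than the paper's terse proof, correctly noting that for $k$ even the condition $c\mid mt$ becomes $2\mid mt$ (so that the displayed formula, as literally stated, suppresses the $t$-dependence in that case); the ``in particular'' claims, which are what one actually uses, follow exactly as you describe.
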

\begin{proof}
  This follows immediately from Theorem \ref{kq1dim} and Corollary \ref{dkbasics}.
\end{proof}

\begin{thm}
  Let $V$ be an irreducible 2-dimensional $D_k$-module with character $\phi_j^{D_k}$, with notation as in Lemma \ref{kqnonlinchar}.  Then
  $$\nu_m(V) = \left\{ \begin{array}{cll} 2 &;& 2\mid m \wedge k\mid mj \\
  1 &;& 2\mid m \oplus k\mid mj\\
  0 &;& 2\nmid m \wedge k\nmid mj \end{array}\right.$$
\end{thm}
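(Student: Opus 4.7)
The plan is to deduce this directly from Theorem \ref{kqnonlinear} by specializing all of the parameters to the dihedral setting. By Corollary \ref{dkbasics}, writing $D_k\cong\BZ_k\rtimes_{-1}\BZ_2$ places us in the regime $q=2$, $l=1$, $n\equiv -1\bmod k$, and $d=0$. Since $l=1$, the group $\widehat{\BZ_l}$ is trivial, so in Lemma \ref{kqnonlinchar} the character $\phi_j$ of the statement is exactly $\phi_{r,s}$ with $r=j$ and $s=0$, and part (i) of Theorem \ref{kqnonlinear} (the case $q\nmid l$) is the relevant branch.

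With these substitutions the auxiliary conditions collapse: $ql\mid ms$ becomes $2\mid 0$, $l\mid ms$ becomes $1\mid 0$, and $lq\nmid ms$ becomes $2\nmid 0$; in particular the first two are automatic and the third is impossible. Similarly $kq\mid mdr$ becomes $2k\mid 0$, which is automatic. The four branches of Theorem \ref{kqnonlinear}(i) therefore reduce to
\begin{itemize}
\item $\nu_m(V)=q=2$ when $2\mid m$ and $k\mid mj$,
\item $\nu_m(V)=q-1=1$ when $2\mid m$ and $k\nmid mj$,
\item $\nu_m(V)=1$ when $2\nmid m$ and $k\mid mj$,
\item $\nu_m(V)=0$ when $2\nmid m$ and $k\nmid mj$.
\end{itemize}
Combining the two middle cases under the exclusive-or ``$2\mid m \oplus k\mid mj$'' yields the formula in the statement.

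There is no real obstacle here: the content is all in Theorem \ref{kqnonlinear}, and the step to check is simply that the parameter values $l=1$, $s=0$, $d=0$, $q=2$ trivialize the conditions on $s$ and $d$ as indicated. The only thing requiring any care is verifying that the ``otherwise'' clause of Theorem \ref{kqnonlinear}(i) really collapses to ``$2\nmid m \wedge k\nmid mj$'' under these substitutions, which is immediate once one notes that the disjuncts involving $l\nmid ms$ and $lq\nmid ms$ are false.
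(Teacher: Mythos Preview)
Your proof is correct and takes essentially the same approach as the paper, which simply states that the result follows immediately from Corollary \ref{dkbasics} and Theorem \ref{kqnonlinear}. You have just filled in the specialization details that the paper leaves implicit.
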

\begin{proof}
  This follows immediately from Corollary \ref{dkbasics} and Theorem \ref{kqnonlinear}.
\end{proof}

Now we can state the indicators for the irreducible $\D(D_k)$-modules.  Recall first that those modules in Proposition \ref{MOrho} coming from singleton classes can have their indicators computed directly from the previous two theorems and Proposition \ref{FxF}.  For the remaining modules, we have the following two theorems:

\begin{thm}
  Suppose $a^i\not\in Z(D_k)$ and let $V=M(\class(a_i),\phi_j)$ be the irreducible $\D(D_k)$-module given by Proposition \ref{MOrho}, where $\phi_j$ is the irreducible character of $\cyc{a}$ given by $\phi_j(a)=\mu_k^j$, with $\mu_k$ a fixed primitive $k$-th root of unity.  Then
  $$\nu_m(V) = \left\{ \begin{array}{cll} 2 &;& 2\mid m \wedge k\mid mj \wedge k\mid mi\\
  1&;& 2\mid m \oplus \left( k\mid mj\wedge k\mid mi \right)\\
  0&;& 2\nmid m \wedge k\nmid mj \wedge k\nmid mi. \end{array}\right..$$
\end{thm}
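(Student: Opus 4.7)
The proof is a direct specialization of Theorem \ref{kqdouba} to the dihedral setting. By Corollary \ref{dkbasics}, the group $D_k \cong \BZ_k \rtimes_{-1}\BZ_2$ has parameters $q = 2$, $l = 1$, and $d = 1 + (-1) = 0$. Since $a^i \notin Z(D_k)$ and the element $a^i$ has trivial $b$-component, the module $V = M(\class(a^i), \phi_j)$ fits the setup of Theorem \ref{kqdouba} with $b$-exponent equal to $0$ (so trivially divisible by $q$). Because $l = 1$ we have $C_G(a^i) = \cyc{a, b^q} = \cyc{a}$, so the character $\phi_j$ corresponds to $\chi_{r,s}$ in that theorem with $r = j$ and the second index vacuous (take $s = 0$).

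Next I would invoke part (ii) of Theorem \ref{kqdouba}, which is the applicable branch since $q = 2$ does not divide $l = 1$. The overall hypothesis $ql \mid mj$ (with $j$ the $b$-exponent, which is $0$) is automatic. Feeding $d = 0$ and $s = 0$ into the case conditions trivializes every divisibility of the form $kq \mid md(\cdot)$ as well as $l \mid ms$ and $lq \mid ms$, while forcing $lq \nmid ms$ to fail. The three nonzero clauses of Theorem \ref{kqdouba}.ii thereby reduce to: $\nu_m(V) = q = 2$ precisely when $2 \mid m \wedge k \mid mi \wedge k \mid mj$; $\nu_m(V) = q - 1 = 1$ precisely when $2 \mid m \wedge (k \nmid mi \vee k \nmid mj)$; and $\nu_m(V) = 1$ precisely when $2 \nmid m \wedge k \mid mi \wedge k \mid mj$.

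To finish, I would observe that the two clauses producing the indicator $1$ together assert exactly that one of ``$2 \mid m$'' and ``$k \mid mj \wedge k \mid mi$'' holds but not both, which is precisely the exclusive disjunction $2\mid m \oplus (k \mid mj \wedge k \mid mi)$. The remaining cases then produce $\nu_m(V) = 0$. The whole argument is essentially a bookkeeping specialization, so there is no genuine analytical obstacle; the only point demanding care is checking that the $d = 0$ collapse does not silently void the ``$q - 1$'' clause of Theorem \ref{kqdouba} (it does not, since every $kq$- and $lq$-divisibility hypothesis in that clause becomes tautological), thereby ensuring that the $q - 1 = 1$ contribution is genuinely realized in the dihedral case rather than absorbed into the zero case.
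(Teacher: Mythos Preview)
Your proposal is correct and follows exactly the same route as the paper's proof, which simply cites Corollary \ref{dkbasics} and Theorem \ref{kqdouba}; you have merely spelled out the details of the specialization. The only remark worth adding is that the zero case in the displayed formula should logically read $2\nmid m \wedge (k\nmid mj \vee k\nmid mi)$ rather than the conjunction, but this is a quirk of the statement itself and not of your argument.
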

\begin{proof}
  This follows from Corollary \ref{dkbasics} and Theorem \ref{kqdouba}.
\end{proof}

\begin{thm}
  Let $c=|Z(D_k)|$.  Consider the irreducible $\D(D_k)$-module $V=M(\class(a^i b),\chi)$ given in Proposition \ref{MOrho}, where $\chi$ is any irreducible representation (and thus linear character) of $C_{D_k}(a^i b)$.  Then
  $$\nu_m(V) = \left\{ \begin{array}{cll} \frac{\gcd(m,k)}{c}&;& 2\mid m\\
  0&;& 2\nmid m \end{array}\right..$$
\end{thm}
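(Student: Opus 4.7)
The plan is to invoke Corollary \ref{kqdoubb0} directly, since its hypotheses are precisely what the dihedral setting provides. In the notation of Definition \ref{presentation}, the dihedral group $D_k$ corresponds to $q=2$, $l=1$, and $n=-1$; by Corollary \ref{dkbasics}.ii we have $d=1+(-1)=0$, so the assumption $d\equiv 0\bmod k$ of Corollary \ref{kqdoubb0} is satisfied. The element $a^i b$ has $j=1$, which is not divisible by $q=2$, so the module $V=M(\class(a^i b),\chi)$ is a Type II module in the sense of Definition \ref{typedef}; and by Lemma \ref{kqnormalize}.iii the centralizer $C_{D_k}(a^i b)$ is abelian, so every irreducible character $\chi$ is linear, matching the hypotheses of Theorem \ref{kqdoubb}.

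Specializing the formula in Corollary \ref{kqdoubb0} to $q=2$ then finishes the proof. The term $k(q-2)$ vanishes identically, and the denominator $cq$ becomes $2c$. Hence, for $2\mid m$,
\[
\nu_m(V) \;=\; \frac{1}{2c}\bigl(2\gcd(m,k) + k\cdot 0\bigr) \;=\; \frac{\gcd(m,k)}{c},
\]
while for $2\nmid m$ the corollary gives $\nu_m(V)=0$ immediately, which is exactly the claim.

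There is essentially no obstacle here beyond bookkeeping: the result is a clean specialization. If one preferred a self-contained argument bypassing Corollary \ref{kqdoubb0}, the main work would be redoing the proof of Theorem \ref{kqdoubb} with $d=0$ substituted throughout, so that $h_G(m)=\gcd(-\tfrac{m}{q}q,k)=\gcd(m,k)$ and the parameter $\xi_{m,i}^r$ collapses to $+1$ (using Lemma \ref{doubblem}.ii together with $q=2$ and $d=0$ to see that the relevant exponents are annihilated). The resulting contributions of elements of the three types in Proposition \ref{kqsets} then consolidate into the single formula above. But since Corollary \ref{kqdoubb0} has already packaged this reduction, the shorter route is simply to quote it.
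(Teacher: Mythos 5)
Your proposal is correct and matches the paper's own proof, which likewise obtains the formula as a direct consequence of Corollary \ref{dkbasics} (giving $q=2$, $l=1$, $d=0$) and Corollary \ref{kqdoubb0}. The specialization you carry out, with $k(q-2)=0$ and denominator $cq=2c$ yielding $\gcd(m,k)/c$ for $2\mid m$ and $0$ for $2\nmid m$, is exactly the intended bookkeeping.
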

\begin{proof}
  The formula is a direct consequence of Corollary \ref{dkbasics} and Corollary \ref{kqdoubb0}.
\end{proof}

This completes the determination of all indicators for irreducible modules over $D_k$ and $\D(D_k)$.

%--------------- pq group example ---------------
\subsection{Non-abelian groups of order $pq$}\label{pqexample}
For another example of groups our results apply to, let us consider primes $p,q$ with $p>q$ and $q\mid p-1$.

\begin{lem} \cite[Thm. 4.20]{R}
Let $p,q$ be primes with $p>q$.  Let $G$ be a group of order $pq$.  Then $G$ is non-abelian $\iff$ $q\mid p-1$ and we have the presentation
\begin{eqnarray}\label{pqpres}
    G= \cyc{a,b \ | \ a^p=b^q=1, bab^{-1}=a^n}
\end{eqnarray}
for some $n\in\BN$ with $n^q\equiv 1\bmod p$ but $n\not\equiv 1\bmod p$.
\end{lem}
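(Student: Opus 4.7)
The plan is to invoke the Sylow theorems and the cyclic structure of $\Aut(\BZ_p)$; this is a standard application, and the reference to \cite[Thm.~4.20]{R} suggests the author intends to cite it rather than prove it in detail, so a sketch suffices.

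For the forward direction, suppose $G$ has order $pq$ with $p>q$ prime and $G$ is non-abelian. I would first apply Sylow's theorems to the prime $p$: the number $n_p$ of Sylow $p$-subgroups divides $q$ and is congruent to $1$ modulo $p$. Since $q<p$, the only possibility is $n_p=1$, so the Sylow $p$-subgroup $P=\langle a\rangle\cong\BZ_p$ is normal in $G$. Next I would analyze $n_q$: it divides $p$ and satisfies $n_q\equiv 1\bmod q$, so $n_q\in\{1,p\}$. If $n_q=1$, then the Sylow $q$-subgroup $Q=\langle b\rangle\cong\BZ_q$ is also normal, and since $P\cap Q=\{1\}$ and $|PQ|=pq=|G|$, we would have $G\cong P\times Q\cong\BZ_p\oplus\BZ_q$, which is abelian and contradicts our assumption. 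Hence $n_q=p$, and the congruence $p\equiv 1\bmod q$ gives $q\mid p-1$.

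From here $G\cong P\rtimes Q$, with the action of $Q$ on $P$ given by a homomorphism $\varphi\colon Q\to\Aut(P)$. Since $\Aut(P)\cong\BZ_{p-1}$ is cyclic and $|Q|=q$, the image of $\varphi$ is a (possibly trivial) subgroup of order dividing $q$. Non-abelianness rules out the trivial homomorphism, so $\varphi(b)$ has order exactly $q$, and hence $\varphi(b)$ corresponds to multiplication by some $n\in\BZ$ with $n^q\equiv 1\bmod p$ and $n\not\equiv 1\bmod p$. Writing this action out as $bab^{-1}=a^n$ produces the stated presentation.

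For the converse, assume $q\mid p-1$. Since $\BZ_p^*$ is cyclic of order $p-1$, it contains a (unique, in fact) subgroup of order $q$, so some $n$ with $n^q\equiv 1\bmod p$ and $n\not\equiv 1\bmod p$ exists. Forming the external semidirect product $\BZ_p\rtimes_n\BZ_q$ gives a group with the stated presentation, of order $pq$, and non-abelian because the action is non-trivial; this shows the presentation in (\ref{pqpres}) is realized.

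I do not anticipate any real obstacle: the argument is entirely Sylow-theoretic once one recognizes that $\Aut(\BZ_p)\cong\BZ_{p-1}$ controls the possible actions. The only mild care needed is in tying the existence of a suitable $n$ to the divisibility $q\mid p-1$ for the converse direction, which is immediate from the cyclicity of $\BZ_p^*$.
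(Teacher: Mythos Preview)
Your argument is correct and is the standard Sylow-theoretic proof of this classical fact. The paper itself does not supply a proof at all: the lemma is stated with a citation to \cite[Thm.~4.20]{R} and no argument is given, so there is nothing to compare beyond noting that your sketch is essentially the proof one finds in Rotman. One minor quibble: your converse paragraph constructs a non-abelian group of the stated form, whereas the literal converse of the biconditional is simply that any $G$ with the presentation (\ref{pqpres}) and $n\not\equiv 1\bmod p$ is non-abelian, which is immediate from $bab^{-1}=a^n\neq a$; but this is a matter of phrasing rather than substance.
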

In particular, in the notation of Definition \ref{presentation}, we have that for any such group $G$ that $G\cong \BZ_p \rtimes_n \BZ_q$ and we can apply our results to the indicators of $G$ and $\D(G)$.

Fix now a non-abelian group $G$ of order $pq$ with presentation (\ref{pqpres}).  We first state the basic facts we need about $G$, the proofs of which are trivial.

\begin{cor}\label{pqbasics}
    Let $G$ and its presentation be as above.  Then in the notation of Section \ref{kqsect},
    \begin{enumerate}
      \item $c=|Z(G)|=1$
      \item $d\equiv 0\bmod k$
      \item $G$ has $q$ non-isomorphic 1-dimensional representations, given by Corollary \ref{kqchar}, and $\frac{p-1}{q}$ non-isomorphic irreducible $q$-dimensional representations, given by Lemma \ref{kqnonlinchar}.  These give all of the non-isomorphic irreducible representations of $G$.
    \end{enumerate}
\end{cor}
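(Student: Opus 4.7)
The plan is to specialize the general results of Section \ref{kqsect} to the parameter values $k = p$ and $l = 1$ (so that the relation $b^{ql} = 1$ of Definition \ref{presentation} becomes $b^q = 1$), with the prime $q$ as given. Each of the three parts then follows essentially by substitution, with no new ideas required.

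For (i), I would compute $c = \gcd(n-1, k) = \gcd(n-1, p)$ directly from equation (\ref{cvalue}). Since $p$ is prime and the hypothesis explicitly requires $n \not\equiv 1 \bmod p$, the only possibility is $c = 1$. Lemma \ref{kqcent} then immediately yields $|Z(G)| = cl = 1$, which settles (i).

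For (ii), I would invoke Lemma \ref{minq}.iii, which guarantees $d \equiv 0 \bmod k/c$. Combined with $c = 1$ from the previous step, this specializes to $d \equiv 0 \bmod k$, which is exactly (ii).

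For (iii), Corollary \ref{kqchar} furnishes $cql = q$ pairwise non-isomorphic irreducible linear characters of $G$, and Lemma \ref{kqnonlinchar} furnishes exactly $l(k-c)/q = (p-1)/q$ pairwise non-isomorphic irreducible $q$-dimensional characters. That these exhaust all irreducible characters of $G$ is verified by the dimension count $q \cdot 1^2 + \frac{p-1}{q} \cdot q^2 = q + (p-1)q = pq = |G|$, which is already recorded in the proof of Corollary \ref{kqchar}. There is no serious obstacle here: the entire corollary is a direct specialization of the machinery previously developed for $\G$, and the only nontrivial observation is that $p$ prime together with $n \not\equiv 1 \bmod p$ forces $\gcd(n-1, p) = 1$.
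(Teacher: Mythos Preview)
Your proposal is correct and matches the paper's approach: the paper declares the proofs ``trivial'' and omits them entirely, and your argument --- specializing the general machinery of Section~\ref{kqsect} to $k=p$, $l=1$ --- is exactly the intended trivial specialization. There is nothing to add.
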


We now state the indicators for the irreducible $G$-modules.

\begin{thm}
  Let $G$ be a non-abelian group of order $pq$ as above.  Let $V$ be a 1-dimensional $G$-module with character $\chi=\xi_s$ as in Theorem \ref{kq1dim}.  Then
  $$\nu_m(V) = \left\{ \begin{array}{cll} 1 &;& q\mid ms\\
  0&;& q\nmid ms \end{array}\right..$$
\end{thm}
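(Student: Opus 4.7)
The plan is to apply Theorem \ref{kq1dim} directly, after specializing all parameters to the current setting. Here $G$ is presented as $\BZ_p \rtimes_n \BZ_q$, so in the notation of Definition \ref{presentation} we have $k=p$, $l=1$, and the prime $q$ of the definition is the same $q$. Thus the character $\chi = \xi_s$ from Theorem \ref{kq1dim} has no $\psi_t$ factor (equivalently, $t$ ranges over the trivial character of $\BZ_c$).

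First I would invoke Corollary \ref{pqbasics}(i), which tells us $c = |Z(G)| = 1$. Consequently the divisibility condition $c \mid mt$ appearing in Theorem \ref{kq1dim} is automatically satisfied for every $m$, $t$, and can be dropped. Next, since $l=1$, the second divisibility condition $ql \mid ms$ simplifies to $q \mid ms$. Substituting both simplifications into the piecewise formula of Theorem \ref{kq1dim} yields exactly
\[
\nu_m(V) = \begin{cases} 1 & ; \ q \mid ms \\ 0 & ; \ q \nmid ms \end{cases}
\]
as claimed.

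There is really no obstacle here: the statement is a direct specialization of the general one-dimensional result, and the only content is the observation that the centre is trivial (so the $c$-condition is vacuous) and that the exponent of $b$ is simply $q$ rather than $ql$. No further computation is needed.
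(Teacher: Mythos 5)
Your proposal is correct and is exactly the paper's argument: the paper proves this theorem by citing Corollary \ref{pqbasics} (which gives $c=|Z(G)|=1$) together with Theorem \ref{kq1dim}, and your specialization $k=p$, $l=1$, $c=1$ (making the condition $c\mid mt$ vacuous and $ql\mid ms$ into $q\mid ms$) is precisely that deduction, just written out in detail.
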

\begin{proof}
  This follows from Corollary \ref{pqbasics} and Theorem \ref{kq1dim}.
\end{proof}

\begin{thm}
  Let $G$ be a non-abelian group of order $pq$ as above.  Let $V$ be an irreducible q-dimensional $G$-module with character $\phi_j^G$, with notation as in Lemma \ref{kqnonlinchar}.  Then
  $$\nu_m(V) = \left\{ \begin{array}{cll} q &;&pq\mid m\\
  q-1 &;&q\mid m\wedge p\nmid m\\
  1&;&q\nmid m \wedge p\mid m\\
  0&;&q\nmid m\wedge p\nmid m \end{array}\right..$$
\end{thm}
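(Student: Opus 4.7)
The plan is to derive the result by direct specialization of Theorem \ref{kqnonlinear} to the parameters of a non-abelian group of order $pq$. By Corollary \ref{pqbasics} the relevant parameters will be $k=p$, $l=1$, $c=1$, and $d\equiv 0\bmod p$; since $l=1$ we have $q\nmid l$, so part (i) of Theorem \ref{kqnonlinear} is the case to invoke. The character $\phi_j^G$ from Lemma \ref{kqnonlinchar} corresponds there to $\phi_{r,s}^G$ with $r=j$ and $s=0$, because $\widehat{\BZ_l}=\widehat{\BZ_1}$ is trivial.

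Next I would simplify each divisibility side-condition appearing in Theorem \ref{kqnonlinear}.(i). With $s=0$ and $l=1$, every condition of the form $l\mid ms$, $ql\mid ms$, or $lq\mid ms$ will hold automatically, while $l\nmid ms$ and $lq\nmid ms$ will fail identically. The condition $k\mid mr$ becomes $p\mid mj$, and since $j\not\equiv 0\bmod p$ with $p$ prime this reduces to $p\mid m$. Finally, writing $d=pt$ for some $t\in\BZ$, the condition $kq\mid mdr$ becomes $q\mid mtj$, which will be automatic as soon as $q\mid m$.

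Substituting these into the four branches of Theorem \ref{kqnonlinear}.(i) should yield: the $q$-branch collapses to $pq\mid m$; the $q-1$ branch, after absorbing the now-automatic $kq\mid mdr$, collapses to $q\mid m\wedge p\nmid m$; the $1$-branch collapses to $q\nmid m\wedge p\mid m$; and the residual case $q\nmid m\wedge p\nmid m$ produces $0$. Since these four conditions partition $\BN$, the stated formula will follow. No step is genuinely difficult here; the only real care needed will be in verifying that the $q-1$ and $0$ branches of Theorem \ref{kqnonlinear}.(i) do not spuriously overlap, a fact guaranteed by the automatic validity of $kq\mid mdr$ under $q\mid m$ together with $d\equiv 0\bmod p$.
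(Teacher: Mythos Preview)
Your proposal is correct and follows exactly the paper's approach: the paper's proof consists of the single sentence ``This follows immediately from Corollary \ref{pqbasics} and Theorem \ref{kqnonlinear},'' and you have simply written out the details of that specialization. Your handling of the simplifications (in particular the observation that $kq\mid mdr$ is automatic once $q\mid m$ since $d\equiv 0\bmod p$, so that the $q-1$ branch and the residual $0$ branch do not collide) is accurate and matches what the general theorem encodes via Corollary \ref{classcor2}.
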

\begin{proof}
  This follows immediately from Corollary \ref{pqbasics} and Theorem \ref{kqnonlinear}.
\end{proof}

We can also explicitly state the indicators of irreducible modules over the double $\D(G)$.

\begin{thm}
  Let $G$ be a non-abelian group of order $pq$ as above, and let $i\in\BZ$ with $p\nmid i$.  Let $V=M(\class(a^i),\phi_j)$ be the irreducible $\D(G)$-module given by Proposition \ref{MOrho}, where $\phi_j$ is the irreducible character of $\cyc{a}$ given by $\phi_j(a)=\mu_p^j$, with $\mu_p$ a fixed primitive $p$-th root of unity.  Then
  $$\nu_m(V) = \left\{ \begin{array}{cll} q &;& pq\mid m\\
  q-1&;& q\mid m\wedge p\nmid m\\
  1 &;& q\nmid m\wedge p\mid m\\
  0&;& q\nmid m\wedge p\nmid m \end{array}\right..$$
\end{thm}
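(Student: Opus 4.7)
The plan is to recognize $V = M(\class(a^i), \phi_j)$ as a Type I $\D(G)$-module and then invoke Theorem \ref{kqdouba}.ii. This is the correct branch because $l = 1$ makes the hypothesis $q \nmid l$ trivially true, and because by Corollary \ref{pqbasics} we have $Z(G) = \{1\}$, so $p \nmid i$ guarantees $a^i \notin Z(G)$. The class representative is $a^i b^0$, which satisfies the required $q \mid 0$, so Lemma \ref{kqclass}.ii applies and places $V$ in Type I. The centralizer $C_G(a^i) = \cyc{a, b^q} = \cyc{a}$ since $b^q = 1$, so in the notation of Theorem \ref{kqdouba} the character $\chi_{r,s}$ becomes $\phi_j$ with $r = j$ and $s = 0$.

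Next I would substitute the parameters $k = p$, $l = 1$, $c = 1$, and $d \equiv 0 \bmod p$ (from Corollary \ref{pqbasics} and Lemma \ref{minq}.iii) into each of the four subcases of Theorem \ref{kqdouba}.ii and watch the conjunctions collapse. The divisibility $ql \mid mj$ on the class exponent is just $q \mid 0$; the conditions $lq \mid ms$ and $l \mid ms$ reduce to $q \mid 0$ and $1 \mid 0$ since $s = 0$; the divisibility $k \mid mi$ becomes $p \mid mi$, which, because $p$ is prime and $p \nmid i$, is equivalent to $p \mid m$; and $k \mid mr$ becomes $p \mid mj$, which is either equivalent to $p \mid m$ (if $p \nmid j$) or vacuous (if $p \mid j$), but under conjunction with the other hypotheses it yields the same final outcome either way.

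The only place $d$ enters is through the conditions $kq \mid mdi$ and $kq \mid mdj$. Writing $d = pe$ with $e \in \BZ$, these reduce to $q \mid mei$ and $q \mid mej$, which hold automatically whenever $q \mid m$; this is exactly Corollary \ref{classcor2} in the present setting. Assembling the reductions: case~1 of Theorem \ref{kqdouba}.ii degenerates to $pq \mid m$, yielding $\nu_m(V) = q$; case~2 degenerates to $q \mid m$ with $p \nmid m$, yielding $q - 1$; case~3 degenerates to $q \nmid m$ with $p \mid m$, yielding $1$; and the complementary otherwise-case gives $0$. The main obstacle---a modest bookkeeping one---is verifying that the trivial-character subcase $p \mid j$ does not bifurcate the analysis, but the symmetry of the Theorem \ref{kqdouba}.ii formula in $i$ and $r$ makes the check immediate.
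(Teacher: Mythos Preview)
Your proposal is correct and follows exactly the paper's approach: the paper's proof is simply ``This follows from Corollary \ref{pqbasics} and Theorem \ref{kqdouba},'' and you have carefully unpacked that specialization with the parameters $k=p$, $l=1$, $c=1$, $d\equiv 0\bmod p$, $r=j$, $s=0$. Your case-by-case reduction of the four branches of Theorem \ref{kqdouba}.ii is accurate, including the observation that the $kq\mid mdi$ and $kq\mid mdr$ conditions are automatic under $q\mid m$ via Corollary \ref{classcor2}, and that the $p\mid j$ subcase causes no bifurcation.
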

\begin{proof}
  This follows from Corollary \ref{pqbasics} and Theorem \ref{kqdouba}.
\end{proof}

\begin{thm}
  Let $G$ be a non-abelian group of order $pq$ as above and let $q\nmid j$.  Consider the irreducible $\D(G)$-module $V=M(\class(a^i b^j),\chi)$ given in Proposition \ref{MOrho}, where $\chi$ is any irreducible representation (and thus linear character) of $C_{G}(a^i b^j)$.  Then
  $$\nu_m(V) = \left\{ \begin{array}{cll} p &;& pq\mid m\\
  p-\frac{2(p-1)}{q}&;& p\nmid m \wedge q\mid m\\
  0&;& q\nmid m \end{array}\right..$$
\end{thm}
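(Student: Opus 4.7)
The plan is to recognize that this is a direct application of Corollary \ref{kqdoubb0}, which already handles the case $d \equiv 0 \bmod k$ and $l = 1$, so no new calculation is needed; only a bookkeeping argument specializing the parameters is required.

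First I would invoke Corollary \ref{pqbasics} to record the relevant numerical data for $G$: namely $k = p$, the parameter $l = 1$, $c = |Z(G)| = 1$, and $d \equiv 0 \bmod p$. In particular, since $c = 1$, the hypothesis $q \nmid l$ needed to use the $d \equiv 0 \bmod k$ formula is trivially satisfied, and the module $V = M(\class(a^i b^j), \chi)$ with $q \nmid j$ is precisely of the form covered by Corollary \ref{kqdoubb0}.

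Next I would substitute $c = 1$ and $k = p$ into the formula from Corollary \ref{kqdoubb0} to obtain
\begin{eqnarray*}
\nu_m(V) = \left\{ \begin{array}{cll} \displaystyle{\frac{1}{q}\bigl( 2\gcd(m,p) + p(q-2) \bigr)} &;& q \mid m\\ 0 &;& q \nmid m \end{array}\right.
\end{eqnarray*}
and then split the top branch according to whether $p \mid m$ or not, using that $p$ is prime so $\gcd(m,p) \in \{1, p\}$. If $pq \mid m$, then $\gcd(m,p) = p$ and a short simplification yields $\nu_m(V) = p$. If $q \mid m$ but $p \nmid m$, then $\gcd(m,p) = 1$ and the formula collapses to $\frac{2 + p(q-2)}{q} = p - \frac{2(p-1)}{q}$, which is an integer precisely because $q \mid p-1$ (a hypothesis baked into the existence of $G$).

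There is essentially no obstacle here: the work was done in establishing Corollary \ref{kqdoubb0}, and the present theorem is purely a matter of substituting the specific parameters for a non-abelian group of order $pq$ and simplifying the resulting three cases. The only thing worth double-checking is the arithmetic identity $\frac{2 + p(q-2)}{q} = p - \frac{2(p-1)}{q}$, which follows from $2 + p(q-2) = pq - 2(p-1)$, and the observation that this quantity is automatically an integer under our hypotheses on $p$ and $q$.
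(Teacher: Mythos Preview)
Your proposal is correct and follows essentially the same route as the paper: the paper's proof simply cites Lemma \ref{kqnormalize}, Corollary \ref{pqbasics}, and Corollary \ref{kqdoubb0}, and your argument does exactly this, spelling out the substitution $k=p$, $c=1$, $d\equiv 0$ into Corollary \ref{kqdoubb0} and simplifying the resulting expression in the two subcases $p\mid m$ and $p\nmid m$. The only minor quibble is that your remark about the hypothesis $q\nmid l$ is superfluous, since Corollary \ref{kqdoubb0} is already stated for $l=1$.
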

\begin{proof}
  All claims follow from Lemma \ref{kqnormalize}, Corollary \ref{pqbasics}, and Corollary \ref{kqdoubb0}.
\end{proof}

This completes the determination of all indicators for irreducible modules over a non-abelian group of order $pq$ and its double.

%--------------Semi-dihedral groups----------------
\subsection{Semidihedral groups}\label{semiex}
As our last major class of examples, we consider the semidihedral 2-groups \cite{G}.  Specifically:

\begin{df}\label{sdpres}
For any $3\leq N\in\BN$, define the semidihedral 2-group $G$ by
\begin{eqnarray*}
    G = \cyc{a,b \ | \ a^{2^N}=b^2 = 1, bab = a^{2^{N-1}-1}}.
\end{eqnarray*}
\end{df}
Any such group satisfies Definition \ref{presentation}, with $k=2^N$, $q=2$, $n=2^{N-1}-1$, and so we may apply our results to it.  We fix for now a group having the above presentation.

We first state the basic facts we need about $G$, the proofs of which are trivial.

\begin{cor}\label{sdbasics}
    Let $G$ be a semidihedral 2-group as in Definition \ref{sdpres}.  Then in the notation of Sections \ref{kqsect} and \ref{kqdoubsect},
    \begin{enumerate}
      \item $c=|Z(G)|=2$
      \item $d\equiv 2^{N-1}\equiv \frac{k}{c}\bmod 2^N$
      \item $G$ has four non-isomorphic 1-dimensional representations, given by Corollary \ref{kqchar}, and $2^{N-1}-1$ non-isomorphic irreducible $2$-dimensional representations, given by Lemma \ref{kqnonlinchar}.  These give all of the non-isomorphic irreducible representations of $G$.
    \end{enumerate}
\end{cor}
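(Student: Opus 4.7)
The proof is a direct unpacking of definitions and applications of earlier results, with essentially no obstacle beyond bookkeeping. Here is how I would organize it.

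For part (i), the plan is to compute $c=\gcd(n-1,k)$ directly with the given values $n=2^{N-1}-1$ and $k=2^N$: factoring $n-1 = 2^{N-1}-2 = 2(2^{N-2}-1)$, the odd factor $2^{N-2}-1$ (which is indeed odd because $N\geq 3$) contributes nothing to $\gcd$ with $2^N$, leaving $c=2$. Since $l=1$ in the semidihedral presentation, Lemma \ref{kqcent} immediately gives $|Z(G)| = c\cdot l = 2$.

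For part (ii), I would just compute $d$ from its definition in equation (\ref{dvalue}): since $q=2$,
\begin{equation*}
d = 1 + n = 1 + (2^{N-1}-1) = 2^{N-1},
\end{equation*}
and plugging in $k/c = 2^N/2 = 2^{N-1}$ yields the stated double congruence mod $2^N$. This also serves as a sanity check of Lemma \ref{minq}.iii, since $d$ is manifestly divisible by $k/c$.

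For part (iii), I would invoke Corollary \ref{kqchar}, which asserts that $G$ has exactly $cql$ irreducible linear characters and that every remaining irreducible character is one of the $q$-dimensional characters constructed in Lemma \ref{kqnonlinchar}. Substituting $c=2$, $q=2$, $l=1$ gives $cql = 4$ linear characters, and Lemma \ref{kqnonlinchar} supplies $l(k-c)/q = (2^N-2)/2 = 2^{N-1}-1$ non-isomorphic irreducible $2$-dimensional characters. As a verification (not strictly needed, since Corollary \ref{kqchar} already guarantees completeness), the dimension count is
\begin{equation*}
4\cdot 1^2 + (2^{N-1}-1)\cdot 2^2 = 4 + 2^{N+1}-4 = 2^{N+1} = |G|,
\end{equation*}
confirming that we have accounted for every irreducible representation.

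There is no genuine obstacle here; the only point requiring a moment of care is verifying the oddness of $2^{N-2}-1$, which needs the hypothesis $N\geq 3$ built into Definition \ref{sdpres}. Everything else is arithmetic substitution into the general formulas of Sections \ref{kqsect} and \ref{kqdoubsect}.
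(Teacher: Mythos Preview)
Your proposal is correct and matches the paper's approach: the paper declares these facts trivial and omits the proof entirely, and your explicit verification by substituting $k=2^N$, $q=2$, $l=1$, $n=2^{N-1}-1$ into the general formulas of Sections \ref{kqsect} and \ref{kqrepsect} is exactly the intended argument.
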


We now state the indicators for the irreducible $G$-modules.

\begin{thm}
  Let $G$ be a semidihedral 2-group as in Definition \ref{sdpres}.  Let $V$ be a 1-dimensional $G$-module with character $\chi=\xi_s\otimes \psi_t\in\widehat{\BZ_2}\otimes\widehat{\BZ_2}$ as in Theorem \ref{kq1dim}.  Then
  $$\nu_m(V) = \left\{ \begin{array}{cll} 1 &;& 2\mid ms \wedge 2\mid mt\\
  0&;& 2\nmid ms \vee 2\nmid mt\end{array}\right..$$
\end{thm}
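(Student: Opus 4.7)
The plan is to apply Theorem \ref{kq1dim} directly, since the semidihedral group fits into the framework of Definition \ref{presentation} with the explicit parameter values recorded in Corollary \ref{sdbasics}.

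First I would observe that the semidihedral presentation gives $k=2^N$, $q=2$, $l=1$, and $n=2^{N-1}-1$, all of which satisfy the hypotheses of Definition \ref{presentation} (in particular $n^2 \equiv 1 \bmod 2^N$ but $n \not\equiv 1 \bmod 2^N$ for $N \geq 3$). By Corollary \ref{sdbasics}(i), the constant $c$ from equation (\ref{cvalue}) equals $|Z(G)| = 2$. Thus the two divisibility conditions appearing in Theorem \ref{kq1dim}, namely $c \mid mt$ and $ql \mid ms$, reduce respectively to $2 \mid mt$ and $2 \mid ms$.

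Next I would identify the character $\chi$ in the statement with the character appearing in Theorem \ref{kq1dim}. The 1-dimensional characters of $G$ come from $\widehat{\BZ_{ql}} \otimes \widehat{\BZ_c} = \widehat{\BZ_2} \otimes \widehat{\BZ_2}$ by Corollary \ref{kqchar}, matching the parametrization in the statement. With $\xi_s(b) = \mu_{ql}^s = (-1)^s$ and $\psi_t(a^{k/c}) = \mu_c^t = (-1)^t$ (where $a^{k/c} = a^{2^{N-1}}$ generates the $\BZ_c$ factor of $Z(G)$), the character $\chi = \xi_s \otimes \psi_t$ agrees with the one named in Theorem \ref{kq1dim}.

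Substituting these identifications into the conclusion of Theorem \ref{kq1dim} yields
\begin{eqnarray*}
\nu_m(V) = \left\{ \begin{array}{cll} 1 &;& 2 \mid mt \wedge 2 \mid ms\\ 0 &;& 2 \nmid mt \vee 2 \nmid ms \end{array}\right.,
\end{eqnarray*}
which is exactly the stated formula. Because this is a direct specialization of an already-proved theorem to explicit parameters tabulated in Corollary \ref{sdbasics}, there is no genuine obstacle; the only thing to be careful about is verifying that the parametrization of the linear characters in the statement matches the one used in Theorem \ref{kq1dim}, which I would do by noting that $ql = c = 2$ forces both $\xi_s$ and $\psi_t$ to be the sign character (trivial or non-trivial) on a group of order $2$.
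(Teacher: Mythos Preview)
Your proposal is correct and follows exactly the same approach as the paper, which simply cites Corollary \ref{sdbasics} and Theorem \ref{kq1dim}. Your version is more explicit about matching parameters and verifying the character parametrizations agree, but the underlying argument is identical.
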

\begin{proof}
  This follows from Corollary \ref{sdbasics} and Theorem \ref{kq1dim}.
\end{proof}

\begin{thm}
  Let $G$ be a semidihedral 2-group as in Definition \ref{sdpres}.  Let $V$ be an irreducible $2$-dimensional $G$-module with character $\phi_j^G$, with notation as in Lemma \ref{kqnonlinchar}.  Then
  $$\nu_m(V) = \left\{ \begin{array}{cll} 2 &;&2\mid m \wedge 2^N \mid mj\\
  1&;&2\mid m \wedge 2^N\nmid mj \wedge 4\mid mj\\
  0&;&2\nmid m \vee (2\mid m \wedge 4\nmid mj)\end{array}\right..$$
\end{thm}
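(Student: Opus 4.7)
The plan is to specialize the general formula of Theorem \ref{kqnonlinear} to the semidihedral setting using the parameter values supplied by Corollary \ref{sdbasics}. Concretely, we have $k=2^N$, $q=2$, $l=1$, $c=2$ and $d\equiv 2^{N-1}\bmod 2^N$, so $n\cdot d \equiv 2^{N-1}\bmod 2^N$ as well. Because $q=2\nmid 1=l$, we fall in part (i) of Theorem \ref{kqnonlinear}, and since $l=1$ the second character-parameter $s$ from Lemma \ref{kqnonlinchar} is forced to be $0$; thus every hypothesis of the form $l\mid ms$ or $ql\mid ms$ is automatic, and $\phi_{j,s}^G$ reduces to the stated $\phi_j^G$.

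First I would handle the top branch: the clause $q\mid m\wedge k\mid mr\wedge ql\mid ms$ simplifies to $2\mid m\wedge 2^N\mid mj$, yielding $\nu_m(V)=q=2$. Next, for the $q-1$ clause I would substitute $d\equiv 2^{N-1}\bmod 2^N$ into $kq\mid mdr$, which becomes $2^{N+1}\mid m\cdot 2^{N-1}\cdot j$, i.e.\ $4\mid mj$; combined with $q\mid m\wedge k\nmid mr$ this gives the branch $2\mid m\wedge 2^N\nmid mj\wedge 4\mid mj$ with value $q-1=1$.

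The key observation — the one point where care is needed — is that the third clause of Theorem \ref{kqnonlinear}.i, namely $q\nmid m\wedge k\mid mr\wedge l\mid ms$, turns out to be vacuous here. Indeed, if $m$ is odd then $2^N\mid mj$ would force $2^N\mid j$; but Lemma \ref{kqnonlinchar} requires $j\not\equiv 0\bmod k/c=2^{N-1}$ for $\phi_j^G$ to be irreducible, so in particular $2^N\nmid j$. Hence no irreducible $2$-dimensional module contributes through this branch, and when $2\nmid m$ we land in the $0$-clause.

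Finally I would collect the remaining cases into the $0$-branch: using the irreducibility constraint just mentioned, the zero-condition $k\nmid mr\vee(q\mid m\wedge lq\nmid ms)$ from Theorem \ref{kqnonlinear}.i reduces, after the substitutions above, exactly to $2\nmid m\vee(2\mid m\wedge 4\nmid mj)$. This matches the stated formula. The only real obstacle is making the vacuity argument for the $q\nmid m$ branch rigorous; aside from that the proof is a careful bookkeeping exercise in substituting the specific values of $k,q,l,c,d$ into Theorem \ref{kqnonlinear}.
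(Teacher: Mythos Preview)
Your proposal is correct and follows essentially the same route as the paper's own proof, which simply invokes Theorem \ref{kqnonlinear}, Corollary \ref{sdbasics}, irreducibility, and $N\geq 3$; you have merely unpacked those references explicitly. One small expository point: the stated zero-clause in Theorem \ref{kqnonlinear}(i) does not literally substitute to $2\nmid m\vee(2\mid m\wedge 4\nmid mj)$ (with $l=1$, $s=0$ it collapses to $2^N\nmid mj$, which overlaps with the $q-1$ clause), so the cleanest way to obtain the final zero-branch is exactly what you in fact did---take the complement of the first two branches after ruling out the third by irreducibility---rather than by direct substitution into the printed zero-condition.
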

\begin{proof}
  This follows immediately from $N\geq 3$, the fact that $V$ must be irreducible, Corollary \ref{sdbasics}, and Theorem \ref{kqnonlinear}.
\end{proof}

\begin{thm}
  Let $G$ be a semidihedral 2-group as in Definition \ref{sdpres}, and let $i\in\BZ$ $i\not\equiv 0\bmod 2^{N-1}$.  Let $V=M(\class(a^i),\phi_j)$ be the irreducible $\D(G)$-module given by Proposition \ref{MOrho}, where $\phi_j$ is the irreducible character of $\cyc{a}$ given by $\phi_j(a)=\mu^j$, with $\mu$ a fixed primitive $2^N$-th root of unity.  Then
  $$\nu_m(V) = \left\{ \begin{array}{cll} 2 &;& 2\mid m \wedge 2^N\mid mi \wedge 2^N\mid mj\\
  1 &;& 2\mid m \wedge 4\mid mi\wedge 4\mid mj\wedge (2^N\nmid mi\vee 2^N\nmid mj)\\
  0&;& (2\nmid m \vee 4\nmid mi \vee 4\nmid mj) \wedge (2^N\nmid mi \vee 2^N\nmid mj) \end{array}\right..$$
\end{thm}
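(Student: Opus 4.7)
The plan is to recognize $V$ as a Type I module in the sense of Definition \ref{typedef} and specialize Theorem \ref{kqdouba} to the semidihedral parameters. Since $i \not\equiv 0 \bmod 2^{N-1}$ and $Z(G) = \langle a^{2^{N-1}}\rangle$ by Lemma \ref{kqcent} together with Corollary \ref{sdbasics}(i), we have $a^i \notin Z(G)$; the class representative $a^i = a^i b^0$ then falls under part (i) of Lemma \ref{kqclass} (with $q \mid 0$), so $V$ is of Type I. By Lemma \ref{kqnormalize}(ii) the centralizer $C_G(a^i) = \langle a, b^q\rangle$ collapses to $\langle a\rangle \cong \BZ_{2^N}$ because $l = 1$ forces $b^q = b^2 = 1$, and its irreducible characters are precisely the $\phi_j$ of the statement, corresponding to the $\chi_{r,s}$ of Lemma \ref{kqnonlinchar} under $r = j$ and $s = 0$ (vacuous).

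I would then apply Theorem \ref{kqdouba}(ii), the relevant branch since $q = 2 \nmid 1 = l$. Under the substitutions $k = 2^N$, $q = 2$, $l = 1$, and $d \equiv 2^{N-1} \bmod 2^N$ from Corollary \ref{sdbasics}(ii), the divisibility hypotheses simplify as follows: the conditions $ql \mid m j_{\mathrm{cls}}$, $l \mid ms$, and $lq \mid ms$ are all automatic because the class exponent $j_{\mathrm{cls}} = 0$ and $s = 0$; the remaining conditions become $k \mid mi \iff 2^N \mid mi$, $k \mid mr \iff 2^N \mid mj$, $kq \mid mdi \iff 2^{N+1} \mid m \cdot 2^{N-1} i \iff 4 \mid mi$, and similarly $kq \mid mdr \iff 4 \mid mj$. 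The first two positive-value branches of Theorem \ref{kqdouba}(ii), of values $q = 2$ and $q - 1 = 1$, then translate verbatim into the first two cases of the present theorem.

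The one step requiring a genuine argument is to show that the third positive-value branch of Theorem \ref{kqdouba}(ii), namely $\nu_m(V) = 1$ under $(q \nmid m \vee lq \nmid ms) \wedge k \mid mi \wedge k \mid mr \wedge l \mid ms$, is vacuous here. Since $s = 0$ makes $lq \nmid ms$ false, this reduces to $2 \nmid m$ together with $2^N \mid mi$ and $2^N \mid mj$; but $m$ odd with $2^N \mid mi$ forces $2^N \mid i$, contradicting $2^{N-1} \nmid i$. With this branch excluded, the ``otherwise'' clause of Theorem \ref{kqdouba}(ii) yields $\nu_m(V) = 0$ whenever neither of the first two cases applies, and a brief Boolean simplification (using $2^N \mid x \Rightarrow 4 \mid x$ for $N \geq 3$) rewrites the complement in the form stated. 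I do not expect any substantive obstacle beyond this vacuity argument; the remainder is routine bookkeeping of divisibility conditions.
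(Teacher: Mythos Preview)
Your proposal is correct and follows exactly the approach the paper takes: the paper's own proof simply cites $N\geq 3$, the assumptions on $i$ and $j$, Corollary \ref{sdbasics}, and Theorem \ref{kqdouba}, which is precisely the specialization you carry out in detail. Your vacuity argument for the third branch is the substance behind the paper's appeal to ``assumptions on $i$ and $j$'' and ``$N\geq 3$''.
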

\begin{proof}
  This follows from $N\geq 3$, assumptions on $i$ and $j$, Corollary \ref{sdbasics} and Theorem \ref{kqdouba}.
\end{proof}

\begin{thm}
  Let $G$ be a semidihedral 2-group as in Definition \ref{sdpres}, and let $i\in\BN$.
  Consider the irreducible $\D(G)$-module $V=M(\class(a^i b),\chi)$ given in Proposition \ref{MOrho},
  where $\chi$ is any irreducible representation (and thus linear character) of $C_{G}(a^i b)$.  Then
  \renewcommand\arraystretch{2.25} %Make the array have thicker rows for readability purposes
    \begin{eqnarray*}
        \nu_m(V) = \left\{ \begin{array}{cll}
            \displaystyle{\frac{\gcd(m,2^N)}{2}}&;& 2\mid m \wedge \left( 4\mid m \vee \chi|_{Z(G)}=1\vee 2\mid i\right)\\
            -1&;& 2\mid m\wedge 4\nmid m \wedge \chi|_{Z(G)}\neq 1 \wedge 2\nmid i \\
            0&;& \mbox{otherwise}
        \end{array}\right..
  \end{eqnarray*}
\end{thm}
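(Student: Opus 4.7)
The plan is to apply Theorem \ref{kqdoubb}(ii) to the Type II $\D(G)$-module $V=M(\class(a^ib),\chi)$, using the semidihedral parameters collected in Corollary \ref{sdbasics}: $k=2^N$, $q=2$, $l=1$, $c=2$, and $d\equiv k/c\bmod k$. Since $q\nmid l$, only case (ii) of that theorem is relevant; with $j=1$, the hypothesis $ql\mid mj$ collapses to $2\mid m$, which is precisely what forces $\nu_m(V)=0$ in the ``otherwise'' branch of the claim.

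First I would set up the character data: since $l=1$ we have $Z(G)=\cyc{a^{k/c}}\cong\BZ_2$, so one may write $\chi|_{Z(G)}=\phi^r$ with $r\in\{0,1\}$, where $\chi|_{Z(G)}=1$ is equivalent to $r$ even. Next I would compute $h_G(m)=\gcd(m(2^{N-2}-1),2^N)=\gcd(m,2^N)$, which is valid because $N\geq 3$ makes $2^{N-2}-1$ odd. The observation $h_G(m)\mid m$ then makes the divisibility requirements $h_G(m)\mid mi$ and $h_G(m)\mid mr$ automatic, while $l\mid ms$ is vacuous. Hence for every $2\mid m$ the two nonzero cases of Theorem \ref{kqdoubb}(ii) coalesce into the single value
\[
\nu_m(V) \;=\; \frac{\xi_{m,i}^r\,\gcd(m,2^N)}{2}.
\]

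The central step is the determination of $\xi_{m,i}^r$. Using $d\equiv k/c\bmod k$ together with $\alpha(a^{k/c})=-1$ (a primitive $c$-th root of unity, with $c=2$), Definition \ref{doubbdef} gives $\xi_{m,i}^r=(-1)^{r(m/2)v_i}$, while the proof of Lemma \ref{negxi} shows that the auxiliary integer $u$ is odd when $q=2$. The task therefore reduces to tracking the parity of $r\cdot\frac{m^2 i}{2h_G(m)}$. Writing $m=2^a m'$ with $m'$ odd and $a\geq 1$, one computes $\frac{m^2}{2h_G(m)}=2^{a-1}(m')^2$ when $a<N$ and a multiple of $4$ when $a\geq N$. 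I conclude that $\xi_{m,i}^r=1$ whenever $4\mid m$, and $\xi_{m,i}^r=(-1)^{ri}$ when $m\equiv 2\bmod 4$.

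It remains to assemble the cases: when $\xi_{m,i}^r=1$ the formula reads $\gcd(m,2^N)/2$, matching the top branch of the claim (covering $4\mid m$, or $4\nmid m$ combined with $2\mid i$ or $r$ even). When $\xi_{m,i}^r=-1$---which forces $m\equiv 2\bmod 4$, $r$ odd and $i$ odd---one has $\gcd(m,2^N)=2$, producing the value $-1$ and matching the middle branch. The main obstacle here is the parity bookkeeping for $\xi_{m,i}^r$; all other hypotheses in Theorem \ref{kqdoubb}(ii) either trivialize because $l=1$ or are automatic because $h_G(m)\mid m$, so essentially all the content lies in that single computation.
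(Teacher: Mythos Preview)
Your proposal is correct and follows the same approach as the paper: the paper's proof simply states that the result follows from Lemma \ref{kqnormalize}, Corollary \ref{sdbasics}, and Theorem \ref{kqdoubb}, and you have carried out exactly this specialization with full detail. One small remark: to justify that $u$ is odd you need not invoke Lemma \ref{negxi}; Definition \ref{doubbdef} already notes that $u$ may be taken to be a unit modulo $k=2^N$, hence odd.
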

\begin{proof}
  All claims follow from Lemma \ref{kqnormalize}, Corollary \ref{sdbasics}, and Theorem \ref{kqdoubb}.
\end{proof}

This completes the determination of all indicators for irreducible modules over a semidihedral 2-group and its double.

%-------------------- Structure for generalized quaternion groups -----------------------------------------
\section{The Generalized Quaternion Groups $Q_{4n}$}\label{quatsect}

We wish to conclude now by computing the indicators for another family of groups and their doubles.  In particular, we consider the generalized quaternion groups.
\begin{df}\label{quatpres}
For $2\leq n\in\BN$, the generalized quaternion group with $4n$ elements is
\begin{eqnarray*}
  Q_{4n} = \cyc{a,b \ | \ a^{2n}=1, b^2 = a^{n}, bab^{-1} = a^{-1}} && n\geq 2.
\end{eqnarray*}
\end{df}
This presentation is very similar to that of the dihedral groups, and much of what we have done in the previous sections for $G\cong \G$ (with $l=1$ in particular) will apply to the quaternion groups with only minor changes.  Furthermore, the case when $n$ is a power of 2 gives the remaining isomorphism classes of non-abelian $2$-groups $G$ with $[G:G']=4$ \cite[Theorem III 11.9(a)]{Hu}.

We begin, as before, by considering the structure of the group $Q_{4n}$.

\begin{lem}\label{quatident}
  Let $Q_{4n}$ be as in Definition \ref{quatpres}.  Then for any $i,j\in\BZ$ the following identities hold:
  \begin{enumerate}
    \item $b a^i = a^{-i} b$
    \item If $2\mid j$, then $(a^i b)^j = b^j$.
    \item If $2\nmid j$, then $(a^i b)^j = a^i b^j$.
  \end{enumerate}
\end{lem}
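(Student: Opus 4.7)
The plan is to establish the three identities in order, since (ii) and (iii) will rely on (i), and (iii) will build directly on (ii).

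For (i), I would start from the defining relation $bab^{-1}=a^{-1}$, which is equivalent to $ba=a^{-1}b$. A straightforward induction on $|i|$ (or simply computing $ba^i=(bab^{-1})^i b=a^{-i}b$) gives the identity for all $i\in\BZ$. This is essentially the same as Lemma \ref{sdpident}(ii), specialized to $n\equiv -1$.

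For (ii), the key observation is that $(a^i b)^2 = a^i(ba^i)b = a^i a^{-i}b^2 = b^2 = a^n$ using (i) and the defining relation $b^2=a^n$. Since $a^n$ clearly commutes with $a$, and by (i) we have $ba^n=a^{-n}b=a^n b$ (because $a^{2n}=1$), the element $a^n=b^2$ lies in $Z(Q_{4n})$. Thus $(a^i b)^{2k}=((a^i b)^2)^k=(b^2)^k=b^{2k}$, which proves (ii). A useful byproduct is that $b^{2k}$ is central for all $k$, which I will invoke in (iii).

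For (iii), write $j=2k+1$ with $k\in\BZ$. Then
\[
(a^i b)^{2k+1} = (a^i b)^{2k}\cdot a^i b = b^{2k}\cdot a^i b,
\]
where the second equality uses (ii). Since $b^{2k}=(b^2)^k=a^{nk}$ is central, it commutes with $a^i$, so $b^{2k}a^i b = a^i b^{2k} b = a^i b^{2k+1}$, which is the claimed formula.

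I do not expect any serious obstacle here — the lemma is the analogue of Lemma \ref{sdpident} for the quaternion groups, with the twist $n\mapsto -1$ and the relation $b^2=a^n$ replacing $b^2=1$. The only subtlety is keeping track of the fact that $b^2$ is central (rather than trivial as in the dihedral case), which is exactly what makes the formulas in (ii) and (iii) differ from the dihedral analogues only by the substitution of $b^{2k}$ for $1$ in the even case.
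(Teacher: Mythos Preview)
Your proposal is correct and follows the natural direct approach from the presentation; the paper itself simply states that ``these are all easy consequences of the presentation'' without writing out any details, so your argument is exactly the kind of verification the paper omits.
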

\begin{proof}
  These are all easy consequences of the presentation.
\end{proof}
This readily establishes what the center of $Q_{4n}$ is:
\begin{cor}\label{quatcent}
    Let $Q_{4n}$ be a generalized quaternion group with presentation (\ref{quatpres}).  Then
    $$Z(G) = \cyc{a^{n}}\cong \BZ_2.$$
\end{cor}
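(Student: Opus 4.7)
The plan is to verify the claimed description of $Z(Q_{4n})$ by first showing $\langle a^n\rangle \subseteq Z(Q_{4n})$, and then showing that no other element is central by writing a general element of $Q_{4n}$ in a canonical form and checking when it commutes with both generators $a$ and $b$. Since $b^2 = a^n$, every element of $Q_{4n}$ can be written uniquely as $a^i b^j$ with $0 \leq i < 2n$ and $j \in \{0,1\}$, so it suffices to check centrality only on these representatives.

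For the containment $\langle a^n\rangle \subseteq Z(Q_{4n})$, I would simply observe that $a^n$ has order $2$ (since $a^{2n} = 1$), so that $a^{-n} = a^n$, and therefore by Lemma \ref{quatident}.i we have
\[
b a^n b^{-1} = a^{-n} = a^n.
\]
Clearly $a^n$ also commutes with $a$, hence with every element of $Q_{4n}$, giving the inclusion.

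For the reverse inclusion, I would consider an arbitrary element $g = a^i b^j$ with $j \in \{0,1\}$. If $j = 0$, then centrality of $a^i$ with respect to $b$ requires $b a^i b^{-1} = a^{-i} = a^i$ by Lemma \ref{quatident}.i, which forces $a^{2i} = 1$, hence $n \mid i$; so $g \in \{1, a^n\} = \langle a^n\rangle$. If $j = 1$, then $g = a^i b$ must commute with $a$, and a direct computation using Lemma \ref{quatident}.i gives
\[
a(a^i b)a^{-1} = a^{i+1} b a^{-1} = a^{i+1} a b = a^{i+2} b,
\]
so centrality forces $a^2 = 1$, i.e.\ $2n \mid 2$. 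Since $n \geq 2$ by the definition of $Q_{4n}$, this is impossible, so no element of the form $a^i b$ is central. Combining the two cases yields $Z(Q_{4n}) = \langle a^n\rangle$, and since $a^n$ has order $2$, the isomorphism with $\BZ_2$ is immediate.

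The proof is essentially a direct unwinding of the defining relations, so there is no real obstacle; the only point requiring a moment's care is noting that $n \geq 2$ is exactly what rules out nontrivial central elements outside $\langle a\rangle$, which is consistent with the fact that $Q_8$ (the case $n = 2$) and all larger generalized quaternion groups have center of order precisely $2$.
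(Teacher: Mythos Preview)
Your proof is correct and follows the same approach the paper has in mind: the paper simply states that the corollary ``readily'' follows from Lemma~\ref{quatident} without writing out a proof, and your argument is precisely the direct verification using those identities. The case split on $j\in\{0,1\}$ and the use of $n\geq 2$ to exclude elements of the form $a^ib$ are exactly the expected details.
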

We also have the analogue of Lemma \ref{kqclass} for $Q_{4n}$.
\begin{lem}\label{quatclass}
    Let $Q_{4n}$ be a generalized quaternion group with presentation (\ref{quatpres}) and let $i\in\BZ$.
    \begin{enumerate}
      \item If $a^i\not\in Z(G)$, then $\class(a^i) = \{a^i, a^{-i}\}$ and $|\class(a^i)|=2$.  In particular, there are $2n-1$ distinct conjugacy classes of non-central powers of $a$.
      \item $\class(a^i b) = \{ a^{i-2s}b\}_{s=0}^{n-1}$ and $|\class(a^i b)|= n$.  Thus there are two distinct conjugacy classes of this form, depending on whether $i$ is even or odd.
      \item All conjugacy classes in $Q_{4n}$ are either singletons or one of the above.
    \end{enumerate}
\end{lem}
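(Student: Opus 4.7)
The plan is to mimic closely the strategy of Lemma \ref{kqclass}, exploiting Lemma \ref{quatident} to reduce every computation to the relations $bab^{-1}=a^{-1}$ and $b^2=a^n$. First I would establish (iii) as a normal form statement: by Lemma \ref{quatident}.ii--iii, every element of $Q_{4n}$ can be written uniquely as $a^i$ or $a^i b$ with $0\leq i<2n$, so every conjugacy class either consists of central elements (singletons, namely $\{1\}$ and $\{a^n\}$ by Corollary \ref{quatcent}), or lies entirely in $\langle a\rangle\setminus Z(G)$, or lies entirely in the coset $\langle a\rangle b$. It therefore suffices to analyze the latter two cases.

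For (i), I would compute conjugates of $a^i$ by a generic element. Conjugation by $a^j$ is trivial, and conjugation by $a^j b$ gives
\[
(a^j b)a^i(a^j b)^{-1} = a^j(bab^{-1})^i a^{-j} = a^j a^{-i}a^{-j} = a^{-i},
\]
using $bab^{-1}=a^{-1}$. Thus $\class(a^i) = \{a^i, a^{-i}\}$, and this set has two elements precisely when $a^{2i}\neq 1$, equivalently when $a^i\notin Z(G)=\langle a^n\rangle$. Counting then: there are $2n-2$ non-central powers of $a$, paired into conjugacy classes of size $2$, yielding the claimed count of non-singleton classes among the powers of $a$ (I would flag that a direct count gives $n-1$ such classes, which should probably be the intended number in the statement).

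For (ii), the key computation is conjugation of $a^i b$ by both $a^j$ and $a^j b$. Using Lemma \ref{quatident}.i,
\[
a^j(a^i b)a^{-j} = a^j a^i(ba^{-j}) = a^j a^i a^{j}b = a^{i+2j}b,
\]
and
\[
(a^j b)(a^i b)(a^j b)^{-1} = (a^j b)(a^i b)(a^j b^{-1}),
\]
which after using $ba^i = a^{-i}b$ and $b^2=a^n$ simplifies to $a^{2j-i}b$. Since $-i$ and $i$ have the same parity, the two families $\{a^{i+2j}b\}$ and $\{a^{2j-i}b\}$ coincide as $j$ varies, so $\class(a^i b)=\{a^{i-2s}b\}_{s=0}^{n-1}$, a set of size $n$. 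The parity of $i$ is a conjugacy invariant, giving exactly the two classes $\class(b)$ and $\class(ab)$.

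The main obstacle, if any, is just keeping the calculation of $(a^j b)(a^i b)(a^j b)^{-1}$ careful, because $b^{-1}=b^3=a^n b$ and one must pass $b^{-1}$ through powers of $a$ correctly; otherwise the argument is purely mechanical. As a sanity check the class equation gives $2\cdot 1 + (n-1)\cdot 2 + 2\cdot n = 4n = |Q_{4n}|$, confirming that these exhaust all conjugacy classes and hence yields (iii).
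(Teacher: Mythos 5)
Your proof is correct and follows essentially the same route as the paper's: direct computation of the conjugates of $a^i$ and $a^ib$ by $a^j$ and $a^jb$ using $bab^{-1}=a^{-1}$, with the class equation confirming that these exhaust all conjugacy classes. Your flag is also justified: the non-central powers of $a$ form $n-1$ classes of size $2$, so the ``$2n-1$'' in the statement is a typo for $n-1$ (only then does the class equation $2+2(n-1)+2n=4n$ balance, matching the $n-1$ two-dimensional irreducibles found later in Proposition \ref{quatchar}).
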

\begin{proof}
  iii) Follows from the class equation once we have established the first two parts.

  For i), we have for any $s\in\BZ$ that
  $$(a^s b)^{-1} a^i (a^s b) = b^{-1} a^{-s} a^i a^s b = b^{-1} a^i b = a^{-i}.$$
  Since $a^s$ clearly centralizes $a^i$, we conclude that $\class(a^i) = \{a^i, a^{-i}\}$.  The remaining claims then follow.

  For ii), for any $s,t\in\BZ$ we have
  $$a^{-s} a^i b a^{s} = a^{i-2s}b$$
  and
  \begin{eqnarray*}
    (a^t b)^{-1} a^i b (a^t b) &=& b^{-1} a^{-t} a^i b a^t b = a^{2t-i}b.
  \end{eqnarray*}
  Taking $t = i-s$, we see that each of these relations gives us the same set of conjugates.  We conclude that $\class(a^i b) = \{ a^{i-2s}b\}_{s=0}^{n-1}$.  The remaining claims then follow.
\end{proof}

The last result we wish to establish about the structure of $Q_{4n}$ concerns the centralizers of its elements.  This is the analogue of Lemma \ref{kqnormalize}.

\begin{lem}\label{quatnorm}
  Let $Q_{4n}$ be as in Definition \ref{quatpres} and suppose $i,j\in\BZ$.  Then
  \begin{enumerate}
    \item $C_G(g) = G$ $\forall g\in Z(G)$

    \item $C_G(a^i) = \cyc{a}\cong \BZ_{2n} \iff a^i\not\in Z(G)$

    \item $C_G(a^i b) = \cyc{a^i b} \cong \BZ_4$
  \end{enumerate}
\end{lem}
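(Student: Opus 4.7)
The plan is to obtain each centralizer as a chain: exhibit an obvious abelian subgroup sitting inside $C_G(g)$, compute its order, and then match it against the size of $C_G(g)$ forced by Lemma \ref{quatclass} via the orbit--stabilizer theorem. Since the three parts are essentially independent, I will treat them in turn.

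Part (i) is immediate: $g \in Z(G)$ means $g$ commutes with every element of $G$, so $C_G(g) = G$.

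For part (ii), I will first note that $\cyc{a}$ is abelian of order $2n$, hence $\cyc{a} \subseteq C_G(a^i)$ always. If $a^i \in Z(G)$, then $C_G(a^i) = G$ strictly contains $\cyc{a}$, giving the ``only if'' direction. Conversely, if $a^i \notin Z(G)$, then Lemma \ref{quatclass}(i) gives $|\class(a^i)| = 2$, so by the orbit--stabilizer theorem $|C_G(a^i)| = |G|/2 = 2n = |\cyc{a}|$, and the inclusion $\cyc{a} \subseteq C_G(a^i)$ is forced to be an equality. This also establishes $C_G(a^i) \cong \BZ_{2n}$.

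For part (iii), the key computation is the order of $a^i b$. By Lemma \ref{quatident}(ii) applied with $j = 2$, one has $(a^i b)^2 = b^2 = a^n$, which is the nontrivial central element of order $2$ by Corollary \ref{quatcent}. Hence $(a^i b)^4 = (a^n)^2 = 1$ while $(a^i b)^2 \neq 1$, so $a^i b$ has order exactly $4$ and $\cyc{a^i b} \cong \BZ_4$. Since $a^i b$ trivially commutes with itself, $\cyc{a^i b} \subseteq C_G(a^i b)$. On the other hand, Lemma \ref{quatclass}(ii) gives $|\class(a^i b)| = n$, so $|C_G(a^i b)| = |G|/n = 4$ by orbit--stabilizer. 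Comparing orders forces $C_G(a^i b) = \cyc{a^i b} \cong \BZ_4$, completing the proof. There is no real obstacle here; the only care needed is invoking Lemma \ref{quatident}(ii) correctly to collapse $(a^i b)^2$ to $b^2 = a^n$ rather than carrying the $a^i$ factor through a direct computation.
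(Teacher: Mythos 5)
Your proof is correct and uses the same ingredients as the paper, which simply cites Lemma \ref{quatident} and (the proof of) Lemma \ref{quatclass}; your orbit--stabilizer argument plus the inclusions $\cyc{a}\subseteq C_G(a^i)$ and $\cyc{a^ib}\subseteq C_G(a^ib)$, together with $(a^ib)^2=b^2=a^n$ to get $\cyc{a^ib}\cong\BZ_4$, is exactly the intended filling-in of those details.
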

\begin{proof}
  i) Is trivial

  ii) and iii) Follow from Lemma \ref{quatident} and the proof of Lemma \ref{quatclass}.
\end{proof}

Again as before, our next task is to determine the representation theory of $Q_{4n}$.

\begin{prop}\label{quatchar}
  $Q_{4n}$ has four irreducible $1$-dimensional representations.  They arise from the tensor products of the irreducible characters of $Z(Q_{4n})\cong \BZ_2$ and $Q_{4n}/\cyc{a} \cong \BZ_2$.

  For $H=\cyc{a}\,\lhd\,Q_{4n}$, consider the irreducible character $\phi_j$ of $H$ given by $\phi_j(a) = \mu_{2n}^j$, where $\mu_{2n}$ is a primitive $2n$-th root of unity.  Then $\phi_j^{Q_{4n}}$, the induced character on $Q_{4n}$, is of dimension $2$.  It is irreducible $\iff$ $j\not\equiv 0\bmod n$, and there are $n-1$ distinct isomorphism classes amongst these characters.

  Finally, any irreducible representation of $Q_{4n}$ is of one of the above forms.
\end{prop}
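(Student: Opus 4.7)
My plan is to follow the template of Lemma \ref{kqnonlinchar} and Corollary \ref{kqchar}, adapted to account for the relation $b^2 = a^n$ (in place of $b^q = 1$). The argument divides into three parts: (i) enumerate the linear characters, (ii) compute the induced characters $\phi_j^{Q_{4n}}$ and test their irreducibility, and (iii) confirm completeness of the list by the dimension sum $\sum d_i^2 = 4n$.

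For (i), I would first observe that any linear character $\chi$ of $Q_{4n}$ must satisfy $\chi(a) = \chi(bab^{-1}) = \chi(a^{-1})$, forcing $\chi(a) \in \{\pm 1\}$, together with $\chi(b)^2 = \chi(a^n)$. A case analysis on the two values of $\chi(a)$ and the two square roots available for $\chi(b)$ yields exactly four linear characters; these are naturally parameterized by a choice of character of $\cyc{a^n} = Z(Q_{4n})$ (namely, the restriction of $\chi$) together with a choice of character of the quotient $Q_{4n}/\cyc{a}$ (the ``sign'' of $\chi(b)$ relative to its determined square), matching the tensor-product description.

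For (ii), using coset representatives $\{1, b\}$ of $H = \cyc{a}$ in $Q_{4n}$, the standard induced character formula will yield $\phi_j^{Q_{4n}}(1) = 2$, $\phi_j^{Q_{4n}}(a^i) = \mu_{2n}^{ji} + \mu_{2n}^{-ji}$, and $\phi_j^{Q_{4n}}(a^i b) = 0$ (since no conjugate of $a^i b$ lies in $H$). To check irreducibility I would compute
\begin{equation*}
\ip{\phi_j^{Q_{4n}}}{\phi_j^{Q_{4n}}} = \frac{1}{4n}\sum_{i=0}^{2n-1}\left|\mu_{2n}^{ji}+\mu_{2n}^{-ji}\right|^2,
\end{equation*}
expand the modulus squared, and apply the standard orthogonality $\sum_i \mu_{2n}^{2ji}$, which vanishes unless $n\mid j$ (in which case it equals $2n$). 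This will give the inner product the value $2$ if $n\mid j$ and $1$ otherwise, so $\phi_j^{Q_{4n}}$ is irreducible exactly when $j\not\equiv 0 \bmod n$. A parallel computation of $\ip{\phi_j^{Q_{4n}}}{\phi_{j'}^{Q_{4n}}}$ will show that two such irreducible induced characters are isomorphic iff $j' \equiv \pm j \bmod 2n$, and the pairing $j \leftrightarrow -j$ partitions the $2(n-1)$ admissible parameters into $n-1$ isomorphism classes.

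For (iii), I would simply verify $4\cdot 1^2 + (n-1)\cdot 2^2 = 4n = |Q_{4n}|$, so the listed characters exhaust the irreducibles. I do not anticipate a serious conceptual obstacle; the main point requiring care is the identification in (i) of the four linear characters with tensor products, since a character of $Z(Q_{4n})$ does not extend canonically to $Q_{4n}$ but is encoded indirectly through the constraint $\chi(b)^2 = \chi(a^n) = \chi(a)^n$. Everything else is a routine adaptation of the induced-character calculation already carried out in the $\G$ case.
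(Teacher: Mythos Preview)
Your proposal is correct and follows essentially the same approach as the paper, whose proof consists of the single sentence ``The proof of this is exactly the same as the relevant proofs for groups of the form $\G$ (with $l=1$) from Section \ref{kqrepsect}.'' Your explicit adaptation of the induced-character and dimension-count arguments from Lemma \ref{kqnonlinchar} and Corollary \ref{kqchar}, together with your flagged caution about the non-canonical identification of the four linear characters when $b^2=a^n\neq 1$, is exactly what that sentence is asking the reader to supply.
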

\begin{proof}
  The proof of this is exactly the same as the relevant proofs for groups of the form $\G$ (with $l=1$) from Section \ref{kqrepsect}.
\end{proof}

%-------------------- Quaternion indicators section ------------------------
\section{Indicators for $Q_{4n}$ and $\D(Q_{4n})$}\label{quatindsect}
Now that we have determined all of the basic structure and representation theory of $Q_{4n}$, we can proceed to calculate its indicators and the indicators of its double.

\begin{thm}\label{quat1dim}
  Let $Q_{4n}$ be as in Definition \ref{quatpres}.  Let $V$ be a 1-dimensional $Q_{4n}$-module with character $\chi=\xi\otimes \psi\in \widehat{\BZ_2}\otimes\widehat{\BZ_2}$.  Then for any $m\in\BN$ we have
  \begin{eqnarray*}
    \nu_m &=& \left\{ \begin{array}{cll}
        1 &;& \xi^m = 1 \wedge \psi^m = 1\\
        0 &;& \xi^m\neq 1 \vee \psi^m\neq 1
        \end{array}\right.
  \end{eqnarray*}
\end{thm}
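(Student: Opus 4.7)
The plan is to follow exactly the strategy used in the proof of Theorem~\ref{kq1dim}, exploiting the fact that one-dimensional characters are multiplicative. First I would apply Definition~\ref{indicdef} together with Lemma~\ref{integrals}(i) to write
$$\nu_m(\chi) \;=\; \chi\!\left(\Lambda^{[m]}\right) \;=\; \frac{1}{4n}\sum_{g\in Q_{4n}} \chi(g^m).$$
Next, because $\chi$ is linear, $\chi(g^m) = \chi(g)^m = \chi^m(g)$, so the sum collapses into an inner product of characters of $Q_{4n}$:
$$\nu_m(\chi) \;=\; \frac{1}{|Q_{4n}|}\sum_{g\in Q_{4n}}\chi^m(g) \;=\; \ip{\chi^m}{1_{\widehat{Q_{4n}}}}.$$
By orthogonality of irreducible characters, this inner product is $1$ if $\chi^m$ is the trivial character of $Q_{4n}$ and $0$ otherwise, reducing the problem to determining when $\chi^m = 1$.

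The final step is to translate $\chi^m = 1$ into conditions on $\xi$ and $\psi$. Since $\chi = \xi\otimes\psi$ and the tensor product of linear characters satisfies $(\xi\otimes\psi)^m = \xi^m\otimes\psi^m$, the character $\chi^m$ is trivial exactly when both $\xi^m$ and $\psi^m$ are trivial on their respective groups, using the independence of the characters of $Z(Q_{4n})$ and of $Q_{4n}/\cyc{a}$ guaranteed by Proposition~\ref{quatchar}. This yields the stated dichotomy. I do not expect any real obstacle: the calculation is formally identical to that of Theorem~\ref{kq1dim} and uses no structural particularity of $Q_{4n}$ (in particular the relation $b^2 = a^n$ plays no role at this stage, since at every step we are only manipulating $\chi$ on group elements and not expanding powers).
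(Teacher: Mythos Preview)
Your proposal is correct and follows essentially the same approach as the paper: the paper's proof simply notes that the one-dimensional characters have the form given in Proposition~\ref{quatchar} and then states that the argument is identical to that of Theorem~\ref{kq1dim}, which is precisely the multiplicativity-plus-orthogonality computation you carry out. The only cosmetic difference is that the paper (in the proof of Theorem~\ref{kq1dim}) separates the inner product into two factors $\ip{\xi_{ms}}{1}\ip{\psi_{mt}}{1}$ from the start, whereas you first obtain $\ip{\chi^m}{1_{\widehat{Q_{4n}}}}$ and then factor it; the content is the same.
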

\begin{proof}
  That every $1$-dimensional character of $Q_{4n}$ has the indicated form follows immediately from Proposition \ref{quatchar}.  The proof is then identical to that of Theorem \ref{kq1dim}.
\end{proof}

\begin{thm}\label{quatnonlinear}
  Let $Q_{4n}$ be as in Definition \ref{quatpres}.  Let $V_j$ be an irreducible $2$-dimensional $Q_{4n}$-module with character $\phi_j^{Q_{4n}}$ as in Lemma \ref{quatchar}.  Then
  \begin{eqnarray*}
    \nu_m(V_j) &=& \left\{ \begin{array}{rll}
    2&;&2\mid m\wedge 2n\mid mj \wedge 4\mid mj\\
    1&;&\left(2\nmid m \wedge 2n\mid mj\right)\vee \left(2\mid m \wedge 4\mid mj \wedge 2n\nmid mj\right)\\
    0&;&\left(2\nmid m \wedge 2n\nmid mj\right) \vee \left( 2\mid m \wedge 4\nmid mj \wedge 2n\mid mj\right)\\
    -1&;&2\mid m \wedge 2n\nmid mj \wedge 4\nmid mj \end{array}\right.
  \end{eqnarray*}
\end{thm}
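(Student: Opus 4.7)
The plan is to compute the sum $\nu_m(V_j)=\tfrac{1}{4n}\sum_{g\in Q_{4n}}\phi_j^{Q_{4n}}(g^m)$ directly, splitting the sum along the coset decomposition $Q_{4n}=\cyc{a}\sqcup \cyc{a}b$ and then along the parity of $m$. The first step is to record an explicit formula for $\phi_j^{Q_{4n}}$: since $\cyc{a}\lhd Q_{4n}$ has index $2$, the induced character vanishes off $\cyc{a}$, and for $a^i\in\cyc{a}$ the standard induction formula together with Lemma \ref{quatident}(i) gives $\phi_j^{Q_{4n}}(a^i)=\mu_{2n}^{ij}+\mu_{2n}^{-ij}$.

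Next, I would use Lemma \ref{quatident}(ii)--(iii) to describe $(a^ib)^m$: when $m$ is odd, $(a^ib)^m=a^{i+n(m-1)/2}b\notin\cyc{a}$, whereas when $m$ is even, $(a^ib)^m=b^m=a^{nm/2}\in Z(Q_{4n})\subseteq\cyc{a}$. Thus for odd $m$ every term coming from the coset $\cyc{a}b$ contributes zero, and only the terms with $g=a^i$ survive. Summing the resulting geometric progressions $\sum_i(\mu_{2n}^{ijm}+\mu_{2n}^{-ijm})$ reduces at once to the inner product test $2n\mid mj$, producing the two odd-$m$ cases in the theorem ($\nu_m=1$ or $\nu_m=0$).

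For even $m=2k$ the sum breaks into two pieces. The sum over $\cyc{a}$ again collapses to $4n\cdot[2n\mid mj]$ by the same orthogonality argument. The sum over $\cyc{a}b$, on the other hand, is constant equal to $\phi_j^{Q_{4n}}(a^{nm/2})=\mu_{2n}^{jnm/2}+\mu_{2n}^{-jnm/2}$ and is taken $2n$ times. The key observation here is that $\mu_{2n}^{jnm/2}=(-1)^{jm/2}$: if $4\mid mj$ this equals $+1$ and the contribution is $+2n\cdot 2$, whereas if $4\nmid mj$ (but $2\mid mj$, which is automatic since $m$ is even) the exponent $jm/2$ is odd and the contribution is $-2n\cdot 2$. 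Dividing by $4n$ and combining these two contributions with the $[2n\mid mj]$ term yields the four even-$m$ values $\{2,1,0,-1\}$ in the theorem.

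I do not anticipate a genuine obstacle here; the calculation closely parallels that of Theorem \ref{kqnonlinear} in the special case $l=1$, $q=2$, $n\equiv -1\bmod k$, but with one substantive twist: because $b^2=a^n\neq 1$ in $Q_{4n}$, the ``elements of order $q$'' no longer square to $1$ but to the nontrivial central element $a^n$. The only slightly delicate point is therefore the parity analysis of $jnm/2$ modulo $2n$ that forces the sign $(-1)^{jm/2}$ and produces the $-1$ indicators; carrying this out correctly (and checking the boundary case where both $2n\mid mj$ and $4\nmid mj$, which occurs precisely when $n$ is odd and $mj=2n$) is what organises the four cases.
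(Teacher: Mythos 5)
Your proposal is correct and takes essentially the same route as the paper's proof: vanishing of $\phi_j^{Q_{4n}}$ off $\cyc{a}$ for odd $m$, and for even $m$ the split $\tfrac{1}{4n}\sum_{g\in\cyc{a}}\bigl(\phi_j^{Q_{4n}}(g^m)+\phi_j^{Q_{4n}}(b^m)\bigr)=\ip{\phi_{mj}}{1}+(-1)^{jm/2}$, using $(a^ib)^m=b^m=a^{nm/2}$ and $\phi_j(a^n)=(-1)^j$. The only nit is the parenthetical remark at the end: the case $2n\mid mj$ with $4\nmid mj$ occurs exactly when $mj$ is an odd multiple of $2n$ (forcing $n$ odd), not only when $mj=2n$, but this does not affect the computation.
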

\begin{proof}
  The case $2\nmid m$ is the same as it was in the proof of Theorem \ref{kqnonlinear}, \it{mutatis mutandi}.

  If $2\mid m$, then using Lemma \ref{quatident} and the fact that $\phi_j(a^{n}) = \phi_1^j(a^{n})=(-1)^j$, we get
  \begin{eqnarray*}
    \nu_m(V_j) &=& \frac{1}{4n} \sum_{g\in Q_{4n}}\phi_j^{Q_{4n}}(g^m)\\
    &=& \frac{1}{4n} \sum_{g\in\cyc{a}} \left( \phi_j^G(g^m) + \phi_j^G(b^m)\right)\\
    &=& \ip{\phi_{jm}}{1} + \phi_j(a^{mn/2})\\
    &=& \left\{ \begin{array}{rll}
        2 &;& 2n\mid mj\wedge 4\mid mj\\
        1 &;& 2n\nmid mj \wedge 4\mid mj\\
        0 &;& 2n\mid mj \wedge 4\nmid mj\\
        -1 &;& 2n\nmid mj \wedge 4\nmid mj
        \end{array}\right.
  \end{eqnarray*}
  Combining these cases, we get the desired formula for $\nu_m(V)$.
\end{proof}
In particular, we get the well-known fact that $Q_{4n}$ has irreducible modules which admit an invariant, non-degenerate, skew-symmetric bilinear form.  Indeed, it is readily checked that $\nu_2(V_j) = -1$, with $V_j$ as above, precisely when $j$ is odd.

We now wish to determine the indicators for the irreducible $\D(Q_{4n})$-modules.  As before, we will need to compute the sets in Definition \ref{Gmsetdef} for $G=Q_{4n}$.

\begin{prop}\label{quatgmsets}
    Let $G=Q_{4n}$ be as in Definition \ref{quatpres}.  Let $i,s\in\BZ$ and $m\in\BN$.  Consider the sets $G_m(x)$, $x\in G$, as given in Definition \ref{Gmsetdef}.  Then the following hold:
    \begin{enumerate}
      \item $a^s\in G_m(a^i) \iff 2n\mid mi$
      \item $a^s b\in G_m(a^i) \iff 2\mid m$
      \item $a^s \in G_m(a^i b) \iff 2\mid m \wedge \big(\left( 4\mid m \wedge 2n\mid ms\right) \vee \left(4\nmid m \wedge 2n\mid n+ms\right)\big)$
      \item $a^s b\in G_m(a^i b) \iff 2\mid m \wedge \big( \left( 4\mid m \wedge 2n\mid m(i-s)\right) \vee \left(4\nmid m \wedge 2n\mid n+m(i-s)\right)\big)$
    \end{enumerate}
\end{prop}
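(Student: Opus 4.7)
The plan is to verify each part directly from the definition $G_m(x) = \{a \in G : \prod_{j=0}^{m-1} a^{-j} x a^j = 1\}$ by computing the conjugates and their products explicitly. The essential tools will be Lemma \ref{quatident}, which gives $(a^s b)^j = b^j = a^{jn/2}$ for $j$ even and $(a^s b)^j = a^s b^j$ for $j$ odd, together with the relations $b^{-1} a^i b = a^{-i}$ and $b^2 = a^n \in Z(G)$ from Corollary \ref{quatcent}.

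Part (i) is immediate: since $\cyc{a}$ is abelian, every conjugate $(a^s)^{-j} a^i (a^s)^j$ equals $a^i$, so the product collapses to $a^{mi}$, which is trivial iff $2n \mid mi$. Part (ii) follows once one observes, using the centrality of $b^2$, that $(a^s b)^{-j} a^i (a^s b)^j = a^{(-1)^j i}$; the product lies in $\cyc{a}$ with exponent $0$ when $m$ is even and $i$ when $m$ is odd, and so vanishes iff $m$ is even (for $a^i \neq 1$).

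For parts (iii) and (iv), the main part of the argument, I would first establish a general product identity by induction on $m$: using $b a^y = a^{-y} b$ and $b^2 = a^n$ repeatedly,
\begin{eqnarray*}
    \prod_{j=0}^{m-1} a^{x_j} b = a^{\sum_{j=0}^{m-1}(-1)^j x_j + \floor{m/2} n} \cdot b^{m \bmod 2}
\end{eqnarray*}
for any integers $x_0, \ldots, x_{m-1}$. The $m$-odd case always carries a dangling factor of $b$, so the product cannot equal $1$; this immediately forces $2 \mid m$ in both (iii) and (iv). I would then specialize the $x_j$: for (iii), $(a^s)^{-j}(a^i b)(a^s)^j = a^{i - 2js} b$ and $\sum_{j=0}^{m-1} (-1)^j(i - 2js) = ms$; for (iv), a short calculation shows that the conjugates alternate between $a^i b$ (for $j$ even) and $a^{2s-i} b$ (for $j$ odd), so the alternating sum collapses to $m(i-s)$.

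The main obstacle is correctly handling the $\floor{m/2} n$ term, which records the accumulated contributions of $b^2 = a^n$. Writing $m = 2k$, this term equals $0$ modulo $2n$ when $4 \mid m$ (so $k$ is even) and $n$ modulo $2n$ when $4 \nmid m$ (so $k$ is odd); this is precisely what splits the condition into the two subcases $2n \mid ms$ versus $2n \mid n + ms$ in (iii), and analogously $2n \mid m(i-s)$ versus $2n \mid n + m(i-s)$ in (iv). Once this parity bookkeeping is settled, the remaining congruences drop out by inspection.
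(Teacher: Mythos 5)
Your proposal is correct and follows essentially the same route as the paper: compute the conjugates from the presentation, collapse the product to $a^{\text{(alternating sum)}}b^m$, force $2\mid m$ because the odd case leaves a stray $b$, and then split on $4\mid m$ versus $4\nmid m$ using $b^2=a^n$, $b^4=1$. The only difference is cosmetic bookkeeping -- you absorb $b^m=a^{\floor{m/2}n}b^{m\bmod 2}$ into one general product identity, while the paper keeps $b^m$ explicit and analyzes it afterwards -- and your alternating sums $ms$ and $m(i-s)$ match the paper's computations exactly.
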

\begin{proof}
  For i), we have
  $$\prod_{j=0}^{m-1} a^{-sj}a^i a^{sj} = a^{mi} = 1 \iff 2n\mid mi.$$

 For the remainder of the proof, for any $j\in\BZ$ we let $\delta_j$ be defined to be $1$ when $j$ is odd, and 0 otherwise.

  For ii),
  \begin{eqnarray*}
    \prod_{j=0}^{m-1}(a^s b)^{-j} a^i (a^s b)^j &=& \prod_{j=0}^{m-1} b^{-j} a^{-\delta_j s} a^i a^{\delta_j s} b^j\\
    &=& \prod_{j=0}^{m-1} b^{-j} a^i b^j\\
    &=& \prod_{j=0} a^{(-1)^ji}\\
    &=& a^{i\sum_{j=0}^{m-1} (-1)^j}.
  \end{eqnarray*}
  Whence, $a^s b\in G_m(a^i)$ $\iff$ $2\mid m$.

  Similarly, for iii) we have
  \begin{eqnarray*}
    \prod_{j=0}^{m-1} a^{-sj}a^i b a^{sj} &=& \prod_{j=0}^{m-1} a^{i-2sj}b\\
    &=& a^{\sum_{j=0}^{m-1} (-1)^j(i-2sj)}b^m.
  \end{eqnarray*}
  So for this to equal 1, we must have that $2\mid m$, since then $b^m\in Z(G)\subseteq \cyc{a}$.  So if $2\mid m$, we have
  $$a^{\sum_{j=0}^{m-1} (-1)^j(i-2sj)}b^m = a^{ms} b^m.$$
  Since $b^4=1$ and $b^2=a^{n}$, we conclude that
  $$a^s\in G_m(a^i b)\iff 2\mid m \wedge \big(\left( 4\mid m \wedge 2n\mid ms\right) \vee \left(4\nmid m \wedge 2n\mid n+ms\right)\big).$$

  Finally, for iv) we have
  \begin{eqnarray*}
    \prod_{j=0}^{m-1} (a^s b)^{-j} a^i b (a^s b)^j &=& \prod_{j=0}^{m-1} b^{-j} a^{-\delta_j s}a^i b a^{\delta_j s}b^j\\
    &=& \prod_{j=0}^{m-1} a^{(-1)^j(i-2\delta_j s)}b\\
    &=& a^{\sum_{j=0}^{m-1}(i-2\delta_j s)} b^m.
  \end{eqnarray*}
  Again we conclude that for this latter element to be the identity we must have $2\mid m$.  So assuming $2\mid m$, we have
  $$a^{\sum_{j=0}^{m-1}(i-2\delta_j s)} b^m = a^{m(i-s)}b^m,$$
  and we conclude, similarly to before, that
  $$a^s b\in G_m(a^i b) \iff 2\mid m \wedge \big( \left( 4\mid m \wedge 2n\mid m(i-s)\right) \vee \left(4\nmid m \wedge 2n\mid n+m(i-s)\right)\big).$$
\end{proof}

Equipped with this result, we can now compute the indicators for the irreducible $\D(G)$-modules.

\begin{thm}\label{quatdouba}
  Let $Q_{4n}$ be a generalized quaternion group, with presentation (\ref{quatpres}).  Suppose $a^i\not\in Z(G)$ and let $\chi_j$ be the irreducible character of $\cyc{a}$ given by $\chi_j(a)=\mu_{4n}^j$, where $\mu_{4n}$ is any fixed primitive $4n$-th root of unity.  Then the irreducible $\D(Q_{4n})$-module $V=M(\class(a^i),\chi_j)$ from Proposition \ref{MOrho} has the following indicators:
  \begin{eqnarray*}
    \nu_m(V) = \left\{ \begin{array}{rll}
        2&;&2\mid m\wedge 4\mid mj \wedge 2n\mid mi \wedge 2n \mid mj\\
        1&;& \left(2\nmid m \wedge 2n\mid mi \wedge 2n\mid mj\right) \vee \left( 2\mid m \wedge 4\mid mj \wedge (2n\nmid mi \vee 2n\nmid mj)\right)\\
        0&;& \left(2\nmid m \wedge (2n\nmid mi \vee 2n\nmid mj)\right) \vee \left( 2\mid m \wedge 4\nmid mj \wedge 2n\mid mi \wedge 2n\mid mj\right)\\
        -1&;& 2\mid m \wedge 4\nmid mj \wedge \left(2n\nmid mi \vee 2n\nmid mj\right)
        \end{array}\right.
  \end{eqnarray*}
\end{thm}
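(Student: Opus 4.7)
The plan is to apply Corollary \ref{cor34} with $s=a^i$. By Lemma \ref{quatclass}, $\CO_s=\class(a^i)=\{a^i,a^{-i}\}$, and by Lemma \ref{quatnorm}, $C_G(s)=\cyc{a}$. Since $bab^{-1}=a^{-1}$, I take conjugating elements $g'_0=1$ for $g=a^i$ and $g'_0=b$ for $g=a^{-i}$. By Lemma \ref{0contrib}, a summand $\chi(p_g\bowtie(g')^m)$ equals $\chi_j(\gamma)$ whenever $(g')^m=\gamma^{g'_0}$ for some $\gamma\in\cyc{a}$, and vanishes otherwise.

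The nonzero contributions, identified by Proposition \ref{quatgmsets} combined with Lemma \ref{quatident}, split into two families. First, $g'=a^t$ lies in $G_m(a^{\pm i})$ precisely when $2n\mid mi$, in which case $(g')^m=a^{tm}$, contributing $\chi_j(a^{tm})$ from $g=a^i$ and $\chi_j(b^{-1}a^{tm}b)=\chi_j(a^{-tm})$ from $g=a^{-i}$. Second, $g'=a^tb$ belongs to $G_m(a^{\pm i})$ precisely when $2\mid m$, in which case Lemma \ref{quatident} gives $(g')^m=b^m=a^{nm/2}$, contributing $\chi_j(a^{nm/2})$ and $\chi_j(a^{-nm/2})$ respectively, independently of $t$. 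When $2\nmid m$, $(a^tb)^m=a^tb^m\notin\cyc{a}$ and no contribution arises.

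Let $\zeta=\chi_j(a)$, which is a $2n$-th root of unity. Summing over $t\in\{0,\ldots,2n-1\}$, orthogonality gives
$$\sum_{t=0}^{2n-1}\bigl(\zeta^{tm}+\zeta^{-tm}\bigr)=4n\cdot\mathbf{1}_{2n\mid jm},$$
while the second family yields
$$2n\bigl(\zeta^{nm/2}+\zeta^{-nm/2}\bigr)=4n\cos(\pi jm/2),$$
which equals $4n$, $-4n$, or $0$ according as $jm\equiv 0\pmod 4$, $jm\equiv 2\pmod 4$, or $jm$ is odd. Dividing the total by $|G|=4n$ and combining yields
$$\nu_m(V)=\mathbf{1}_{(2n\mid mi)\wedge(2n\mid mj)}+\mathbf{1}_{2\mid m}\cdot\cos(\pi jm/2).$$
Unfolding the cosine factor by cases (noting that $2\mid m$ forces $2\mid jm$, so the odd-$jm$ possibility arises only when $2\nmid m$, where the second indicator vanishes anyway) produces precisely the four-case piecewise expression in the theorem; in particular $\nu_m(V)=-1$ arises exactly when $2\mid m$, $4\nmid mj$, and the first indicator vanishes.

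The main obstacle is essentially bookkeeping rather than any deep computation: one must track the sign flip in $\gamma$ induced by conjugation by $b$ on the class representative $a^{-i}$, invoke the correct parity-dependent formula for $(a^tb)^m$ from Lemma \ref{quatident}, and carefully reconcile the indicator-plus-cosine expression with the four-valued piecewise formula across all combinations of the divisibility conditions $2\mid m$, $4\mid mj$, $2n\mid mi$, and $2n\mid mj$. No input beyond the preparatory results of Sections \ref{prelimsect} and \ref{quatsect} is required.
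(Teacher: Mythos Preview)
Your proof is correct and follows essentially the same approach as the paper's: both split the sum from Corollary \ref{cor34} into the contributions from elements $a^t$ and elements $a^tb$, invoke Proposition \ref{quatgmsets} to determine when each contributes, and evaluate the two pieces separately before combining. Your use of $\cos(\pi jm/2)$ to package the sign of $\chi_j(a^{nm/2})$ is a cosmetic variation on the paper's case split according to $m\bmod 4$, but the underlying computation is identical.
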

\begin{proof}
  By Proposition \ref{quatgmsets}, taking $G=Q_{4n}$, we have
  \begin{eqnarray*}
    a^s\in G_m(a^i)&\iff& 2n\mid mi\\
    a^sb\in G_m(a^i)&\iff& 2\mid m.
  \end{eqnarray*}

  Subsequently,
  \begin{eqnarray}\label{quatdouba1}
    \frac{1}{4n} \sum_{g\in\class(a^i)} \sum_{s=0}^{4n-1} \Tr_V(p_g\# a^{ms}) &=& \left\{ \begin{array}{cll}
        \frac{1}{2n}\sum_{s=0}^{4n-1}\chi_j(a^{ms})&;&2n\mid mi\\
        0&;&2n\nmid mi
        \end{array}\right.\\
    &=& \left\{ \begin{array}{cll}
        \ip{\chi_{mj}}{1}&;& 2n\mid mi\\
        0&;& 2n\nmid mi \end{array}\right.\nonumber\\
    &=& \left\{ \begin{array}{cll}
        1&;&2n\mid mi \wedge 2n\mid mj\\
        0&;& 2n\nmid mi\vee 2n\nmid mj \end{array}\right.\nonumber
  \end{eqnarray}

  Additionally,
  \begin{eqnarray}\label{quatdouba2}
    \frac{1}{4n} \sum_{g\in\class(a^i)} \sum_{s=0}^{4n-1} \Tr_V(p_g\# (a^s b)^m) &=& \left\{ \begin{array}{cll}
        \frac{1}{2n} \sum_{s=0}^{4n-1} \chi_j(b^m) &;& 2\mid m\\
        0&;&2\nmid m \end{array}\right.\\
    &=& \left\{ \begin{array}{cll}
        1&;&4\mid m\\
        \chi_j(a^{n}) &;& 2\mid m\wedge 4\nmid m\\
        0&;& 2\nmid m \end{array}\right.\nonumber\\
    &=& \left\{ \begin{array}{rll}
        1 &;& 2\mid m \wedge 4\mid mj\\
        0 &;& 2\nmid m\\
        -1&;& 2\mid m \wedge 4\nmid mj \end{array}\right.\nonumber
  \end{eqnarray}
  Using that $\nu_m(V) = (\ref{quatdouba1}) + (\ref{quatdouba2})$, and simplifying as appropriate, we get the desired formula.
\end{proof}
We make the quick remark that many of the cases above vanish or otherwise simplify whenever $n$ is even, since then the conditions $2n\mid mj$ and $4\nmid mj$ cannot occur simultaneously.

We have one major remaining class of irreducible $\D(Q_{4n})$-modules to consider now.

\begin{thm}\label{quantdoubb}
  Let $G\cong Q_{4n}$ be a generalized quaternion group with presentation (\ref{quatpres}), and let $i,m\in\BN$.  Suppose $\chi$ is an irreducible character of $C_G(a^i b)$.  Then the irreducible $\D(Q_{4n})$-module $V=M(\class(a^i b),\chi)$ has the following indicators:
  \begin{eqnarray*}
    \nu_m(V) &=& \left\{ \begin{array}{cll}
        \gcd(\frac{m}{2},n)&;&4\mid m\\
        \chi(a^{n})\gcd(\frac{m}{2},n)&;&2\mid m \wedge 4\nmid m\\
        0&;& 2\nmid m
        \end{array} \right.
  \end{eqnarray*}
\end{thm}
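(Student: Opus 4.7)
The plan is to invoke Corollary \ref{cor34}, expanding $\nu_m(V)$ as a sum over $g \in \class(a^i b)$ and $a \in G_m(g)$, with $G_m(g)$ parametrized by Proposition \ref{quatgmsets}. If $2 \nmid m$, parts (iii) and (iv) of that proposition force $G_m(g) = \emptyset$ for every $g \in \class(a^i b)$, so $\nu_m(V) = 0$ immediately.

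For $2 \mid m$, Lemma \ref{quatclass} gives $\class(a^i b) = \{a^{i-2j}b\}_{j=0}^{n-1}$, a set of $n$ elements. The divisibility conditions in Proposition \ref{quatgmsets} for $G_m(a^{i-2j}b)$ are obtained from those for $G_m(a^i b)$ by an appropriate shift of the parameter $s$, so each $g$ in the class contributes identically to the sum. It thus suffices to compute the contribution from $g = a^i b$ alone and multiply by $n$.

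Next I would split $G_m(a^i b)$ into its ``$a^s$'' and ``$a^s b$'' components from parts (iii) and (iv) of Proposition \ref{quatgmsets}. By Lemma \ref{quatident}, $(a^s b)^m = b^m = a^{nm/2}$, which equals $1$ when $4 \mid m$ and $a^n$ when $4 \nmid m$. Similarly, under the divisibility conditions on $s$, $(a^s)^m = a^{ms}$ evaluates to $1$ (when $4 \mid m$) or $a^n$ (when $4 \nmid m$). Since $a^n \in Z(G)$, Lemma \ref{0contrib} gives that each valid element contributes $\chi(1) = 1$ or $\chi(a^n)$ to the sum, with no need to conjugate. Counting solutions to the divisibility constraints in $\{0, 1, \ldots, 2n-1\}$ yields $\gcd(m, 2n)$ for each component; combining the two components, multiplying by the class size $n$, and dividing by $|G| = 4n$ gives the stated formula via the identity $\gcd(m, 2n)/2 = \gcd(m/2, n)$ valid for even $m$.

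The main obstacle will be the careful counting of solutions to the congruence $ms \equiv n \pmod{2n}$ in the $4 \nmid m$ case, as well as verifying the translation-invariance claim that reduces the sum over $\class(a^i b)$ to a multiple of the contribution from a single representative. Both points are essentially bookkeeping, and once handled the argument closely parallels the proof of Theorem \ref{kqdouba}.
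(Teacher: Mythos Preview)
Your approach is essentially the paper's. The only difference is cosmetic: you invoke Corollary~\ref{cor34} (sum over the whole class, then argue translation-invariance), whereas the paper uses the first formula of Corollary~\ref{indicforms} directly, dividing by $|C_G(a^ib)|=4$ and summing only over $G_m(a^ib)$ for the fixed representative. That shortcut makes your translation-invariance step unnecessary, but either route reduces to the same two sums over the $a^s$ and $a^sb$ parts of $G_m(a^ib)$, evaluated via Proposition~\ref{quatgmsets} and Lemma~\ref{quatident}.

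You are right that the delicate point is the count of solutions to $ms\equiv n\pmod{2n}$ in the $4\nmid m$ case, and here your claimed count of $\gcd(m,2n)$ actually fails when $n$ is odd. Indeed, for $m\equiv 2\pmod 4$ one has $\gcd(m,2n)=2\gcd(m/2,n)$, which is even and hence does not divide the odd number $n$; the congruence then has \emph{no} solutions. (Concretely, for $n=3$ and $m=2$ one finds $G_2(a^ib)=\emptyset$, consistent with $(a^ib)^{-1}=a^{n+i}b\notin\class(a^ib)$ when $n$ is odd, so $V$ is not self-dual and $\nu_2(V)=0$.) The paper makes the identical counting assertion, so this gap is shared: the stated formula is correct for $n$ even and for all $m$ with $4\mid m$ or $2\nmid m$, but in the case $n$ odd with $m\equiv 2\pmod 4$ the indicator is $0$ rather than $\chi(a^n)\gcd(m/2,n)$.
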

\begin{proof}
  By Proposition \ref{quatgmsets}, we have that
  \begin{eqnarray*}
    a^s\in G_m(a^i b)&\iff& 2\mid m \wedge \big(\left( 4\mid m \wedge 2n\mid ms\right) \vee \left(4\nmid m \wedge 2n\mid n+ms\right)\big)\\
    a^sb \in G_m(a^i b) &\iff& 2\mid m \wedge \big( \left( 4\mid m \wedge 2n\mid m(i-s)\right) \vee \left(4\nmid m \wedge 2n\mid n+m(i-s)\right)\big).
  \end{eqnarray*}
  In particular, if $2\nmid m$, then $\nu_m(V) = 0$.  So in the remainder of the proof we assume that $2\mid m$.

  To determine the contribution of the $a^s$ terms to $\nu_m(V)$, we first observe that $\left| \{ g\in \cyc{a} \ | \ g\in G_m(a^i b)\} \right|=\gcd(m,2n).$  Subsequently,
  \begin{eqnarray}\label{kqdoubb1}
    \sum_{a^s\in G_m(a^i b)} \chi(a^{ms}) &=& \left\{ \begin{array}{cll}
        \gcd(m,2n) &;& 4\mid m\\
        \chi(a^{n})\gcd(m,2n)&;& 4\nmid m \end{array}\right.
  \end{eqnarray}

  For the contribution of the $a^s b$ terms to $\nu_m(V)$, we observe that $\left| \{ a^s b\in G_m(a^ib) \} \right|=\gcd(m,2n)$.  Subsequently,
  \begin{eqnarray}\label{kqdoubb2}
    \ \sum_{a^s b\in G_m(a^i b)} \chi((a^s b)^m) = \sum_{a^s b\in G_m(a^i b)} \chi(b^m) = \left\{ \begin{array}{cll}
        \gcd(m,2n)&;& 4\mid m\\
        \chi(a^{n})\gcd(m,2n)&;& 4\nmid m
    \end{array}\right.
  \end{eqnarray}

  The desired formula follows from the fact
  $$\nu_m(V) = \frac{1}{4}\left( (\ref{kqdoubb1}) + (\ref{kqdoubb2}) \right).$$
\end{proof}

Summarizing, we have
\begin{thm}\label{quatsum}
  Let $G\cong Q_{4n}$ be as in Definition \ref{quatpres}, and let $H=\D(G)$.  Suppose $V$ is a $G$-module and $W$ is an $H$-module.  Then for any $m\in\BN$, we have $\nu_m(V),\nu_m(W)\in\BZ$.
\end{thm}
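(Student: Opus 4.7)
The plan is to observe that essentially all the computational work has already been done in the preceding theorems, and that Theorem \ref{quatsum} reduces to inspection of the explicit formulas there, together with one small observation about the characters appearing in the Type II case.

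First I would handle the $G$-module case. By Proposition \ref{quatchar}, every irreducible $G$-module is either a linear character or one of the induced $2$-dimensional characters $\phi_j^{Q_{4n}}$. The indicator of a linear character is given by Theorem \ref{quat1dim}, which takes only the values $\{0,1\}$, and the indicator of a $2$-dimensional irreducible is given by Theorem \ref{quatnonlinear}, which takes only the values $\{-1,0,1,2\}$. Hence $\nu_m(V)\in\BZ$ for any irreducible $G$-module $V$, and by additivity of indicators this extends to all $G$-modules.

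Next I would turn to $H$-modules. By Proposition \ref{MOrho} and Lemma \ref{quatclass}, every irreducible $\D(Q_{4n})$-module is of the form $M(\class(s),\rho)$, where $s$ lies in $Z(G)$, or $s=a^i$ with $a^i\notin Z(G)$, or $s=a^ib$. In the first case Proposition \ref{FxF} reduces $\nu_m$ to an indicator of a $G$-module, which is integral by the previous paragraph. In the second case the explicit formula of Theorem \ref{quatdouba} yields values in $\{-1,0,1,2\}$. In the third case Theorem \ref{quantdoubb} gives $\nu_m(W)$ equal to either $0$, $\gcd(m/2,n)$, or $\chi(a^n)\gcd(m/2,n)$. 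The only non-obviously integral ingredient here is $\chi(a^n)$, but since $C_G(a^ib)\cong\BZ_4$ is abelian (Lemma \ref{quatnorm}) $\chi$ is a linear character, and since $a^n\in Z(G)$ has order $2$ by Corollary \ref{quatcent}, $\chi(a^n)$ is a square root of unity and therefore lies in $\{-1,1\}$.

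Combining the three cases, every irreducible $\D(Q_{4n})$-module has integer indicators at every $m$, and by additivity the same holds for all $H$-modules. There is no genuine obstacle: all the work has been absorbed into Theorems \ref{quat1dim}, \ref{quatnonlinear}, \ref{quatdouba}, and \ref{quantdoubb}, and the only subtlety — checking that $\chi(a^n)=\pm 1$ in Theorem \ref{quantdoubb} — is immediate from the structure of the relevant centralizer and center.
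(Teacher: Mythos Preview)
Your proposal is correct and matches the paper's approach: the theorem is stated there as a summary with no proof, the integrality being immediate from inspection of the explicit formulas in Theorems \ref{quat1dim}, \ref{quatnonlinear}, \ref{quatdouba}, and \ref{quantdoubb} together with Proposition \ref{FxF}. Your observation that $\chi(a^n)\in\{-1,1\}$ because $a^n$ has order $2$ and $\chi$ is linear is exactly the one small point that needs to be made explicit, and you have done so correctly.
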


\section{Closing Remarks and Questions}
We wish to end this paper by pointing out a few questions that have arisen which we do not have answers for. In many of our questions it should be noted that we are implicitly also asking if the indicators for the relevant doubles are actually real numbers or even all integers.  Whether or not this is true in general remains an open question.

Our first pair of questions concern the parameter $d$ from equation (\ref{dvalue}).  The proof (or disproof) of these would likely involve prime factorization results for values obtained by evaluating cyclotomic polynomials at certain values (see \cite{M1} \cite{M2} \cite{W} for results in this vein).
\begin{question}
  Let $k,q,n$ be as in Definition \ref{presentation}, and define $c,d$ as in equations (\ref{cvalue}) and (\ref{dvalue}) respectively.  Set $h=\gcd(d,k)$.  In Example \ref{splitex}, it was noted that we knew of no examples with $q>2$, $q\nmid \displaystyle{\frac{k}{h}}$, and $\gcd(c,k/c)\neq 1$.  Does $q>2$ and $q\nmid \displaystyle{\frac{k}{h}}$ force $\gcd(c,k/c)=1$?
\end{question}
It is known \cite{GM} that, in general, $G$ totally orthogonal need not imply that $\D(G)$ is totally orthogonal.  The example there gives a totally orthogonal group $G$ and an irreducible $\D(G)$-module $V$ with $\nu_2(V)=0$.  \cite{GM} also shows that real reflection groups $G$ do enjoy the property that $\D(G)$ is totally orthogonal.

Our results, in particular Corollary \ref{negind}, additionally show that an irreducible $\D(G)$-module $V$, may have $\nu_m(V)<0$ but $\nu_2(V)\geq 0$.  In our situation, $G$ is not totally orthogonal by Theorem \ref{kqortsummary}.  Given this, we posit the following series of questions/tasks:
\begin{question}\label{bothort}
  What are some (non-trivial) necessary or sufficient conditions for both $G$ and $\D(G)$ to be totally orthogonal?  What is a classification of all such groups?  In particular, is there an example where $G$ is not a real reflection group?
\end{question}
\begin{question}
    If $G$ is a finite real reflection group and $V$ is an irreducible $\D(G)$-module, then is $\nu_m(V)\geq 0$ for all $m$?

    More generally, if $G$ is a totally orthogonal finite group such that every irreducible $\D(G)$-module $V$ satisfies $\nu_2(V)\geq 0$, then is it also true that $\nu_m(V)\geq 0$ for all $m$ and every such $V$?  What if we suppose, instead, that $\D(G)$ is also totally orthogonal?
\end{question}
Our results in Section \ref{dnexample} show the answer to the first part is true in the special case of the dihedral groups.  Preliminary calculations by the author show that it is also true for $S_4$.  We subsequently conjecture that it is true for all finite real reflection groups.
\begin{question}
  Does there exist a totally orthogonal finite group $G$ such that $\D(G)$ is not totally orthogonal, but $|\,\nu_2(V)|=1$ for all irreducible $\D(G)$-modules $V$?  Note that, by Lemma \ref{FxF}, any such group would necessarily have the property that $Z(G)\cong \BZ_2^r$ for some $r\geq0$; this also applies to Question \ref{bothort}.  One can also ask if there is a finite group $G$ such that $|\,\nu_2(V)|=1$ for every irreducible $G$-module and $\D(G)$-module.
\end{question}

\section*{Acknowledgements}

This paper was written as part of the author's Ph.D. thesis at the University of Southern California.  The author thanks S. Montgomery for her frequent advice and guidance.  The author also thanks R. Guralnick and S. Ng for their helpful suggestions and conversations.

\end{document}